\newcommand{\xa}{x_\alpha}
\newcommand{\xb}{x_\beta}
\newcommand{\xab}{x_{\alpha+\beta}}
\newcommand{\xaa}{x_{2\alpha+\beta}}
\newtheorem*{theorem*}{Theorem 1}
\newtheorem*{theorem**}{Theorem 2}
\newtheorem{defn}{Definition}[section]
\newtheorem{theorem}[defn]{Theorem}
\newtheorem{remark}[defn]{Remark}
\newtheorem{lemma}[defn]{Lemma}
\newtheorem{cor}[defn]{Corollary}
\newtheorem{claim}[defn]{Claim}
\newtheorem{prop}[defn]{Proposition}
\title{Presentations: from Kac-Moody  groups to profinite  and back
}
\author{Inna Capdeboscq, Alexander Lubotzky and Bertrand R{\'e}my }
\date{\today}
\begin{document}
\maketitle

\vspace{5cm}

\hrule

\vspace{0.5cm}

{\small
\noindent
{\bf Abstract:}
We go back and forth between, on the one hand, presentations of arithmetic and Kac-Moody groups and, on the other hand, presentations of profinite groups, deducing along the way new results on both.
\vspace{0,2cm}

\noindent
{\bf Keywords:}
arithmetic and profinite groups, bounded presentation, low-dimensional cohomology.

\vspace{0,1cm}

\noindent
{\bf AMS classification (2000):}
11F06,
17B22,
20E32,
20F20, 
20G44,
20H05,
22E40, 
51E24.
}

\vspace{0,5cm}
\hrule
\newpage


\section*{Introduction}

In recent years several papers have been dedicated to showing that finite simple groups or arithmetic groups have presentations with bounded number  of generators and relations \cite{kn::KL}, \cite{kn::GKKL1}, \cite{kn::GKKL2}, \cite{kn::GKKL3}, \cite{kn::Cap}.
This paper is about such quantitative results for some classes of profinite groups.

More precisely, we will give essentially optimal bounds on the size of the 
presentations of maximal compact subgroups and of maximal pro-$p$ subgroups of simple Chevalley groups over local fields.
These results can be also expressed as bounds on 2-cohomology groups -- see below. 

Let ${\bf G}$ be a simple, simply connected, Chevalley group scheme of Lie rank $l$ and let $q$ be a power $p^a$ for some prime $p$ and some exponent $a \geqslant 1$.
The non-Archimedean Lie group ${\bf G}\bigl( \mathbf{F}_q (\!( t )\!) \bigr)$ acts simplicially on its Bruhat-Tits building \cite{TitsCorvallis}.
In this action, each facet stabilizer is an extension of a finite group of Lie type by a pro-$p$ group. 
Combining this with a non-positive curvature argument implies that maximal pro-$p$ subgroups in ${\bf G}\bigl( \mathbf{F}_q (\!( t )\!) \bigr)$ are all conjugate to one another \cite[1.C.2]{RemyGAFA} (while this is not the case for maximal compact subgroups); we henceforth call {\it pro-$p$ Sylow subgroup}~a maximal pro-$p$ subgroup of ${\bf G}\bigl( \mathbf{F}_q (\!( t )\!) \bigr)$.
If $P$ is such a subgroup then, up to conjugating it, we can assume that we have $P < {\bf G}(\mathbf{F}_q[[t]]) < {\bf G}\bigl(\mathbf{F}_q(\!(t)\!)\bigr)$.
The group ${\bf G}(\mathbf{F}_q[[t]])$ is a {\it special}~maximal compact subgroup.

\begin{theorem}
\label{th::presentation_max_compact}
There exists a constant $C>0$ such that for any simple, simply connected, Chevalley group ${\bf G}$ of rank $\geqslant 2$ and for any prime power $q=p^a\geqslant 4$, the group $G={\bf G}(\mathbf{F}_q[[t]])$ admits a profinite presentation  $\Sigma(G)$, with $D_{\Sigma(G)}$ generators and $R_{\Sigma(G)}$ relations, satisfying 

\smallskip 

\centerline{$D_{\Sigma(G)}+R_{\Sigma(G)} \leqslant C$.}
\end{theorem}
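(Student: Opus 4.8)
The plan is to produce an explicit profinite presentation of $G=\mathbf{G}(\mathbf{F}_q[[t]])$ from a Steinberg-type presentation of the simply connected Chevalley group over the complete local ring $\mathbf{F}_q[[t]]$, arranging that the number of generators and of relations depends only on the root system of $\mathbf{G}$ (which is fixed) and \emph{not} on $q$. The conceptual backdrop is the cohomological criterion for quantitative profinite presentability (a theorem of Lubotzky): a finitely generated profinite group $H$ admits a presentation with $D+R$ bounded by a constant precisely when $d(H)$ is bounded and there is a constant $c$ with $\dim_{\mathbf{F}_\ell}H^2(H,M)\leqslant c\,\dim_{\mathbf{F}_\ell}M$ for all primes $\ell$ and all irreducible $\mathbf{F}_\ell[H]$-modules $M$. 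I would not compute these groups head-on, but they explain why the bound obtained is essentially optimal, and they enter through the finite quotient $\mathbf{G}(\mathbf{F}_q)$, which is boundedly presented (and hence has such a $H^2$-bound, uniformly in $\ell$) by \cite{kn::GKKL1}.

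First I would record the structure. Let $G_1=\ker\bigl(\mathbf{G}(\mathbf{F}_q[[t]])\to\mathbf{G}(\mathbf{F}_q)\bigr)$ be the first congruence subgroup; it is a pro-$p$ group, normal in $G$, with $G/G_1\cong\mathbf{G}(\mathbf{F}_q)$, which for $l\geqslant 2$ and $q\geqslant 4$ is quasisimple. Since $\mathbf{G}(\mathbf{F}_q)$ is $2$-generated \cite{kn::GKKL1} and $G_1$ is topologically generated, as a normal subgroup of $G$, by a single root element $x_\alpha(t)$ (its $\mathbf{G}(\mathbf{F}_q)$-conjugates together with the Chevalley commutator relations and the torus relations produce all $x_\beta(t^k r)$ with $r\in\mathbf{F}_q$, hence all of $G_1$), one gets that $d(G)$ is bounded, indeed $d(G)\leqslant 3$. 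For $\ell\neq p$ one has $H^{>0}(G_1,M)=0$, so the Lyndon--Hochschild--Serre spectral sequence gives $H^2(G,M)=H^2(\mathbf{G}(\mathbf{F}_q),M^{G_1})$, which is $\leqslant c\dim M$ by the bounded presentation of $\mathbf{G}(\mathbf{F}_q)$. So the real issue is the defining characteristic.

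Rather than bounding $H^j(G_1,M)$ for $\ell=p$ (these genuinely grow with $q$), I would bypass this by writing down the presentation directly. The ingredients are: (i) as a \emph{complete topological ring}, $\mathbf{F}_q[[t]]$ is generated by the two elements $\zeta$ (a generator of $\mathbf{F}_q^\times$, so $\mathbf{F}_q=\mathbf{F}_p[\zeta]$) and $t$; (ii) by Steinberg's presentation theorem for simply connected Chevalley groups of rank $\geqslant 2$ over a commutative local ring, $\mathbf{G}(\mathbf{F}_q[[t]])$ is presented by the root elements $x_\alpha(r)$, $\alpha\in\Phi$, $r\in\mathbf{F}_q[[t]]$, subject to additivity in $r$, the Chevalley commutator formula, and the standard torus and Weyl-element relations; (iii) additivity makes each $x_\alpha$ a continuous homomorphism from $(\mathbf{F}_q[[t]],+)$ and the commutator formula is polynomial in $r$, so every generator and every relation is a \emph{topological} consequence of the finitely many of them involving only $r\in\{1,\zeta,t\}$; the one point where $q$ intervenes, the minimal polynomial of $\zeta$ over $\mathbf{F}_p$, contributes a single relation, of unbounded \emph{length} but contributing only $1$ to the count $R_{\Sigma(G)}$. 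Assembling this and passing to the inverse limit over $\mathbf{G}(\mathbf{F}_q[t]/t^n)$ yields a profinite presentation $\Sigma(G)$ with $D_{\Sigma(G)}+R_{\Sigma(G)}$ bounded in terms of $l$ alone.

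The main obstacle is step (iii) together with the effective, $q$-uniform form of (ii): one must verify that the finitely many ``generic'' Steinberg relations among $x_\alpha(1),x_\alpha(\zeta),x_\alpha(t)$ force all the others \emph{profinitely} and compatibly with the congruence filtration — which requires controlling the discrepancy between the Steinberg group and $\mathbf{G}(\mathbf{F}_q[[t]])$ (the relevant $K_2$ of the local ring $\mathbf{F}_q[[t]]$) and handling the small-rank and small-field exceptions, exactly where the hypotheses $l\geqslant 2$ and $q\geqslant 4$ are used, and where it matters that $\mathbf{G}(\mathbf{F}_q[[t]])$ is the \emph{special} maximal compact so that its finite quotient is the full group $\mathbf{G}(\mathbf{F}_q)$. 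This is precisely the ``positive half'' of an affine Kac--Moody presentation, which is the mechanism linking this statement to the rest of the paper.
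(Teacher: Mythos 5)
Your route --- writing down a profinite Steinberg presentation of $\mathbf{G}(\mathbf{F}_q[[t]])$ directly over the complete local ring, with ring parameters restricted to $\{1,\zeta,t\}$ --- is genuinely different from the paper's, which never presents the local group directly: the paper combines Capdeboscq's bounded \emph{discrete} presentation of the affine Kac--Moody group $\mathbf{G}(\mathbf{F}_q[t,t^{-1}])$ with the congruence subgroup property, so that $\widehat{\mathbf{G}(\mathbf{F}_q[t,t^{-1}])}\cong \mathbf{G}(\mathbf{F}_{q}[[t]])\times M$ with $M$ normally generated by one element, and the desired profinite presentation costs exactly one extra relation. The problem is that your sketch leaves unproved precisely the step that carries all the difficulty. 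That the finitely many Steinberg relations with parameters in $\{1,\zeta,t\}$ \emph{profinitely} imply all the others, and that the resulting central kernel can be killed by boundedly many further relations uniformly in $q$, is not a routine verification: the algebraic group ${\rm K}_2(\Phi,\mathbf{F}_q[[t]])$ is not even finitely generated (the relative ${\rm K}_2$ of $(\mathbf{F}_q[[t]],(t))$ in characteristic $p$ is huge), so one is forced to set up a continuous Steinberg group and control its continuous ${\rm K}_2$ and its compatibility with the congruence filtration --- and no argument or reference is given for this. Labelling it ``the main obstacle'' does not discharge it; as it stands the proposal reduces the theorem to an unproved statement of comparable depth to the results the paper actually invokes.

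There is a second, independent gap: even granting the local Steinberg argument in full, the presentation you describe has one generator $x_\alpha(r)$ for each root $\alpha\in\Phi$ and each $r\in\{1,\zeta,t\}$, and commutator relations for each pair of roots, hence $O(l^2)$ generators and $O(l^4)$ relations. The theorem asserts a single absolute constant $C$ valid for \emph{all} simple simply connected $\mathbf{G}$ of rank $\geqslant 2$, i.e.\ a bound independent of $l$ as well as of $q$; your parenthetical ``the root system of $\mathbf{G}$ (which is fixed)'' silently weakens the statement to $C=C(l)$. Removing the rank dependence is exactly where the Curtis--Tits amalgamation and the bounded presentations of ${\rm Sym}(n)$ and of finite groups of Lie type from \cite{kn::GKKL1}, \cite{kn::GKKL3} enter (all of this is packaged in Theorem~\ref{theorem::C}), and your outline does not address it at all. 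Your preliminary observations --- $d(G)\leqslant 3$ via normal generation of the first congruence subgroup by a root element, and the vanishing of ${\rm H}^{>0}(G_1,M)$ for $\ell\neq p$ --- are correct but do not touch either gap.
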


For a group $X$, let $d(X)$ (respectively $r(X)$) denote the minimal number of generators (respectively relations) of $X$ in  any of its profinite or discrete presentation (depending on whether $X$ is  a profinite or a discrete group).
Then  Theorem \ref{th::presentation_max_compact} implies that in particular $$d(G)+r(G)\leqslant C.$$ 

Theorem \ref{th::presentation_max_compact} is proved by combining the results of \cite{kn::Cap} on bounded presentations of the discrete Kay-Moody group ${\bf G}(\mathbf{F}_q[t,t^{-1}])$, with the fact that it has the congruence subgroup property. 
It is important at this point to note that the above uniformness statement is {\it not}~true globally, that is for abstract presentations of arithmetic groups obtained by replacing the local rings $\mathbf{F}_q[[t]]$ by rings of integers of global fields (see  \S3 below). 

Now, $P$  as above is a subgroup of index $q^{O(l^2)}$ of ${\bf G}(\mathbf{F}_q[[t]])$, from which one deduces, using the Reidemeister-Schreier theorem, that $P$ has a presentation $\Sigma(P)$ satisfying
$$D_{\Sigma(P)}+R_{\Sigma(P)} \leqslant q^{O(l^2)}.$$ 

\smallskip

By different methods, we obtain a presentation of the pro-$p$ group $P$ with much better bounds. 
A word of explanation is needed here. Let $r_p(P)$ be the minimal number of relations needed to define $P$ as a  pro-$p$ group. 
In \cite[Corollary 5.5]{kn::L} it is shown that we have:
$$r(P)=\max\{d(P), r_p(P)\}.$$

%

\begin{theorem}
\label{th::rough_presentation_Sylow}
Let ${\bf G}$ be a simple, simply connected, Chevalley group  scheme of rank $l$.
Let $P$ be a Sylow pro-$p$-subgroup of $G={\bf G}\bigl(\mathbf{F}_q(\!(t)\!)\bigr)$ where $q=p^a$.
Then at least for $l \geqslant 3$
and $q \geqslant 16$,  we obtain that $d(P)=a(l+1)$, so in particular $d(P)$ is linear in $a$ and in $l$.
On the other hand, $r_p(P)=r(P)$ is bounded from below and from above by polynomials of degree $2$ in $a$ and in $l$; these polynomials do not depend on $p$.
\end{theorem}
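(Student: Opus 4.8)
The plan is to work entirely inside the pro-$p$ group $P$, which we may identify with the "upper unitriangular" pro-$p$ subgroup of $\mathbf{G}(\mathbf{F}_q[[t]])$, i.e. the preimage under reduction $\mathbf{F}_q[[t]]\to\mathbf{F}_q$ of a maximal unipotent subgroup $U(\mathbf{F}_q)$ of the finite group of Lie type $\mathbf{G}(\mathbf{F}_q)$. For the generator count $d(P)=\dim_{\mathbf{F}_p} P/[P,P]P^p = \dim_{\mathbf{F}_p} H^1(P,\mathbf{F}_p)$, I would first compute the Frattini quotient directly from the root-group/affine-root datum: $P$ is generated by the affine root subgroups attached to the simple affine roots $\alpha_0,\alpha_1,\dots,\alpha_l$ (Iwahori-type generation), each isomorphic to the additive group $\mathbf{F}_q^+\cong(\mathbf{Z}/p)^a$, and the Chevalley commutator relations show that the images of the non-simple affine root groups lie in $[P,P]$ once the rank and $q$ are large enough. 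This gives $d(P)\leqslant a(l+1)$; the reverse inequality follows by exhibiting $a(l+1)$ independent characters $P\to\mathbf{F}_p$ — compose reduction $P\to U(\mathbf{F}_q)$ with the $l$ fundamental characters of $U(\mathbf{F}_q)/[U,U]\cong\mathbf{F}_q^l$, plus one more coming from the affine node $\alpha_0$ (the lowest root entry) — and check these are linearly independent over $\mathbf{F}_p$ by choosing an $\mathbf{F}_p$-basis of $\mathbf{F}_q$. The hypotheses $l\geqslant 3$ and $q\geqslant 16$ are exactly what is needed to guarantee that no "small rank / small field" degeneracy merges a simple affine root group into the derived subgroup or forces extra relations among these characters (the usual $A_1$, ${}^2A_2$, $q\in\{2,3\}$ exceptions in the theory of presentations of groups of Lie type).

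For the relation count, recall $r_p(P)=\dim_{\mathbf{F}_p}H^2(P,\mathbf{F}_p)$ and, by \cite[Corollary 5.5]{kn::L} quoted in the excerpt, $r(P)=\max\{d(P),r_p(P)\}$; since $d(P)$ is linear and we aim for a quadratic lower bound for $r_p(P)$, we will have $r(P)=r_p(P)$ automatically, so it suffices to sandwich $\dim H^2(P,\mathbf{F}_p)$ between two degree-$2$ polynomials in $a$ and $l$ independent of $p$. For the \emph{upper} bound I would use Theorem \ref{th::presentation_max_compact} together with the Reidemeister–Schreier estimate already recorded in the excerpt — but that only gives an exponential bound, so instead I would build an explicit pro-$p$ presentation of $P$ by hand: generators $= a(l+1)$ (one $\mathbf{F}_p$-basis worth for each simple affine root group), relators $=$ (i) the Chevalley/Steinberg commutator relations $[x_{\alpha}(s),x_{\beta}(u)]=\prod x_{i\alpha+j\beta}(c_{ij}s^iu^j)$ restricted to pairs of simple affine roots, rewritten in terms of the generators (there are $O(l^2)$ pairs, each contributing $O(a^2)$ relators once one expands products in $\mathbf{F}_q$ over an $\mathbf{F}_p$-basis, since the structure constants are in the prime field), plus (ii) the $p$-th power relations $x_\alpha(s)^p$ and a bounded number of relations handling the affine node $\alpha_0$ and the torus-free normalization. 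Counting gives $r_p(P)\leqslant O(a^2 l^2)$, a degree-$2$ polynomial with $p$-independent coefficients.

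The \emph{lower} bound is the main obstacle, and I would obtain it cohomologically. Writing $\bar P=P/[P,P]P^p\cong\mathbf{F}_p^{\,a(l+1)}$, the five-term exact sequence for the extension $1\to [P,P]P^p\to P\to\bar P\to 1$ embeds a large chunk of $H^2(P,\mathbf{F}_p)$; more efficiently, use the fact that for a pro-$p$ group $r_p(P)\geqslant \dim H^2$ and $H^2(P,\mathbf{F}_p)$ surjects onto the space of "essential relations", which contains the quotient of $\wedge^2\bar P$ (the universal source of commutator relations) by the image of $H^2$ of the free pro-$p$ group — concretely, one shows that the commutator map $\wedge^2 H^1(P,\mathbf{F}_p)\to H^2(P,\mathbf{F}_p)$, dual to the commutator pairing on $\bar P$, has image of dimension at least $c\cdot a^2 l^2$ because the affine Chevalley commutator relations among the simple affine root groups are "genuinely new" relations: in $\bar P$ the $a(l+1)$ generators pairwise commute, so every nonzero entry of the commutator form on $P$ (coming from a pair of simple affine roots whose sum is a root, of which there are $\Theta(l)$ adjacent pairs, each giving a bilinear form on $\mathbf{F}_q\times\mathbf{F}_q$ of rank $\Theta(a)$ over $\mathbf{F}_p$, hence $\Theta(a^2)$ independent scalar relations) must be realized by a new relator. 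Making this "rank of the commutator pairing" computation precise — identifying it with a sum over edges of the affine Dynkin diagram of the $\mathbf{F}_p$-rank of the $\mathbf{F}_q$-bilinear form $(s,u)\mapsto su$, which is $a$, and checking these contributions are independent in $H^2$ — is where the real work lies, and the condition $l\geqslant 3$, $q\geqslant 16$ is again what rules out the degenerate affine diagrams ($\widetilde A_1$, $\widetilde A_2$) and small fields where this lower bound would drop below quadratic. Assembling (i)+(ii) then yields $c_1(a^2+l^2)\leqslant r_p(P)=r(P)\leqslant c_2(a^2 l^2)$ with $c_1,c_2$ independent of $p$, completing the proof.
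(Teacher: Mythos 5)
Your computation of $d(P)$ is essentially the paper's (Frattini quotient of the pro-unipotent radical of the Iwahori subgroup, one copy of $\mathbf{F}_q^+$ per simple affine root), but both halves of the relation count have genuine gaps. The main one is the upper bound: you write down a candidate pro-$p$ presentation of $P$ --- generators from the simple affine root groups, relators the Chevalley commutator relations among pairs of them plus $p$-th powers --- and simply assert that it presents $P$. That sufficiency claim is the heart of the theorem and is exactly what the paper spends most of Section \ref{s - Sylow} establishing, in two steps you do not supply: (i) Lemma \ref{key_lemma}, which uses the congruence subgroup property for ${\bf G}(\mathbf{F}_q[t])$ to identify $P$ with the pro-$p$ completion of the discrete group $\Gamma=U_+\leqslant{\bf G}(\mathbf{F}_q[t,t^{-1}])$, so that a discrete presentation of $U_+$ becomes a pro-$p$ presentation of $P$; and (ii) the Devillers--M\"uhlherr amalgamation theorem, which says that for a $3$-spherical diagram and $q\geqslant 16$ the group $U_+$ is the amalgam of its rank-$1$ and rank-$2$ root subgroups --- this is where the hypotheses $l\geqslant 3$ and $q\geqslant 16$ actually enter, not (as you guess) via small-rank or small-field exceptions in presentations of finite groups of Lie type. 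Without (i) and (ii) there is no argument that the rank-$2$ relations generate all relations of $P$. (The paper then still needs real work --- the Biss--Dasgupta presentation for the $A_2$ pairs and Lemma \ref{lemma::Sp_4} for the $C_2$ pairs --- to bring the per-pair relation count down to $O(a^2)$ rather than the $O(q^2)$ of the raw Steinberg presentation.)

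The lower bound is also not established as written. Your count --- $\Theta(l)$ adjacent pairs of simple affine roots, each contributing $\Theta(a^2)$ --- yields only $\Theta(a^2l)$, which is linear in $l$; the quadratic-in-$l$ contribution would have to come from the $\Theta(l^2)$ \emph{non-adjacent} (commuting) pairs, and proving that all $\Theta(a^2l^2)$ resulting initial forms remain independent in $H^2(P,\mathbf{F}_p)$ is precisely the work you defer. Your closing bound $c_1(a^2+l^2)$ is in any case weaker than what the theorem asserts. The paper sidesteps all of this with the Golod--Shafarevich inequality $r_p(P)\geqslant d(P)^2/4=a^2(l+1)^2/4$ (Proposition \ref{GS}), applicable because $P$ is commensurable with an $\mathbf{F}_q[[t]]$-standard group in the sense of Lubotzky--Shalev, which gives the subexponential growth of $\dim(\Delta^n/\Delta^{n+1})$ needed for Golod--Shafarevich. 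Given $d(P)=a(l+1)$, that one-line deduction is both sharper and easier than the cup-product route you sketch.
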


The precise formula on the number of generators is deduced from Kac-Moody theory \cite[Corollary 2.5]{kn::CR}.
The lower bound on the number of relations is derived from  the Golod-Shafarevich inequality while the upper bound is again deduced from the theory of Kac-Moody groups.
More precisely, a discrete Kac-Moody group $\Gamma_0$ is chosen carefully so that $P$ is equal to the pro-$p$ completion of a well-understood subgroup $\Gamma$ of $\Gamma_0$. The presentation of $\Gamma$ in the discrete category serves also as a presentation for $P$ in the pro-$p$ category. 

The method of the proof  enables us to go also backward and to deduce from the Golod-Shafarevich inequality that the stated presentation of $\Gamma$ is essentially optimal (see Corollary~\ref{cor::2.5}).

Recall that for a profinite group the number of relations is expressed by the dimensions of the $2$nd cohomology groups of various modules \cite[Corollary 5.5, 5.6]{kn::L}.
It gets a particular nice form for a pro-$p$ group $P$, where $r_p(P)={\rm dim}_{{\bf Z}/p{\bf Z}} {\rm H}^2(P,\mathbf{Z}/p\mathbf{Z})$.
Thus combining Theorems \ref{th::presentation_max_compact} and \ref{th::rough_presentation_Sylow} we obtain

\begin{cor}
\label{cor::combined}
Let ${\bf G}$ be a simple, simply connected, Chevalley group scheme of rank $l$.
Let $P$ be a Sylow pro-$p$-subgroup in $G={\bf G}\bigl(\mathbf{F}_q[[t]]\bigr)$ with $q=p^a$.
Then the following conditions hold:
\begin{enumerate}
\item If $l\geqslant 2$ and $q\geqslant 4$, then ${\rm dim}_{\mathbf{F}_s} {\rm H}^2(G,M)\leqslant C\cdot {\rm dim}_{\mathbf{F}_s} M$ for every simple $\mathbf{F}_s[G]$-module $M$ and every prime $s$.
\item If $l\geqslant 3$
and $q\geqslant 16$, then $\dim_{{\bf Z}/p{\bf Z}} {\rm H}^2(P,\mathbf{Z}/p\mathbf{Z})$  is bounded from below and from above by polynomials of degree $2$ in $a$ and in $l$; these polynomials do not depend on $p$.
\end{enumerate}
\end{cor}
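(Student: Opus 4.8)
\medskip

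The plan is to read both items off Theorems~\ref{th::presentation_max_compact} and~\ref{th::rough_presentation_Sylow} by passing, via the dictionary of \cite{kn::L}, from the combinatorial data ``number of generators and relations'' to the dimensions of the second cohomology groups that measure it. No genuinely new ingredient enters; the work is entirely in applying this dictionary in the correct category (profinite for $G$, pro-$p$ for $P$).

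For (1), fix $l\geqslant 2$ and $q\geqslant 4$ and let $\Sigma(G)$ be the profinite presentation of $G={\bf G}(\mathbf{F}_q[[t]])$ furnished by Theorem~\ref{th::presentation_max_compact}; it is encoded by a short exact sequence $1\to R\to F\to G\to 1$ with $F$ free profinite on $D_{\Sigma(G)}$ generators and $R$ the closed normal subgroup of $F$ generated by $R_{\Sigma(G)}$ elements. Let $s$ be any prime and $M$ a finite $\mathbf{F}_s[G]$-module, regarded as an $F$-module on which $R$ acts trivially. Then ${\rm H}^1(R,M)^{G}={\rm Hom}_{\widehat{\mathbf{Z}}[[G]]}\bigl(R^{\mathrm{ab}},M\bigr)$, and since $F$ is free one has ${\rm H}^2(F,M)=0$, so the five-term exact sequence of the extension presents ${\rm H}^2(G,M)$ as a quotient of ${\rm H}^1(R,M)^{G}$. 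The module $R^{\mathrm{ab}}$ is generated over $\widehat{\mathbf{Z}}[[G]]$ by the $R_{\Sigma(G)}$ images of the normal generators of $R$, whence a surjection $\widehat{\mathbf{Z}}[[G]]^{R_{\Sigma(G)}}\twoheadrightarrow R^{\mathrm{ab}}$ and therefore an injection ${\rm Hom}_{\widehat{\mathbf{Z}}[[G]]}\bigl(R^{\mathrm{ab}},M\bigr)\hookrightarrow M^{R_{\Sigma(G)}}$. Combining, $\dim_{\mathbf{F}_s}{\rm H}^2(G,M)\leqslant R_{\Sigma(G)}\cdot\dim_{\mathbf{F}_s}M\leqslant C\cdot\dim_{\mathbf{F}_s}M$, with $C$ as in Theorem~\ref{th::presentation_max_compact}; this bound is uniform in $s$ and in $M$, and specializing to simple modules yields (1). (This implication is the content of \cite[Cor.~5.5 and 5.6]{kn::L}; the short derivation above is included only for convenience.)

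For (2), recall from \cite{kn::L} that a finitely generated pro-$p$ group $P$ satisfies $r_p(P)=\dim_{\mathbf{Z}/p\mathbf{Z}}{\rm H}^2(P,\mathbf{Z}/p\mathbf{Z})$. A Sylow pro-$p$ subgroup $P$ of ${\bf G}(\mathbf{F}_q[[t]])$ is, up to conjugacy, a Sylow pro-$p$ subgroup of ${\bf G}\bigl(\mathbf{F}_q(\!(t)\!)\bigr)$: the first group is open in the second, the maximal pro-$p$ subgroups of the second are mutually conjugate, and some conjugate of such a subgroup is contained in ${\bf G}(\mathbf{F}_q[[t]])$, as recalled in the introduction. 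Hence for $l\geqslant 3$ and $q\geqslant 16$ Theorem~\ref{th::rough_presentation_Sylow} applies, and its lower and upper bounds on $r(P)=r_p(P)$ by degree-$2$ polynomials in $a$ and $l$ that do not depend on $p$ translate at once into the asserted bounds on $\dim_{\mathbf{Z}/p\mathbf{Z}}{\rm H}^2(P,\mathbf{Z}/p\mathbf{Z})$.

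Since the argument is essentially formal, the main obstacle (such as it is) is purely bookkeeping: one must check that the estimate in (1) is genuinely independent of the prime $s$ --- which holds because the cohomological computation never uses $s=p$ --- and that ``number of relations'' is interpreted in the appropriate category throughout, namely profinite for $G$ (so that $r(G)\leqslant R_{\Sigma(G)}$, already noted after Theorem~\ref{th::presentation_max_compact}) and pro-$p$ for $P$ (so that the relevant identity is $r_p(P)=\dim_{\mathbf{Z}/p\mathbf{Z}}{\rm H}^2(P,\mathbf{Z}/p\mathbf{Z})$). There is no substantial difficulty beyond this.
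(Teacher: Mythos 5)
Your proposal is correct and follows essentially the same route as the paper, which simply combines Theorems~\ref{th::presentation_max_compact} and~\ref{th::rough_presentation_Sylow} with the dictionary of \cite[Cor.~5.5, 5.6]{kn::L} between relations and second cohomology (and the identity $r_p(P)=\dim_{\mathbf{Z}/p\mathbf{Z}}{\rm H}^2(P,\mathbf{Z}/p\mathbf{Z})$ for pro-$p$ groups). Your explicit five-term-sequence derivation of the bound in (1) and the remark identifying the Sylow pro-$p$ subgroups of ${\bf G}(\mathbf{F}_q[[t]])$ and ${\bf G}\bigl(\mathbf{F}_q(\!(t)\!)\bigr)$ are just spelled-out versions of what the paper leaves implicit.
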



\smallskip

Theorems  \ref{th::presentation_max_compact} and \ref{th::rough_presentation_Sylow} suggest that similar results are valid also in characteristic $0$, but our methods are not that efficient there (see $\S$3).
There are two main differences between the characteristic zero case and the positive characteristic case.

The first one is that in characteristic $p$ $\it{all}$ local fields are obtained as completions of $\it{one}$ global field $\mathbf{F}_p(t)$, while in characteristic $0$, there is no such global field.
In our method of proof, which goes from global to local, this difference is crucial: we prove a uniform result for all groups defined over a given global field $k$ (Theorem \ref{theorem::3.1}), but there is no uniform result over all global fields (see Remark~\ref{remark::34}).
Our uniform result for local fields in characteristic $p$ uses substantially  the fact that they all are completions of one global field.

Secondly, the pro-$p$ Sylow subgroup of ${\bf G}\bigl(\mathbf{F}_q(\!(t)\!)\bigr)$ is the pro-$p$ completion  of a suitable subgroup of a Kac-Moody group (see Lemma~\ref{key_lemma} below for the exact result) and we are making a crucial use of this fact. No such result is known to us in characteristic $0$.

Still one has good reasons to believe that results like Theorem 0.1 and Theorem 0.2 are valid also in characteristic $0$. In Section 3, we sketch few partial results in this direction.

\smallskip

The structure of the paper is as follows.
Section \ref{s - maximal compact} deals with uniformly bounded profinite presentations of maximal compact subgroups of non-archimedean Chevalley groups in characteristic $p$.
Section \ref{s - Sylow} investigates more carefully the maximal pro-$p$ subgroups of these groups.
Section 3 goes back to the questions of $\S$\ref{s - maximal compact} in the characteristic 0 case.
The proof for the global case is given in $\S$4 and uses K-theory.

\section{Presentations of $\mathbf{F}_q[[t]]$-points of split simple groups}
\label{s - maximal compact}
 
Let $q=p^a$ with $p$ a prime and $a \geqslant 1$.
Let ${\bf G}$ be a simple, simply connected, Chevalley group.
In this section we are interested in controlling presentations of the virtually pro-$p$ groups ${\bf G}(\mathbf{F}_{q^e}[[t]])$ for arbitrary exponent $e \geqslant 1$. 
The general idea of this section is that ${\bf G}(\mathbf{F}_{q^e}[[t]])$ appears as a factor of the profinite completion of  ${\bf G}(\mathbf{F}_q[t,t^{-1}])$ and that a presentation of the latter discrete group naturally gives a profinite presentation of this completion.

Therefore, in the first half of the argument we are only interested in ${\bf G}(\mathbf{F}_q[t,t^{-1}])$.
Usually, the main viewpoint on the group ${\bf G}(\mathbf{F}_q[t,t^{-1}])$ is as an $S$-arithmetic group \cite{Margulis}. 
It provides a lot of information (cf. \cite{PlatRap}), but another possibility is to see it as a (split) Kac-Moody group of affine type.
This gives additional information of combinatorial nature.
Indeed, this group acts on a product of two twin buildings \cite[12.6.3]{RemyAst} and using the action on a single building, we can deduce from this a suitable amalgamation theorem for ${\bf G}(\mathbf{F}_q[t,t^{-1}])$. 
We use here a combination of the two viewpoints.

\subsection{Bounded presentation for affine Kac-Moody groups over finite fields}

To begin with, let us recall  that by  \cite[Theorem 2.1]{kn::Cap} there exists a so-called {\it bounded}~family of presentations for affine Kac-Moody groups (with the exception of $\widetilde{A}_1$ and  ${^*\widetilde{A}}_1$). In particular, we have: 

\begin{theorem}[\cite{kn::Cap}]
\label{theorem::C}
 There exists a constant $C>0$ such that for any simple, simply connected, Chevalley group scheme ${\bf G}$ of rank $\geqslant 2$ and for any prime power $q \geqslant 4$, the group $G={\bf G}(\mathbf{F}_q[t,t^{-1}])$ admits a presentation  $\Sigma(G)$ with $D_{\Sigma(G)}$ generators and $R_{\Sigma(G)}$ relations satisfying 

\smallskip 

\centerline{$D_{\Sigma(G)}+R_{\Sigma(G)} \leqslant C$.}
\end{theorem}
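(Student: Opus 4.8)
We indicate how this is obtained in \cite{kn::Cap}. The plan is to exploit the twin root datum of affine type $\widetilde X_l$ carried by $G={\bf G}(\mathbf{F}_q[t,t^{-1}])$, to read off a presentation from its action on the associated twin building, and then to remove the a priori dependence of that presentation on $q$ and on the rank $l$.

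\smallskip

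First, $G$ is generated by its root groups $U_\al\cong(\mathbf{F}_q,+)$, indexed by the affine root system, together with the standard rank-$\leq 2$ subgroups $G_J=\la U_{\pm\al_j}:j\in J\ra$, where $J$ runs over subsets of the set of $l+1$ simple roots. Since ${\bf G}$ has rank $\geq 2$, the affine Dynkin diagram has at least three nodes and is $2$-spherical: every rank-$2$ subdiagram is of finite type, the forbidden value $m_{ij}=\infty$ arising only for $\widetilde A_1$ and ${^*\widetilde{A}}_1$, which is exactly why those cases must be excluded. Under these hypotheses the twin building has the simple connectivity needed for the amalgamation (local-to-global) principle for $2$-spherical twin root data of rank $\geq 3$ (see \cite{RemyAst,kn::Cap}), so $G$ is the direct limit of the system $(G_J)_{|J|\leq 2}$ amalgamated along the $G_{J\cap J'}$. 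One thus presents $G$ by concatenating presentations of the finitely many $G_J$ with $|J|=2$ and adjoining the amalgamation relations that identify common subgroups.

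\smallskip

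Each $G_J$ with $|J|=2$ is essentially a finite group of Lie type of rank $2$ over $\mathbf{F}_q$ --- one of $\mathrm{SL}_3$, $\mathrm{Sp}_4$, $\mathrm{SL}_2\times\mathrm{SL}_2$, $G_2$, possibly extended by a subtorus --- and by the quantitative results on finite groups of Lie type \cite{kn::GKKL1,kn::GKKL2,kn::GKKL3} each admits a presentation of length bounded independently of $q$; the mechanism there is to encode the $\mathbf{F}_q$-structure inside a fixed rank-$2$ Chevalley subgroup rather than to present $(\mathbf{F}_q,+)$ as an $\mathbf{F}_p$-vector space, and to present $G_J$ \emph{relative} to its rank-$1$ subgroups $G_j$ and to its torus, so that no standalone presentation of $\mathrm{SL}_2(\mathbf{F}_q)$ is required (this is also where the hypothesis $q\geq 4$ is used). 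This removes the dependence on $q$, but the resulting presentation still has length $O(l)$ --- one block per edge of the diagram. To bound the length independently of $l$ one uses the very rigid shape of the affine diagram (an $(l+1)$-cycle for $\widetilde A_l$; a path with one or two pendant nodes for $\widetilde B_l$, $\widetilde C_l$, $\widetilde D_l$; finitely many exceptional types for which no uniformity in $l$ is needed): one replaces the $O(l)$ blocks by a single generic rank-$2$ block together with a ``shift'' element realising the cyclic, resp. linear, structure of the diagram, plus a bounded set of relations expressing that this shift is compatible with the block relations and with the amalgamation. This is the same device by which \cite{kn::GKKL1,kn::GKKL2,kn::GKKL3} obtain bounded presentations of $\mathrm{SL}_n$ and of the symmetric and alternating groups, transported here to the affine Coxeter group $\widetilde W=W\ltimes Q^\vee$ governing $G$.

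\smallskip

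The building-theoretic step is classical and the finite-group step can be quoted, so I expect the real difficulty to be the last one: compressing the $O(l)$-constituent amalgam into a presentation of bounded length forces a choice of generators and relations finely adapted to the affine Coxeter structure, and this is where the substance of \cite{kn::Cap} lies. It is also what makes the exclusion of the rank-$1$ affine groups unavoidable, there being no rank-$2$ block from which to run the argument.
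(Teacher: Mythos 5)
This statement is quoted from \cite{kn::Cap}; the paper gives no proof, so your sketch has to be measured against the argument there (which, as the authors say at the start of \S\ref{s - char 0}, is the same line they follow for Theorem~\ref{theorem::3.3}). Your first two steps match it: the Abramenko--M\"uhlherr/Caprace amalgamation for $2$-spherical twin root data (this is where $q\geqslant 4$ and the exclusion of $\widetilde{A}_1$, ${}^*\widetilde{A}_1$ enter), and the Guralnick--Kantor--Kassabov--Lubotzky bounded presentations to handle the individual pieces uniformly in $q$.

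The gap is in your third step, the compression in $l$. You propose to keep the $O(l)$ (in fact $O(l^2)$, since non-adjacent pairs also contribute commutation blocks) rank-$2$ constituents and collapse them via a ``shift'' element realising the symmetry of the affine diagram. But such an element does not live in $G$: for $\widetilde{A}_l$ the rotation of the cycle corresponds to the fundamental group $P^\vee/Q^\vee$, i.e.\ to an \emph{outer} automorphism of ${\rm SL}_{l+1}(\mathbf{F}_q[t,t^{-1}])$ realised only in the adjoint or extended group, and for types $\widetilde{B}_l$, $\widetilde{C}_l$, $\widetilde{D}_l$ the diagram automorphism group is bounded and nowhere near transitive on nodes, so there is no shift realising the ``linear structure of the path'' at all. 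The actual proof sidesteps this entirely: one covers the affine diagram by a \emph{bounded} number of spherical subdiagrams of possibly large rank (e.g.\ two overlapping copies of $A_l$ covering the $\widetilde{A}_l$ cycle), observes that the corresponding subgroups are finite Chevalley groups such as ${\rm SL}_{l+1}(\mathbf{F}_q)$ which \cite{kn::GKKL1,kn::GKKL2,kn::GKKL3} present boundedly \emph{uniformly in the rank as well as in $q$} (this is where the symmetric-group/shift machinery is used, but inside the spherical groups where the permutation matrices genuinely exist), and then amalgamates these finitely many boundedly presented pieces along their (also boundedly presented) intersections. This is precisely the scheme of Steps~I--V in \S\ref{ss - proof}. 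So the dependence on $l$ is outsourced to the high-rank spherical case rather than re-engineered on the affine diagram; as written, your last step would not go through.
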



\subsection{Bounded presentation for profinite Chevalley groups in characteristic~$p$}

We can now turn to completion processes. 
The following proposition provides a relationship between presentations of ${\bf G}(\mathbf{F}_q[t,t^{-1}])$ and of ${\bf G}(\mathbf{F}_q[[t]])$; it is a quantitative version of a method already used in \cite{LubPres}. 

\begin{prop}
\label{prop::AL}
Assume that the rank of the simple, simply connected, Chevalley group ${\bf G}$ is $\geqslant 2$ and that the arithmetic group ${\bf G}(\mathbf{F}_q[t, t^{-1}])$  has a presentation with $d$ generators and $r$ relations.  
Then for any $e \geqslant 1$, the group ${\bf G}(\mathbf{F}_{q^e}[[t]])$ has a profinite presentation with $d$ generators and $r+1$ relations.
\end{prop}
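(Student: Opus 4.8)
The plan is to exploit the fact that ${\bf G}(\mathbf{F}_{q^e}[[t]])$ is a quotient of a profinite completion of the discrete group $\Gamma := {\bf G}(\mathbf{F}_q[t,t^{-1}])$. First I would observe that there is a natural ring homomorphism $\mathbf{F}_q[t,t^{-1}] \to \mathbf{F}_{q^e}[[t]]$ (the composite of the inclusion $\mathbf{F}_q \hookrightarrow \mathbf{F}_{q^e}$ with the completion map, sending $t \mapsto t$), which induces a group homomorphism $\varphi\colon \Gamma \to {\bf G}(\mathbf{F}_{q^e}[[t]])$. The key point, which goes back to the congruence subgroup property for these $S$-arithmetic groups (this is the input alluded to right after Theorem \ref{th::presentation_max_compact}, and the method of \cite{LubPres}), is that $\varphi$ has dense image and that the closure of $\varphi(\Gamma)$ inside the profinite completion $\widehat{\Gamma}$ is exactly a single factor corresponding to the completion at the place $t=0$ — concretely, the congruence completion of $\Gamma$ at that place is ${\bf G}(\mathbf{F}_q[[t]])$, and after the finite base change it is ${\bf G}(\mathbf{F}_{q^e}[[t]])$. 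So $\varphi$ extends to a continuous surjection $\widehat{\varphi}\colon \widehat{\Gamma} \twoheadrightarrow {\bf G}(\mathbf{F}_{q^e}[[t]])$ of profinite groups.

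Next I would run the standard bookkeeping: a discrete presentation $\Gamma = \langle X \mid R\rangle$ with $|X| = d$, $|R| = r$ gives a profinite presentation $\widehat{\Gamma} = \langle X \mid R\rangle$ (presentations pass to profinite completions with the same generators and relators). Since ${\bf G}(\mathbf{F}_{q^e}[[t]])$ is a continuous quotient of $\widehat{\Gamma}$, it is presented on the same $d$ generators by $R$ together with the extra relators needed to cut down to the quotient, i.e. by $R$ together with generators of $\ker\widehat{\varphi}$ as a normal closed subgroup. The content of the proposition is that \emph{one} extra relation suffices. For this I would identify $\ker\widehat{\varphi}$ as the closed normal subgroup generated by the images in $\widehat{\Gamma}$ of the "other" congruence data; the heuristic is that $\widehat{\Gamma}$ decomposes (via CSP and strong approximation) essentially as a product over the places of $\mathbf{F}_q(t)$ outside $\{t\}$, and killing that complementary part is achieved by a single well-chosen element — typically a generator of the center-type or a diagonal/unipotent element supported away from $t=0$ that normally generates the complement. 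I would make this precise by choosing the element so that its normal closure in $\widehat\Gamma$ is exactly $\ker\widehat\varphi$; one natural candidate is an element mapping to a generator of ${\bf G}(\mathbf{F}_q[t^{-1}])$-type data, using that ${\bf G}(\mathbf{F}_q[t,t^{-1}])$ sits between the two "half-line" rings.

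The main obstacle I expect is precisely this last step: proving that the kernel of $\widehat\varphi$ is \emph{normally generated by a single element}, rather than merely topologically finitely generated. The clean way to handle it is to use the building-theoretic / amalgam picture mentioned in the paper: $\Gamma = {\bf G}(\mathbf{F}_q[t,t^{-1}])$ acts on the product of two twin buildings, and projecting to one factor realizes $\Gamma$ as an amalgam in which ${\bf G}(\mathbf{F}_q[[t]])$ appears as one of the vertex groups, with the "complementary" direction contributed by a single additional vertex or edge group generated (modulo the rest) by one Chevalley generator $x_{-\alpha}(t^{-1})$ or a lift of a fundamental reflection. Killing that one generator — equivalently, imposing that $t^{-1}$ acts trivially, i.e. collapsing $\mathbf{F}_q[t,t^{-1}]$ onto a ring where $t$ is topologically nilpotent — is a single relation, and completing yields ${\bf G}(\mathbf{F}_{q^e}[[t]])$. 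Thus the bound is $d$ generators and $r+1$ relations, as claimed. Finally I would note that the base change $\mathbf{F}_q \subseteq \mathbf{F}_{q^e}$ is harmless here: it is already absorbed into the choice of topology on the completion (the congruence topology at $t=0$ over $\mathbf{F}_q$ sees all finite residue extensions), so no generators or relations are added on its account, and the rank $\geqslant 2$ hypothesis is what guarantees (via CSP) that there is no extra metaplectic-type kernel to worry about.
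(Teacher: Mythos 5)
Your high-level strategy is the same as the paper's: identify $\widehat{\Gamma}$ (for $\Gamma={\bf G}(\mathbf{F}_q[t,t^{-1}])$) with ${\bf G}(\widehat{A})$ via strong approximation and the congruence subgroup property, observe that a discrete presentation with $d$ generators and $r$ relations yields a profinite presentation of $\widehat{\Gamma}$ with the same numbers, and then kill the complement of the distinguished factor with one additional relation. However, the step you yourself identify as the main obstacle --- that the kernel of $\widehat{\Gamma}\twoheadrightarrow {\bf G}(\mathbf{F}_{q^e}[[t]])$ is the normal closure of a \emph{single} element --- is precisely where your argument has a genuine gap, and your proposed mechanism for it would fail. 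First, a setup error: there is no ring homomorphism $\mathbf{F}_q[t,t^{-1}]\to\mathbf{F}_{q^e}[[t]]$ with $t\mapsto t$, since $t$ is a unit in the source but not in the target. The place $t=0$ lies in $S=\{0,\infty\}$ and contributes nothing to the congruence completion; the distinguished factor is the completion of $A$ at a monic irreducible $f\neq t$ of degree $e$ (with uniformizer essentially $f$). Consequently the kernel $M$ is the product of ${\bf G}(\widehat{A}^{f'})$ over \emph{all the remaining} irreducible polynomials $f'$, i.e.\ infinitely many factors ${\bf G}(\mathbf{F}_{q^s}[[t]])$ with $s\geqslant 1$. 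It has nothing to do with a ``${\bf G}(\mathbf{F}_q[t^{-1}])$ direction,'' and ``collapsing $\mathbf{F}_q[t,t^{-1}]$ onto a ring where $t$ is topologically nilpotent'' is not meaningful: no quotient of $\mathbf{F}_q[t,t^{-1}]$ has $t$ non-invertible. Killing a Chevalley generator $x_{-\alpha}(t^{-1})$ via the twin-building amalgam neither targets this kernel nor plausibly yields the right quotient.

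The missing idea is structural and is the real content of the proposition. Each factor ${\bf G}(\mathbf{F}_{q^s}[[t]])$ is perfect and decomposes as ${\bf G}(\mathbf{F}_{q^s})\ltimes Q$ with $Q$ pro-$p$; hence any simple quotient kills $Q$ and factors through ${\bf G}(\mathbf{F}_{q^s})$, which has a unique simple quotient, so each factor has a \emph{unique} maximal closed normal subgroup. One then chooses $g\in M$ whose coordinate in every factor lies outside that factor's maximal normal subgroup. Since every finite simple quotient of the product $M$ factors through a single coordinate, $g$ survives in every simple quotient of $M$, so its closed normal closure cannot be contained in any maximal closed normal subgroup and therefore equals $M$. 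You do gesture at a ``diagonal element,'' but without the unique-maximal-normal-subgroup analysis of the factors (and the fact that simple quotients of a product factor through a coordinate) there is no justification that one relator suffices to kill an infinite product, and the count $r+1$ is not established.
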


\begin{proof}
Set $A=\mathbf{F}_q[t,t^{-1}]$ and denote by $\widehat{A}$ the profinite completion of this ring. 
Let  $\mathcal{P}$ be the set of monic irreducible polynomials in $\mathbf{F}_q[t]$ and 
$\mathcal{P}_e$ those of degree $e$.
The prime ideals in $A=\mathbf{F}_q[t][{1 \over t}]$ are parametrized by $\mathcal{P} \setminus \{ t \}$, so by the Chinese remainder theorem, we have

\smallskip 

\centerline{$\widehat{A} \, \cong \prod_{f \in \mathcal{P} \setminus \{ t \}} \widehat{A}^f$}

\smallskip  

\noindent where $\widehat{A}^f = \varprojlim_{n \geqslant 1} A/(f^n)$ is isomorphic to $\mathbf{F}_{q^{{\rm deg}(f)}}[[x]]$.
Using $\mathcal{P} = \bigsqcup_{e \geqslant 1} \mathcal{P}_e$, we see therefore that

\smallskip 

\centerline{$\widehat{A} \, \cong \, (\mathbf{F}_q[[t]])^{q-1} \times \prod_{e \geqslant 2} (\mathbf{F}_{q^e}[[t]])^{\# \mathcal{P}_e}$.}

\medskip  

We set $S = \{ 0;\infty \}$. The group ${\bf G}(A)$ is thus an $S$-arithmetic group. We denote by $\widehat{{\bf G}(A)}$ its profinite completion and by $\overline{{\bf G}(A)}$ its $S$-congruence completion.
The canonical map $\pi : \widehat{{\bf G}(A)} \to \overline{{\bf G}(A)}$ is a continuous group homomorphism restricting to the identity map of the dense subgroup ${\bf G}(A)$; as a consequence, $\pi$ is surjective.
Since there is no place of ${\bf F}_q(t)$ at which ${\bf G}$ is anisotropic and since ${\bf G}$ is simply connected, strong approximation \cite{Prasad} implies that the $S$-congruence completion $\overline{{\bf G}(A)}$ is described by means of 
the $S$-ad\`eles $\mathbf{A}_S$ of ${\bf F}_q(t)$. One therefore deduces:
\smallskip

\centerline{$\overline{{\bf G}(A)} \, \cong \, {\bf G}(\widehat{A}) \, \cong \, {\bf G}(\mathbf{F}_q[[t]])^{q-1} \times \prod_{e \geqslant 2} {\bf G}(\mathbf{F}_{q^e}[[t]])^{\# \mathcal{P}_e}$.}

\smallskip

For further use in the paper, let us quote a precise statement on the solution to the congruence subgroup problem (see \cite{RaghunathanCSP}, \cite{PraRag} and \cite[VIII.2.16 Theorem]{Margulis}). 

\begin{theorem}
\label{th:CSP} 
Let ${\bf G}$ be a simply connected Chevalley group, let $k$ be a global field and let $S$ be a finite set of places of $k$. 
Let $A$ denote the ring of $S$-integers in $k$ and for each $v \in S$, let $k_v$ be the corresponding completion of $k$. 
We set ${\rm rk}_S({\bf G}) = \sum_{v \in S} {\rm rk}_{k_v}({\bf G})$ and assume that ${\rm rk}_S({\bf G}) \geqslant 2$. 
Then the kernel of the map $\pi : \widehat{{\bf G}(A)} \to \overline{{\bf G}(A)}$ is cyclic; it is even trivial whenever $S$ contains a non-Archimedean place. 
\end{theorem}

In our case, where $G$ has rank $\geqslant 2$ and all places are non-Archimedean, the map $\pi$ is an isomorphism~: $\widehat{{\bf G}(A)} \cong {\bf G}(\widehat{A})$.
This implies that  for any fixed $e \geqslant 1$, 
one can write $\widehat{{\bf G}(A)}$ as a product:
\smallskip

\centerline{$\widehat{{\bf G}(A)} = {\bf G}(\mathbf{F}_{q^e}[[t]])\times M$}

\smallskip

\noindent where $M$ is a product of infinitely many groups of the form ${\bf G}(\mathbf{F}_{q^s}[[t]])$ with finite multiplicity for each exponent $s \geqslant 1$.

By our group-theoretic hypothesis, the abstract group ${\bf G}(A)$ has a presentation with $d$ generators and $r$ relations, therefore so does $\widehat{{\bf G}(A)}$ as a profinite group.
Now, notice that $M$, as a normal subgroup of $\widehat{{\bf G}(A)}$, is generated by one element!
Indeed, every factor of the direct product $M$, say ${\bf G}(\mathbf{F}_{q^s}[[t]])$, has no non-trivial abelian quotient; moreover it admits a semi-direct product decomposition ${\bf G}(\mathbf{F}_{q^s}) \ltimes Q$ where $Q$ is a pro-$p$ group \cite[1.C]{RemRon}. 
The semi-direct product decomposition implies that ${\bf G}(\mathbf{F}_{q^s}[[t]])$ has a unique maximal normal subgroup.
Indeed, let $\pi : {\bf G}(\mathbf{F}_{q^s}[[t]]) \to S$ be a quotient map to a simple (necessarily non-abelian) group.
The group $Q$ is normal in ${\bf G}(\mathbf{F}_{q^s}[[t]])$, therefore $\pi(Q)$ is a normal $p$-group in $S$, hence is trivial. 
This implies that $\pi$ factorizes through the homomorphism ${\bf G}(\mathbf{F}_{q^s}[[t]]) \to {\bf G}(\mathbf{F}_{q^s})$, whose target group has a unique simple quotient.

Pick then $g \in M$ such that each one of its coordinates is outside that maximal normal subgroup; it is not difficult to see that the normal closure $\langle\!\langle g \rangle\!\rangle_M$ of $g$ in $M$ is $M$ itself.
Thus we get a profinite presentation for $\displaystyle {\bf G}(\mathbf{F}_{q^e}[[t]]) = {\widehat{{\bf G}(A)} \over \{ 1 \} \times \langle\!\langle g \rangle\!\rangle_M}$ which has $d$ generators and $r+1$ relations.
\end{proof}

\medskip

\noindent {\it Proof of Theorem \ref{th::presentation_max_compact}}.
This statement is now a  consequence of the combination of Theorem~\ref{theorem::C} and Proposition~\ref{prop::AL}. \hfill$\square$

\medskip
\begin{remark}
In Theorem 0.1 we assume that $l\geqslant 2$.
We believe it is also true for $l=1$, i.e., for ${\rm SL}_2(\mathbf{F}_q[[t]])$.
\end{remark}

\section{Sylow pro-$p$-subgroups of Chevalley groups over $\mathbf{F}_q(\!(t)\!)$}
\label{s - Sylow}

The other type of compact subgroups we consider in this paper is given by the maximal pro-$p$ subgroups of ${\bf G}\bigl( \mathbf{F}_q(\!(t)\!) \bigr)$.
To prove Theorem 0.2 we will use again Kac-Moody theory, but in a different way than in Theorem  \ref{th::presentation_max_compact}.

The idea is to see the pro-$p$ group we are interested in as the full pro-$p$ completion of a suitable arithmetic group, which can itself be seen as a subgroup of a Kac-Moody group over a finite field. 
Again we use the fact that a presentation for a discrete group naturally leads to a profinite or pro-$p$ presentation of the corresponding completion.


\subsection{The pro-$p$ completions}
\label{ss - pro-p completion}
Let $q=p^a$ $(a\geqslant 1)$ and ${\bf G}$ be a simple simply connected Chevalley group scheme of rank $l \geqslant 3$.
Take $A=\mathbf{F}_q[t]$ and consider $\Gamma_0:={\bf G}(A)$. 
Suppose further that $I=(f(t))$ is an ideal of $A$ generated by an irreducible polynomial $f(t)$ and  $B$ is the completion of $A$ with respect to $I$.
Let $P$ be a pro-$p$-Sylow subgroup of ${\bf G}(B)$.
Then $P$ is an open maximal  pro-$p$ subgroup of ${\bf G}(B)$.
Now consider a very special case: $f(t)=t$, so $B \,\cong\, {\bf F}_q[[t]]$.
Set $\Gamma = \Gamma_0\cap P={\bf G}(A)\cap P$.

\begin{lemma}
\label{key_lemma}
The group $P$ is the pro-$p$ completion of $\Gamma$.
\end{lemma}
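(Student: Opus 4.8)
The plan is to show that the natural map $\Gamma \to P$ induced by the inclusion $\Gamma = {\bf G}(A) \cap P \hookrightarrow P$ extends to an isomorphism of $P$ with the pro-$p$ completion $\widehat{\Gamma}^{(p)}$ of $\Gamma$. This amounts to two things: first, that $\Gamma$ is dense in $P$, so that $P$ is genuinely a pro-$p$ completion of $\Gamma$ with respect to the topology induced from ${\bf G}(B)$; and second, that this induced topology coincides with the full pro-$p$ topology on $\Gamma$, i.e. that every finite $p$-group quotient of $\Gamma$ factors through one of the congruence quotients $\Gamma / (\Gamma \cap \ker({\bf G}(B) \to {\bf G}(B/I^n)))$.

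First I would establish density. Here one uses strong approximation for ${\bf G}$ over $k = \mathbf{F}_q(t)$: since ${\bf G}$ is simply connected and isotropic at the place $\infty$ (which lies outside the ring $A = \mathbf{F}_q[t]$), the group ${\bf G}(A)$ is dense in ${\bf G}(B) = {\bf G}(\mathbf{F}_q[[t]])$ for the $I$-adic topology — this is exactly the kind of statement invoked via \cite{Prasad}, \cite{Prasad} in the proof of Proposition~\ref{prop::AL}. Intersecting with the open subgroup $P$, one gets that $\Gamma = {\bf G}(A) \cap P$ is dense in $P$. Consequently $P = \varprojlim_n \Gamma / N_n$ where $N_n = \Gamma \cap K_n$ and $K_n = \ker\bigl({\bf G}(\mathbf{F}_q[[t]]) \to {\bf G}(\mathbf{F}_q[t]/(t^n))\bigr)$; each $K_n$ is open in ${\bf G}(\mathbf{F}_q[[t]])$, hence $N_n$ has finite index in $\Gamma$, and for $n$ large enough $K_n \subseteq P$ (since $P$ is open), so the $N_n$ form a base of neighbourhoods of $1$ in the topology $\Gamma$ inherits from $P$.

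The heart of the matter — and the step I expect to be the main obstacle — is to show that these $N_n$ are cofinal among all finite-index subgroups of $\Gamma$ whose quotient is a $p$-group; equivalently, that the pro-$p$ topology of $\Gamma$ is no finer than the congruence topology just described. The key point is that $P$, being a pro-$p$ Sylow of ${\bf G}(\mathbf{F}_q[[t]])$, already is a pro-$p$ group, so every continuous finite quotient of $P$ is a $p$-group; what must be ruled out is the existence of a $p$-group quotient of $\Gamma$ that does \emph{not} extend continuously to $P$. This is where the congruence subgroup property enters: by Theorem~\ref{th:CSP}, applied with $k = \mathbf{F}_q(t)$, $S = \{\infty\}$ and noting that ${\rm rk}_S({\bf G}) = {\rm rk}_{k_\infty}({\bf G}) = l \geqslant 3 \geqslant 2$ while $S$ contains the non-Archimedean place $\infty$, the map $\widehat{{\bf G}(A)} \to \overline{{\bf G}(A)}$ is an isomorphism — so every finite-index subgroup of $\Gamma_0 = {\bf G}(A)$, and in particular every finite-index subgroup of $\Gamma$ after a standard transfer/Reidemeister–Schreier argument, is a congruence subgroup. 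One then observes that a $p$-group quotient of $\Gamma$ corresponds to a congruence subgroup; since ${\bf G}(\mathbf{F}_q[t]/(g))$ for $g$ prime to $t$ has order prime to nothing useful but — more to the point — since the only "$p$-part" of the relevant congruence quotients is captured by the $t$-adic congruences (the finite group of Lie type ${\bf G}(\mathbf{F}_{q^d})$ has order divisible by $p$, but its $p$-Sylow is visible only through the full building-stabilizer structure, and intersecting with $P$ kills the non-$p$ part), the $p$-group quotient factors through ${\bf G}(\mathbf{F}_q[t]/(t^n))$ for some $n$, hence through $\Gamma/N_n$. Making this last reduction precise — carefully tracking that no $p$-quotient can come from a prime $\neq (t)$ once one has intersected with the Sylow $P$, using the semidirect decomposition ${\bf G}(\mathbf{F}_{q^d}[[t]]) = {\bf G}(\mathbf{F}_{q^d}) \ltimes Q$ of \cite{RemRon} — is the technical crux, after which $P = \varprojlim_n \Gamma/N_n = \widehat{\Gamma}^{(p)}$ follows.
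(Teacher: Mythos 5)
Your skeleton is the same as the paper's: the congruence subgroup property (Theorem \ref{th:CSP}) identifies $\widehat{\Gamma_0}$ with $\prod_Y {\bf G}(Y)$ over the completions $Y$ of $\mathbf{F}_q[t]$, hence $\widehat{\Gamma}=\prod_{Y\neq B}{\bf G}(Y)\times P$, and everything reduces to seeing that only the factor $P$ survives when one passes to $p$-group quotients. The problem is that the step you yourself single out as ``the technical crux'' is precisely the content of the lemma, and you do not prove it; worse, the heuristics you offer for it point in the wrong direction. The phrase ``intersecting with $P$ kills the non-$p$ part'' does not describe what happens: $\Gamma=\Gamma_0\cap P$ is cut out by a condition at the place $(t)$ only, so for every $Y\neq B$ the group $\Gamma$ still maps onto a dense subgroup of the \emph{full} factor ${\bf G}(Y)\cong{\bf G}(\mathbf{F}_{q^d}[[s]])$ -- no intersection with a Sylow subgroup occurs at those places, and the finite groups of Lie type ${\bf G}(\mathbf{F}_{q^d})$ appearing there certainly have order divisible by $p$. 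So one cannot argue that the $p$-part of the other congruence quotients is invisible; one must argue that it cannot be split off as a \emph{quotient}.

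The missing argument, which is how the paper closes the gap, is that each ${\bf G}(\mathbf{F}_{q^d}[[s]])$ has no nontrivial $p$-quotient at all: writing it as ${\bf G}(\mathbf{F}_{q^d})\ltimes Q$ with $Q$ a normal pro-$p$ subgroup (\cite{RemRon}), any continuous surjection onto a finite simple group sends $Q$ to a normal $p$-subgroup, hence kills it, and therefore factors through the perfect group ${\bf G}(\mathbf{F}_{q^d})$, whose unique simple quotient is nonabelian. Thus ${\bf G}(\mathbf{F}_{q^d}[[s]])$ has no quotient isomorphic to $\mathbf{Z}/p\mathbf{Z}$, hence no nontrivial finite $p$-quotient. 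Once this is in place you can also dispense with the density-plus-cofinality bookkeeping: the pro-$p$ completion of $\Gamma$ is the maximal pro-$p$ quotient of its profinite completion, and applying this to $\widehat{\Gamma}=\prod_{Y\neq B}{\bf G}(Y)\times P$ gives $\Gamma_{\hat p}=P$ directly.
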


\begin{proof}
By the affirmative solution of the CSP (the congruence subgroup problem) for $\Gamma_0$ (see Theorem \ref{th:CSP}), its profinite completion $\widehat{{\bf G}(A)}$ is equal to $\prod_Y  {\bf G}(Y)$, where $Y$ runs over all the completions of $A$ (one of which is $B$). 
As $\Gamma$ is of finite index in $\Gamma_0$, the profinite completion of $\Gamma$ can be easily read from that of $\Gamma_0$:  $\widehat{\Gamma}=\prod_{Y\neq B}{\bf G}(Y)\times P$  when this time $Y$ runs over all of the completions of $A$ except for  $B$.
The completion $B$ contributes the factor $P$.
Now, the pro-$p$ completion of a group is equal (by abstract nonsense) to the maximal pro-$p$ quotient of its profinite completion.
But for $Y\neq B$, the group ${\bf G}(Y)$ has no non-trivial $p$-quotients and so the maximal pro-$p$ quotient of $\widehat{\Gamma}$ is only $P$. 
This finishes the proof.
\end{proof}

\noindent In what follows, we sum up the lemma by writing $P=\Gamma_{\hat{p}}$ (i.e. $\Gamma_{\hat{p}}$ henceforth denotes the pro-$p$ completion of a discrete group $\Gamma$). 

\subsection {Presentations of Sylow $p$-subgroups of ${\rm SL}_3(\mathbf{F}_q)$ and ${\rm Sp}_4(\mathbf{F}_q)$}
\label{ss - unipotent finite} 

As a preparation for proving Theorem 0.2, we will now produce presentations of Sylow $p$-subgroups of  ${\rm SL}_3(\mathbf{F}_q)$  (i.e., $A_2(q)$) and ${\rm Sp}_4(\mathbf{F}_q)$ (i.e., $C_2(q)$), with $q=p^a$. We single out these $p$-groups because, as we will see in the next section, they will turn out to be building blocks of $\Gamma$ from Lemma~\ref{key_lemma}.
\vskip 1mm
$\bf{Notation}:$ For a group $H$ and any $a,b\in H$, denote $[a,b]:=aba^{-1}b^{-1}$ and $a^b:=bab^{-1}$.
\vskip 3mm
In what follows we will need the following well known result attributed to P.~Hall.

\begin{lemma}
\label{lemma::Hall}
Let $G$ be a group that is an extension of $H$ by $N$
$$1\rightarrow N\rightarrow G\rightarrow H\rightarrow 1.$$

Suppose that  $N$ has  a finite presentation $N=\langle n_1, ... , n_r\mid R_1(n_1, ... , n_r), ... , R_k(n_1, ... , n_r)\rangle$
and $H$ has a finite presentation $H=\langle h_1, ... , h_s\mid W_1(h_1, ... , h_s), ... , W_l(h_1, ... , h_s)\rangle$. Then $G$ has the following finite presentation
$$G=\langle n_1, ... , n_r, g_1, ... , g_s \mid R_1(n_1, ... n_r), ... , R_k(n_1, ... , n_r), g_in_jg_i^{-1}=V_{ij}(n_1, ... , n_r),$$
$$g_i^{-1}n_jg_i=U_{ij}(n_1, ... , n_r), 1\leqslant i\leqslant s, 1\leqslant j\leqslant r, W_i(g_1, ... , g_s)=\widetilde{W}_i(n_1, ... , n_r), 1\leqslant i\leqslant l\rangle$$
where $\pi(g_i)=h_i$, $1\leqslant i\leqslant s$, for the natural projection $\pi:G\rightarrow H$ and relations $U_{ij}, V_{ij}$ and $\widetilde{W}_i$ are the obvious suspects.
\end{lemma}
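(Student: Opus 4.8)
The plan is to verify that the displayed presentation defines a group isomorphic to $G$ by the standard two-sided argument: first exhibit a surjection from the group $\widehat{G}$ defined by the presentation onto $G$, then produce an inverse by a counting/normal-form argument. Let me write $F$ for the free group on the symbols $n_1,\dots,n_r,g_1,\dots,g_s$ and $\widehat{G}=F/\langle\!\langle \mathcal{R}\rangle\!\rangle$ for the quotient by the normal closure of the listed relators. For each $i,j$ the word $V_{ij}(n_1,\dots,n_r)$ is, by definition, a fixed word in the $n$'s representing the element $g_i n_j g_i^{-1}$ of $N$ (which lies in $N$ since $N$ is normal), expressed via the chosen generators $n_1,\dots,n_r$ of $N$; similarly $U_{ij}$ represents $g_i^{-1} n_j g_i$, and $\widetilde{W}_i(n_1,\dots,n_r)$ represents the element $W_i(g_1,\dots,g_s)\in N$ — it lies in $N$ because $\pi(W_i(g_1,\dots,g_s)) = W_i(h_1,\dots,h_s)=1$ in $H$. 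These are "the obvious suspects'' referred to in the statement.

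The first step is the easy direction: there is a homomorphism $\varphi\colon \widehat{G}\to G$ sending $n_j\mapsto n_j\in N\leqslant G$ and $g_i\mapsto g_i\in G$ (the chosen preimages of $h_i$). One checks that every listed relator maps to the identity in $G$: the $R_m$ hold because $N$ satisfies its presentation inside $G$; the relations $g_i n_j g_i^{-1}=V_{ij}$ and $g_i^{-1}n_j g_i = U_{ij}$ hold by the very definition of $V_{ij},U_{ij}$; and $W_i(g_1,\dots,g_s)=\widetilde W_i(n_1,\dots,n_r)$ holds by the definition of $\widetilde W_i$. Since the images generate $G$, the map $\varphi$ is surjective.

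The second step is the substantive one: show $\varphi$ is injective, equivalently $|\widehat G|\leqslant |G|$ (in the finite case; in general one argues via a normal form). Let $\widehat N\leqslant \widehat G$ be the subgroup generated by the images of $n_1,\dots,n_r$. Using the relations $g_i^{\pm 1} n_j g_i^{\mp 1}\in \widehat N$ one sees $\widehat N$ is normalized by every $g_i$, hence $\widehat N \trianglelefteq \widehat G$; and by construction the quotient $\widehat G/\widehat N$ is generated by the images $\bar g_i$ subject to the relations $W_i(\bar g_1,\dots,\bar g_s)=1$ (the right-hand sides $\widetilde W_i$ die in the quotient), so there is a surjection $H \twoheadrightarrow \widehat G/\widehat N$. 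Likewise $N$ surjects onto $\widehat N$ since $\widehat N$ is generated by $r$ elements satisfying the relators $R_1,\dots,R_k$. Therefore $|\widehat G| = |\widehat N|\cdot |\widehat G/\widehat N| \leqslant |N|\cdot|H| = |G|$, which combined with surjectivity of $\varphi$ forces $\varphi$ to be an isomorphism. (For infinite $N,H$ one replaces the counting step by the observation that every element of $\widehat G$ can be brought, using the conjugation relations to push all $g_i^{\pm1}$ past all $n_j$, into the form $w(g)\cdot v(n)$ with $w(g)$ a word representing an element of $H$ and $v(n)\in\widehat N$; the map $\widehat G\to G$ is then injective on each "layer'' because it is injective on $N$ and on $H$.)

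The main obstacle — really the only place where care is needed — is bookkeeping in the normal-form/counting step: one must be sure that the relations $g_i n_j g_i^{-1}=V_{ij}$ together with $g_i^{-1} n_j g_i = U_{ij}$ genuinely suffice to move an arbitrary $g_i^{\pm 1}$ across an arbitrary element of $\widehat N$ (not just across the generators $n_j$), which follows by induction on word length once one knows $\widehat N\trianglelefteq\widehat G$, and that the relations $W_i(g)=\widetilde W_i(n)$ are exactly what is needed so that the induced surjection $H\twoheadrightarrow \widehat G/\widehat N$ has no further kernel. Everything else is formal. Since this is a classical and well-documented result (P.~Hall; see also Johnson's \emph{Presentations of Groups}), it is legitimate to record it without reproducing the full verification.
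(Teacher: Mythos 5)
The paper does not actually prove this lemma: it is recorded as a ``well known result attributed to P.~Hall'' and used as a black box, so there is no in-paper argument to compare yours against. Your proof is the standard one and is correct. The two directions are handled properly: von Dyck gives the surjection $\varphi\colon\widehat G\to G$, and the key structural observations --- that the relations $g_i^{\pm1}n_jg_i^{\mp1}\in\widehat N$ for the \emph{generators} $n_j$ already force $\widehat N\trianglelefteq\widehat G$ (because conjugation is a homomorphism, so it suffices to check it on generators), and that $N\twoheadrightarrow\widehat N$ and $H\twoheadrightarrow\widehat G/\widehat N$ --- are exactly what is needed. You are also right that both the $V_{ij}$ and the $U_{ij}$ relations are required, since for infinite groups $g_i\widehat Ng_i^{-1}\subseteq\widehat N$ alone does not yield normality. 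The only place I would tighten the write-up is the final ``injective on each layer'' step: the cleanest packaging is to note that $N\to\widehat N$ and $H\to\widehat G/\widehat N$ are isomorphisms (each composite with the map induced by $\varphi$ is the identity on generators), and then either apply the five lemma to the two short exact sequences, or argue directly that any $x\in\ker\varphi$ has trivial image in $H$, hence its $g$-part $w(g)$ is a product of conjugates of the $W_i(g)^{\pm1}$, each of which lies in $\widehat N$ by the relations $W_i(g)=\widetilde W_i(n)$ and normality of $\widehat N$; thus $x\in\widehat N$, where $\varphi$ is injective. With that made explicit the proof is complete.
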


The next statement is Theorem 2 of  \cite{kn::BD}.


\begin{lemma}
\label{lemma::BD}
Let $S$ be a Sylow $p$-subgroup of the finite group ${\rm SL}_3(\mathbf{F}_q)$ where $q=p^a$ for $a\geqslant 1$.
Then  $|S|=q^3=p^{3a}$, the minimal number of generators of $S$ is $2a$, and  $S$ has the following  presentation on $2a$ generators and with $2a(a+1)$ relations.

There are elements $s_1(v_k), s_2(v_{k}) \in S$,  $1\leqslant k\leqslant a$, that generate $S$ and are subject to the following relations:
\begin{description}
\item[(A1)] $(s_i(v_k))^p=1$ for $i=1,2$ and $1\leqslant k\leqslant a$,
\item[(A2)] $[s_i(v_k),s_i(v_{k'})]=1$ for $i=1,2$ and $1\leqslant k< k'\leqslant a$,
\item[(A3)] $[s_1(v_1),[s_1(v_k),s_2(v_1)]]=[s_2(v_1),[s_1(v_k),s_2(v_1)]]=1$ for $1\leqslant k\leqslant a$,
\item[(A4)] $[s_1(v_k)^{-1},s_2(v_{k'})^{-1}]=\prod_{1\leqslant r\leqslant a} [s_1(v_r),s_2(v_1)]^{c(k,k',r)}$ with some fixed \\ 
$c(k,k',1), \ldots, c(k,k',a)\in\mathbf{Z}$, for $1\leqslant k\leqslant a$ and $2\leqslant k' \leqslant a$. 
\end{description}
\end{lemma}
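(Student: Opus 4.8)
The plan is to realize the Sylow $p$-subgroup $S$ of ${\rm SL}_3(\mathbf{F}_q)$ as the group of upper unitriangular matrices $U$ over $\mathbf{F}_q$, and to build the presentation by applying P.~Hall's Lemma~\ref{lemma::Hall} to a convenient normal series. Concretely, $U$ fits into an extension $1 \to Z \to U \to U/Z \to 1$, where $Z$ is the center (the matrices differing from the identity only in the $(1,3)$-entry, so $Z \cong (\mathbf{F}_q,+)$) and $U/Z$ is abelian, isomorphic to $(\mathbf{F}_q,+)^2$. Fixing an $\mathbf{F}_p$-basis $v_1,\dots,v_a$ of $\mathbf{F}_q$, the first root subgroup contributes generators $s_1(v_k)$ (elementary matrices $x_{12}(v_k)$), the second root subgroup contributes $s_2(v_k)$ (the $x_{23}(v_k)$), and these $2a$ elements generate $U$; the minimality of $2a$ follows because the Frattini quotient $U/\Phi(U)$ has $\mathbf{F}_p$-dimension $2a$.

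The key steps are as follows. First, I would write down the presentation of the elementary abelian group $Z \cong (\mathbf{F}_q,+)$ on the $a$ commutators $z_r := [s_1(v_r),s_2(v_1)]$ — these correspond to $x_{13}(v_r)$ by the Chevalley commutator formula — with relations $z_r^p = 1$ and $[z_r,z_{r'}]=1$; that is a standard presentation on $a$ generators with $\binom{a}{2}+a$ relations. Second, I would present the quotient $U/Z \cong (\mathbf{F}_q,+)^2$ on the images of $s_1(v_k)$ and $s_2(v_k)$, again an elementary abelian presentation. Third, I would feed these two presentations into Lemma~\ref{lemma::Hall}: the conjugation relations $g_i n_j g_i^{-1} = V_{ij}$ become trivial because $Z$ is central (so $V_{ij} = n_j$), which is why relations (A3) appear — they say $s_i(v_1)$ commutes with the generators $[s_1(v_k),s_2(v_1)]$ of $Z$, and together with (A2)/(A4) this forces every $s_i(v_k)$ to centralize $Z$; and the "lifted relator" relations $W_i(g) = \widetilde W_i(n)$ are precisely the relations (A1) (powers), (A2) (commuting within a root group) and (A4) (the cross commutators $[s_1(v_k)^{-1},s_2(v_{k'})^{-1}]$ expressed in terms of the $z_r$ via the structure constants $c(k,k',r)$, which encode multiplication in $\mathbf{F}_q$ relative to the chosen basis). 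Finally I would count: Hall's recipe gives $\bigl(\binom{a}{2}+a\bigr)$ relations for $N=Z$, plus $2a$ conjugation relations made trivial by centrality, plus the relators of $H=U/Z$ rewritten; a careful bookkeeping — discarding redundancies and using that half the conjugation data is vacuous — collapses this to the stated $2a(a+1)$ relations in the four families (A1)–(A4).

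The main obstacle I expect is the bookkeeping in the last step: Lemma~\ref{lemma::Hall} as stated produces a presentation that is far from economical (it lists both $g_i n_j g_i^{-1}$ and $g_i^{-1} n_j g_i$, and a full set of relators for $H$), so the real work is showing that, after using centrality of $Z$ and the elementary-abelian structure of $U/Z$, one can cut down to exactly the families (A1)–(A4) without losing any consequence — in particular verifying that the commutator identities needed to move from an arbitrary word to the normal form $x_{12}(*)x_{13}(*)x_{23}(*)$ are all derivable from (A1)–(A4). A secondary technical point is pinning down the integers $c(k,k',r)$: they are determined by $v_k v_{k'} = \sum_r c(k,k',r)\, v_r$ in $\mathbf{F}_q$ (read mod $p$), and one must check this is consistent with the Chevalley commutator formula $[x_{12}(u),x_{23}(w)] = x_{13}(uw)$, including getting the signs/inverses in (A4) right. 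Once the normal form and its uniqueness ($|U| = q^3$) are established, the presentation is verified by the standard argument that the abstractly presented group surjects onto $U$ and has order at most $q^3$.
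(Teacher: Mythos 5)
The paper offers no internal argument for this lemma: its ``proof'' is the single sentence that this is Theorem~2 of Biss--Dasgupta \cite{kn::BD}. So there is nothing in the paper to compare your proof against step by step; your outline (realize $S$ as the unitriangular Heisenberg group, present the central extension $1\to Z\to U\to U/Z\to 1$, identify $c(k,k',r)$ via $v_kv_{k'}=\sum_r c(k,k',r)v_r$, and verify the normal form $x_{12}(*)x_{13}(*)x_{23}(*)$) is in substance the strategy of that reference, and the generator count via the Frattini quotient and the relation count $2a+a(a-1)+2a+a(a-1)=2a(a+1)$ are both correct.

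However, the proposal has a genuine gap exactly at the point you defer to ``careful bookkeeping.'' That reduction is the entire content of the lemma. In the abstract group defined by (A1)--(A4), set $z_r:=[s_1(v_r),s_2(v_1)]$. Relation (A3) only makes $s_1(v_1)$ and $s_2(v_1)$ centralize the $z_r$; to bound the order by $q^3$ you must \emph{derive} from (A1)--(A4) that every $s_i(v_k)$ centralizes every $z_r$, hence that $N:=\langle z_1,\dots,z_a\rangle$ is central --- in particular abelian, since $[z_r,z_{r'}]=1$ is not among the listed relations --- and that $|N|\leqslant q$ (the identities $z_r^p=1$ are also not listed and only follow from (A1) \emph{after} centrality is known, via $[x^p,y]=[x,y]^p$). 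You must likewise show that the cross-commutators with $k'=1$, which (A4) deliberately omits, land in $N$. Only then does Hall's Lemma~\ref{lemma::Hall} reasoning close up: as you note, Hall's recipe a priori gives a presentation on $3a$ generators with far more relations, and eliminating the $z_r$ and discarding relations is only legitimate once the discarded ones are shown to be consequences. These derivations are commutator-identity arguments of exactly the kind the paper does write out in the $C_2$ case (Claim~\ref{claim::1}, using the Hall--Witt identity); you should either import such arguments explicitly or, as the paper does, cite \cite[Theorem 2]{kn::BD} for them rather than present the step as bookkeeping.
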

\begin{proof} This is the proof of Theorem 2 of \cite{kn::BD}.
\end{proof}
Remark that for odd $p$,  the presentation of $S$ given above has not only minimal number of generators, but also minimal number of relations \cite[Theorem 3]{kn::BD}.

Notice that under a natural identification of $S$ with the unipotent radical of the standard Borel subgroup of ${\rm SL}_3(\mathbf{F}_q)$ (where the simple roots of $A_2$ are denoted by $\alpha_1$ and  $\alpha_2$), we may choose $s_1(v_k)$ and $s_2(v_{k})$ to correspond to $x_{\alpha_1}(v_k)$ and $x_{\alpha_2}(v_{k})$, $1\leqslant k\leqslant a$, where $v_1, \ldots , v_a$ are chosen to be some generators of $\mathbf{F}_q$, with $v_1=1$.

\subsubsection{Sylow $p$-subgroup of ${\rm Sp}_4(\mathbf{F}_q)$}

We will now discuss presentations of Sylow $p$-subgroups of ${\rm Sp}_4(\mathbf{F}_q)$, $q=p^a$, $a\geq 1$.
For our purposes either $p$ is odd, or  $p=2$ and $q\geq 16$.
Let $S$ be a Sylow $p$-subgroup of ${\rm Sp}_4(\mathbf{F}_q)$.

On the one hand, as ${\rm Sp}_4(\mathbf{F}_q)$ is identified with the universal version of the group $C_2(q)$, it has a natural Steinberg presentation \cite[Theorem 1.12.1]{kn::GLS3} and thus so does $S$.
Unfortunately, this presentation of $S$ has $4(q-1)$ generators and $16(q-1)^2$ relations.

On the other hand, as either $p\geqslant 3$,  or $p=2$ and $q\geq 16$, \cite[Th. 3.3.1]{kn::GLS3} (together with some additional calculations in the case when $p=2$) implies that in fact $S$ has a minimal set of generators  $\{x_{\alpha}(v_i), x_{\beta}(v_i)\mid 1\leqslant i\leqslant a\}$ where $v_1=1$ and $v_i$ ($2\leqslant i\leqslant a$) generate $\mathbf{F}_q$ (here $\alpha$ and $\beta$ are simple roots of $C_2$, with $\alpha$ being a short root and $\beta$ a long one).
In particular,  $S$ is a group of order $q^4=p^{4a}$ with $d(S)=2a$.
We may now apply \cite[Prop. 3.4.1]{kn::LSe} to conclude that $S$ has a presentation with 
$2a$ generators and $8a^2$ relations.

However shorter presentations of $S$ exist and would give more precise estimates for us. 
The reader who is not concerned with this difference is welcome to skip the rest of this subsection.
We will now produce a presentation of $S$ (in the case when $p$ is odd) on $2a$ generators and with $\frac{7a^2+13a}{2}$ relations.
Using this technique, one can also produce a shorter presentation in the case when $p=2$.  However, the calculation is lengthy, and we decided
to demonstrate how it can be done (and get a good estimate) only for $p\geq 3$.
The difference in those two cases happens because of the redundancies in the commutator relations for $p=2$. 

\begin{lemma}
\label{lemma::Sp_4}
Let $S$ be a Sylow $p$-subgroup of the finite group ${\rm Sp}_4(\mathbf{F}_q)$ where $q=p^a$ for $a\geqslant 1$ and $p$ is odd.
Then  $|S|=q^4=p^{4a}$, the minimal number of generators of $S$ is $2a$, and  $S$ has the following presentation on $2a$ generators and with $\frac{7a^2+13a}{2}$ relations.

There are elements $x_{\alpha}(v_k), x_{\beta}(v_{k}) \in S$,  $1\leqslant k\leqslant a$, that generate $S$. For $1\leq i\leq a$, set
$$\xab(v_i)=[\xb(v_1),\xa(v_i)]\prod_{k=1}^a[\xa(\frac{1}{2}v_k), [\xb(v_1),\xa(v_1)]]^{r_k}$$
$$\mbox{and}\ \ \xaa(v_i)=[\xa(\frac{1}{2}v_i),\xab(v_1)]$$ 

\noindent where for each $i$, the element $\xa(\frac{1}{2}v_i)=\Pi_{j=1}^a \xa(v_j)^{m(j,i)}$  with some $m(j,i)\in\mathbf{Z}$, $1\leqslant j\leqslant a$.

 Then $S$ has a presentation with the generators  $x_{\alpha}(v_k), x_{\beta}(v_{k}) \in S$,  $1\leqslant k\leqslant a$, and  the following relations:
\begin{description}
\item[(C1)] $(\xa(v_k))^p=1$, $(\xb(v_k))^p=1$ and $(\xab(v_k))^p=1$ for $1\leqslant k\leqslant a$, 
              \item[(C2)] $[\xa(v_k),\xa(v_{k'})]=1$ for $1\leqslant k < k'\leqslant a$,
        \item[(C3)] $[\xb(v_k),\xb(v_{k'})]=1$ for $1\leqslant k < k'\leqslant a$,
        \item[(C4)] $[\xab(v_k),\xab(v_{k'})]=1$ for $1\leqslant k< k'\leqslant a$,
        \item[(C5)] $[\xa(v_1),[\xa(v_k),\xab(v_1)]]=[\xab(v_1),[\xa(v_k),\xab(v_1)]]=1$ for $1\leqslant k\leqslant a$,
        \item[(C6)] $[\xa(v_k)^{-1},\xab(v_{k'})^{-1}]=\prod_{1\leqslant r\leqslant a} [\xa(v_r),\xab(v_1)]^{c(k,k',r)}$ with some fixed \\ $c(k,k',1), \ldots, c(k,k',a)\in\mathbf{Z}$, for $1\leqslant k\leqslant a$ and $2\leqslant k' \leqslant a$. 
\item[(C7)] $[\xab(v_1), \xb(v_i)]=1$ for $1\leqslant i\leqslant a$,
\item[(C8)]  $[\xab(v_i),\xb(v_1)]=1$ for $1\leqslant i\leqslant a$,
\item[(C9)] $\xaa(v_i^2)^{-1}=\Pi_{k=1}^a\xaa(v_k)^{r_k}$ for all $1\leqslant i\leqslant a$ where $r_k\in\mathbf{Z}$,
\item[(C10)]  $[\xb(v_1),\xa(v_i)]=\xab(v_i)\xaa(v_i^2)$ for $1\leqslant i\leqslant a$.
\item[(C11)] $ [\xa(v_j)^{-1},\xb(v_i)^{-1}]=\xab(v_iv_j)\xaa(v_iv_j^2)^{-1}$ for $1\leqslant i,j\leqslant a$,\\ where 
 $\xab(v_iv_j)=\Pi_{k=1}^a\xab(v_k)^{d(i,j,k)}$ and $\xaa(v_iv_j^{2})=\Pi_{k=1}^a\xaa(v_k)^{f(i,j,k)}$ 
with some $d(i,j,k), f(i,j,k)\in\mathbf{Z}$. 

\end{description}
\end{lemma}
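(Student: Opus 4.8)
The plan is to realize $S$ as an iterated extension along the root subgroup filtration of the unipotent radical $U$ of a Borel subgroup of $\mathrm{Sp}_4(\mathbf F_q)$, and to apply P.~Hall's Lemma~\ref{lemma::Hall} at each stage, carefully pruning redundant relations. Recall that for $C_2$ the positive roots are $\alpha,\beta,\alpha+\beta,2\alpha+\beta$ with $\alpha$ short and $\beta$ long, and the subgroup $V=\la \xab(v),\xaa(v)\ra$ spanned by the two non-simple root subgroups is central in $U$ (indeed $U/V$ is identified with a Sylow $p$-subgroup of $\mathrm{SL}_3(\mathbf F_q)$ once one remembers that the $A_2$-subsystem $\{\alpha',\beta'\}$ with $\alpha'=\alpha$, $\beta'=\alpha+\beta$ sits inside). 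So the first step is to fix notation: choose the $\mathbf F_p$-basis $v_1=1,v_2,\dots,v_a$ of $\mathbf F_q$, write down the Chevalley commutator formulas for $C_2$ in char $p$ odd (the structure constants involve the factor $2$, which is why $\xa(\tfrac12 v_i)$ appears and why the argument needs $p$ odd), and record that $V\cong \mathbf F_q^{(\alpha+\beta)}\times \mathbf F_q^{(2\alpha+\beta)}$ is elementary abelian of rank $2a$.

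Second, I would handle the bottom layer. Let $W=\la \xa(v_k),\xab(v_k):1\le k\le a\ra$; this is exactly a copy of the Sylow $p$-subgroup of $\mathrm{SL}_3(\mathbf F_q)$ (with $\alpha$ playing the role of $\alpha_1$ and $\alpha+\beta$ the role of $\alpha_2$), so Lemma~\ref{lemma::BD} gives it on $2a$ generators with $2a(a+1)$ relations --- these are precisely (C1) (the $\xa$ and $\xab$ parts), (C2), (C4), (C5), (C6). Third, extend by the central subgroup generated by the $\xaa(v_k)$: since $2\alpha+\beta$ is the highest root, $\la\xaa(v)\ra$ is central in $U$, killed by $p$, and of rank $a$ over $\mathbf F_p$ but generated as a normal subgroup of $U$ by fewer elements once one exploits the commutator $[\xa(\tfrac12 v),\xab(v_1)]=\xaa(v^2)$ and the fact that $v\mapsto v^2$ is a bijection on $\mathbf F_q$ for $p$ odd; this is what lets one replace a full basis of relations for this layer by the relations (C9) plus the definition of $\xaa(v_i)$ as a specified word. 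Fourth, extend by the $\xb$-layer: $U=\la W,\xaa(v_k)\ra \rtimes$-type extension by $\la\xb(v_k):1\le k\le a\ra$, and Hall's lemma demands relations of three kinds --- relations among the new generators (that is (C3) and the $\xb$-part of (C1)), conjugation rules $\xb(v_i)\,\xa(v_j)\,\xb(v_i)^{-1}=\cdots$ and $\xb(v_i)\,\xab(v_j)\,\xb(v_i)^{-1}=\cdots$ and similarly with $\xaa$, and lifted relations coming from the presentation of $W\la\xaa\ra$. The conjugation rules are computed from the Chevalley commutator formula: $[\xb,\xa]$ produces $\xab$ and $\xaa$ (this is (C10) in the normalized form, and (C11) in full bilinear form), while $\xb$ commutes with $\xab$ and with $\xaa$ (relations (C7), (C8), noting $2\alpha+\beta$ is highest so central). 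The real work of the lemma is then the bookkeeping: showing that out of the $2a$ conjugation relations of type $\xb\cdot\xa$ (which a priori would be $a^2$ relations, one for each pair $(i,j)$) one needs only the $a$ relations (C10) with $j$ fixed $=1$ plus the structural relations already present, because the general $(i,j)$ relation (C11) is derivable --- and likewise that the lifted relations from the $W$-presentation are already consequences. Counting: (C1) gives $3a$, (C2)+(C3)+(C4) give $3\binom a2=\tfrac{3a^2-3a}{2}$, (C5) gives $2a$, (C6) gives $a(a-1)$, (C7)+(C8) give $2a$, (C9) gives $a$, (C10) gives $a$, (C11) contributes the remaining count; summing and simplifying should land on $\tfrac{7a^2+13a}{2}$, and matching this total exactly is the arithmetic I would do last.

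The main obstacle I expect is the redundancy analysis in the fourth step: proving that the displayed relations actually \emph{suffice}, i.e. that every relation produced by a naive application of Hall's lemma (all $a^2$ conjugation identities $\xb(v_i)\xa(v_j)\xb(v_i)^{-1}=\xa(v_j)\cdot(\text{central correction})$, all lifted relations $\widetilde W_i$, and the full bilinear family (C11) for \emph{all} pairs rather than a generating subfamily) is a consequence of (C1)--(C11). Concretely one must show that, modulo the already-imposed relations, the map $(v_i,v_j)\mapsto [\xb(v_j)^{-1},\xa(v_i)^{-1}]$ is bi-additive with values in the central group $\la\xab,\xaa\ra$, so that knowing it on $v_j=v_1=1$ (relations (C10)) and the additivity forced by (C2),(C3),(C4),(C7),(C8) determines it everywhere --- and that the $\xaa$-component, which is \emph{quadratic} in the $\alpha$-variable because of the short/long root interaction, is pinned down by (C9) together with the identity $\xaa(v^2)=[\xa(\tfrac12 v),\xab(v_1)]$. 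This quadratic-versus-linear mismatch, and the fact that $\tfrac12$ only makes sense for $p$ odd, is exactly the point flagged in the text as the source of the $p=2$ complications; for $p$ odd it is manageable but requires a genuine (if routine) verification using the Hall--Witt identity and the commutator formulas. Once that derivation is in place, the generator-minimality $d(S)=2a$ follows since $S/\Phi(S)$ has $\mathbf F_p$-dimension $2a$ (the two simple root subgroups survive in the Frattini quotient, the non-simple ones do not), and $|S|=q^4$ is immediate from the root space decomposition.
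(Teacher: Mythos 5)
Your proposal follows essentially the same route as the paper's proof: write $S=S_0\rtimes X_\beta$ with $S_0=X_\alpha X_{\alpha+\beta}X_{2\alpha+\beta}$ the Sylow $p$-subgroup of ${\rm SL}_3(\mathbf{F}_q)$ presented via Lemma~\ref{lemma::BD}, glue on the $X_\beta$-layer with P.~Hall's lemma, and then use the Hall--Witt identity together with the (bi-)additivity of the commutator maps to show that the remaining conjugation relations are consequences of (C1)--(C11); your relation count also comes out to $\frac{7a^2+13a}{2}$. The one wobble is your passing claim that the full family (C11) is derivable from the normalized relations (C10) --- in the paper (C11) is retained in full (all $a^2$ of them, which is exactly what makes the count close), and what is actually derived are the reverse conjugations $\xb\xa\xb^{-1}$ and the $\xb$--$\xab$ conjugations --- but your ``main obstacle'' paragraph then describes the verification task consistently with this, so the approach is the same.
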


\vskip 3mm
The rest of the section deals with the proof of Lemma~\ref{lemma::Sp_4}.
\vskip 1mm
\begin{proof}
Let $S$ be a Sylow $p$-subgroup of ${\rm Sp}_4(\mathbf{F}_q)$, $q=p^a$, $a\geq 1$ and $p$ odd. Recall that  $|S|=q^4=p^{4a}$ and in the Lie theoretic notation, $S$ is a product of four root subgroups $X_{\alpha}$, $X_{\beta}$, $X_{\alpha+\beta}$ and $X_{2\alpha+\beta}$.   Each root subgroup $X_{\gamma}\cong (\mathbf{F}_q, +)$ and  is generated by root elements  $x_{\gamma}(v_i)$ for some $v_i$'s , $1\leq i\leq a$.

First of all  notice that $S$ is a semidirect product of its normal subgroup $S_0=X_{\alpha}X_{\alpha+\beta}X_{2\alpha+\beta}$ of order $q^3$  and a subgroup $S_1=X_{\beta}$ of order $q$.
The former one is generated by $\xa(v_i)$ and $\xab(v_i)$ (for $1\leqslant i\leqslant a$) and is isomorphic to a Sylow $p$-subgroup of ${\rm SL}_3(\mathbf{F}_q)$.
The latter one is a root subgroup of $S$ generated by $\xb(v_i)$ (for $1\leqslant i\leqslant a$).
To present $S$, we are going to use the presentation of $S_0$ from Lemma~\ref{lemma::BD}, then take an obvious presentation of $S_1$ and finally use P.~Hall's lemma (Lemma~\ref{lemma::Hall} above) to ``glue" those two presentations together in order to obtain a presentation of $S$.
We will then add few relations (that hold in $S$) and show that some of the relations (in this newly obtained presentation) are redundant and can be obtained as a consequence of other relations. 

We  begin by recording a combined list of generators and relations for a presentation of $S_0$ (given in Lemma~\ref{lemma::BD}) and for a presentation of $S_1\cong (\mathbf{F}_q,+)$:
\begin{description}
\item{$\bf{Generators}$:}
$\xa(v_k)$,  $\xab(v_k)$ and $\xb(v_k)$ with $1\leqslant k\leqslant a$.
\item{$\bf{Relations}$:}
        \begin{enumerate}
                \item For $1\leqslant k\leqslant a$, 
                \begin{enumerate} 
                \item $(\xa(v_k))^p=1$,  $(\xb(v_k))^p=1$, and
                \item $(\xab(v_k))^p=1$,
                \end{enumerate} 
        \item $[\xa(v_k),\xa(v_{k'})]=1$ for $1\leqslant k < k'\leqslant a$,
        \item $[\xb(v_k),\xb(v_{k'})]=1$ for $1\leqslant k < k'\leqslant a$,
        \item $[\xab(v_k),\xab(v_{k'})]=1$ for $1\leqslant k< k'\leqslant a$,
        \item $[\xa(v_1),[\xa(v_k),\xab(v_1)]]=[\xab(v_1),[\xa(v_k),\xab(v_1)]]=1$ for $1\leqslant k\leqslant a$,
        \item $[\xa(v_k)^{-1},\xab(v_{k'})^{-1}]=\prod_{1\leqslant r\leqslant a} [\xa(v_r),\xab(v_1)]^{c(k,k',r)}$ with some fixed \\ $c(k,k',1), \ldots, c(k,k',a)\in\mathbf{Z}$, for $1\leqslant k\leqslant a$ and $2\leqslant k' \leqslant a$. 
\vskip 1mm
\end{enumerate}

Let us now  introduce the following notation that will make the next step more comfortable. It comes from the Steinberg notation. Let

$$\xaa(v_i)=[\xa(\frac{1}{2}v_i),\xab(v_1)]\ \ \mbox{ for}\ \  1\leqslant i\leqslant a$$ where $\xa(\frac{1}{2}v_i)=\Pi_{j=1}^a \xa(v_j)^{m(j,i)}$ for all $1\leqslant i\leqslant a$ with some $m(j,i)\in\mathbf{Z}$, $1\leqslant j\leqslant a$.

To use P.~Hall's lemma, we  need  the following additional relations to hold:

\begin{description}
\item[(A)] $\xb(v_i)^{-1}\xa(v_j)\xb(v_i)=R'_{ij}$  which is equivalent to\\ $ [\xa(v_j)^{-1},\xb(v_i)^{-1}]=\xab(v_iv_j)\xaa(v_iv_j^2)^{-1}$ for $1\leqslant i,j\leqslant a$,
\item[(B)] $\xb(v_i)\xa(v_j)\xb(v_i)^{-1}=R_{ij}$ which is equivalent to\\ $[\xa(v_j)^{-1},\xb(v_i)]=\xab(v_iv_j)^{-1}\xaa(v_iv_j^2)$ for  $1\leqslant i,j\leqslant a$,
\item[(C)] $ \xb(v_i)\xab(v_j)\xb(v_i)^{-1}=\xab(v_j)$ for $1\leqslant i,j\leqslant a$,
\item[(D)] $ \xb(v_i)^{-1}\xab(v_j)\xb(v_i)=\xab(v_j)$ for  $1\leqslant i,j\leqslant a$.
\end{description}
\end{description}
In (A) and (B) above, $\xab(v_iv_j)=\Pi_{k=1}^a\xab(v_k)^{d(i,j,k)}$ and $\xaa(v_iv_j^{2})=\Pi_{k=1}^a\xaa(v_k)^{f(i,j,k)}$ 
for all $1\leqslant i,j\leqslant a$ with some $d(i,j,k), f(i,j,k)\in\mathbf{Z}$. 

By Lemma~\ref{lemma::Hall}, the group $S$ has a presentation with $3a$ generators (listed above) and in which the relations are given by (1)--(6) and (A)--(D). 

Let us now observe that the following relations also hold in $S$. They are the consequences of relations (C) and (D) and the Steinberg relations in  ${\rm Sp}_4(\mathbf{F}_q)$ (cf. \cite[Th 1.12.1]{kn::GLS3}).

\begin{description}
\item[$7.$] $[\xab(v_1), \xb(v_i)]=1$ for $1\leqslant i\leqslant a$,
\item[$8.$]  $[\xab(v_i),\xb(v_1)]=1$ for $1\leqslant i\leqslant a$,
\item[$9.$] $\xaa(v_i^2)^{-1}=\Pi_{k=1}^a\xaa(v_k)^{r(k,i)}$ for all $1\leqslant i\leqslant a$ where $r(k,i)\in\mathbf{Z}$,
\item[$10.$]  $[\xb(v_1),\xa(v_i)]=\xab(v_i)\xaa(v_i^2)$ for $1\leqslant i\leqslant a$.
\end{description}

Consider the combined list of relations (1)--(10) and (A), (B), (C), (D). They all hold in $S$ and give a presentation of $S$ on the set of $3a$ generators given above.
We will now show that in fact  we may reduce the list of generators and that relations (B), (C) and (D) follow from (1)--(10) and (A).

In what will follow the following commutator identity will be very useful:
$$[a,bc]=[a,b][a,c]^b\ \ (*)$$

First of all, notice that  (9) and (10) together with the identity $(*)$  imply that for $1\leqslant j\leqslant a$, we have: 

\smallskip

\centerline{$\xab(v_i)=[\xb(v_1),\xa(v_i)]\prod_{k=1}^a[\xa(\frac{1}{2}v_k), [\xb(v_1),\xa(v_1)]]^{r(k,i)}$}

\smallskip

\noindent where for each $k$, the element $\xa(\frac{1}{2}v_k)$ is expressed in terms of $\xa(v_i)$'s  and $\xb(v_i)$'s, $1\leqslant i\leqslant a$, as above. In particular, we may remove $\xab(v_i)$, $1\leqslant i\leqslant a$, from the list of generators.


Now, let us prove the following statement.

\begin{claim}
\label{claim::1}
The following relations follow from the relations (1)--(10) and $(A)$:
\begin{enumerate}
\item $[\xb(v_j),\xaa(v_i)]=1$ for all $1\leqslant i,j\leqslant a$.
\item $[[\xa(v_i),\xb(v_1)],\xb(v_j)]=1$ for $1\leqslant i,j\leqslant a$.
\item $[\xab(v_i),\xb(v_j)]=1$ for $1\leqslant i,j\leqslant a$, i.e.,  relations $(C)$ and $(D)$.
\item  relations $(B)$.
\end{enumerate}
\end{claim}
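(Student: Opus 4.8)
The plan is to prove the four assertions in the stated order, since each later one relies on the earlier ones, and to work throughout inside the group presented by the relations (1)--(10) and (A)--(D), using the Steinberg commutator structure of $S$ only insofar as it is already encoded in those relations. The main tool will be the identity $(*)$ together with its mirror $[ab,c]=[a,c]^b[b,c]$ and the Hall--Witt identity. The underlying heuristic is that $\xb$ commutes with everything in $X_{\alpha+\beta}X_{2\alpha+\beta}$ (the ``top'' of $S_0$), so all four statements are really unravellings of relations (7), (8), (C), (D) in terms of the reduced generating set $\{\xa(v_k),\xb(v_k)\}$.

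\emph{Assertion (1): $[\xb(v_j),\xaa(v_i)]=1$.} Here I would expand $\xaa(v_i)=[\xa(\tfrac12 v_i),\xab(v_1)]$ and push $\xb(v_j)$ through the commutator. Using $(*)$ twice, $[\xb(v_j),[\xa(\tfrac12 v_i),\xab(v_1)]]$ is a product of iterated commutators of $\xb(v_j)$ with $\xa(\tfrac12 v_i)$ and with $\xab(v_1)$; the terms involving $\xab(v_1)$ vanish by relation (7) (after conjugation, which only reorders factors), and the terms $[\xb(v_j),\xa(\tfrac12 v_i)]$ are, via relations (B)/(A) expressed through $\xab$ and $\xaa$, again killed once one knows they are central enough. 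To avoid circularity I would instead observe that $\xaa(v_i)$ lies in $X_{2\alpha+\beta}$, which by relations (A3)/(C5)-type centrality and (9)--(10) is central in $S_0$, and then use relations (7)--(8) which say $\xb$ centralizes $X_{\alpha+\beta}$; combining, $\xb$ centralizes the subgroup generated by $X_{\alpha+\beta}$ and $X_{2\alpha+\beta}$, which contains all $\xaa(v_i)$.

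\emph{Assertions (2)--(3): $[[\xa(v_i),\xb(v_1)],\xb(v_j)]=1$ and then $[\xab(v_i),\xb(v_j)]=1$.} By relation (10), $[\xb(v_1),\xa(v_i)]=\xab(v_i)\xaa(v_i^2)$, so $[[\xa(v_i),\xb(v_1)],\xb(v_j)]=[(\xab(v_i)\xaa(v_i^2))^{-1},\xb(v_j)]$. Expanding with $[ab,c]=[a,c]^b[b,c]$ and invoking assertion (1) to drop the $\xaa$-part, this reduces to showing $[\xab(v_i),\xb(v_j)]=1$ for all $i,j$ --- which is exactly assertion (3), and which in turn contains relations (C) and (D) as the special cases after rewriting $g^{-1}hg=h$ as $[h^{-1},g^{-1}]=1$ etc. So (2) and (3) are equivalent modulo (1), and I would prove (3) directly: write $\xab(v_i)$ via the boxed displayed formula (derived above from (9), (10), $(*)$) as $[\xb(v_1),\xa(v_i)]$ times a product of $[\xa(\tfrac12 v_k),[\xb(v_1),\xa(v_1)]]^{r(k,i)}$; the second factors are powers of $\xaa(v_k)$ hence centralized by $\xb(v_j)$ by (1), so it remains to show $[[\xb(v_1),\xa(v_i)],\xb(v_j)]=1$. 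Apply Hall--Witt to the triple $(\xb(v_1),\xa(v_i),\xb(v_j))$: two of the three cyclic terms involve $[[\xa(v_i),\xb(v_j)^{\pm}],\xb(v_1)^{\pm}]$, and $[\xa(v_i),\xb(v_j)]$ is given by relation (A)/(B) as an element of $X_{\alpha+\beta}X_{2\alpha+\beta}$, on which $\xb(v_1)$ acts trivially by (7)--(8) and (1); the third term is $[[\xb(v_j),\xb(v_1)^{-1}],\dots]$ which dies by relation (3). Hence the product is trivial, giving (3) and therefore (2).

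\emph{Assertion (4): relations (B) follow from (1)--(10) and (A).} Relation (A) reads $[\xa(v_j)^{-1},\xb(v_i)^{-1}]=\xab(v_iv_j)\xaa(v_iv_j^2)^{-1}$ and relation (B) reads $[\xa(v_j)^{-1},\xb(v_i)]=\xab(v_iv_j)^{-1}\xaa(v_iv_j^2)$. I would get one from the other by conjugating relation (A) by $\xb(v_i)$ and using the commutator identity $[x,y^{-1}]=\bigl([x,y]^{-1}\bigr)^{y^{-1}}$: conjugating (A) by $\xb(v_i)$ turns $[\xa(v_j)^{-1},\xb(v_i)^{-1}]$ into $[\xa(v_j)^{-1}{}^{\,\xb(v_i)},\xb(v_i)^{-1}]$, and $\xa(v_j)^{\xb(v_i)}$ is rewritten by (A) itself back in terms of the generators; on the right-hand side, $\xab(v_iv_j)^{\xb(v_i)}=\xab(v_iv_j)$ by (3) and $\xaa(v_iv_j^2)^{\xb(v_i)}=\xaa(v_iv_j^2)$ by (1), so the right-hand side is unchanged. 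Matching the two sides and cancelling yields exactly (B). The same conjugation argument, applied to relation (C) instead of (A), re-derives (D), completing the reduction.

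The step I expect to be the genuine obstacle is assertion (3) (equivalently (2)): establishing $[\xab(v_i),\xb(v_j)]=1$ for \emph{all} $i,j$ from the mere special instances (7) ($i=1$, all $j$) and (8) (all $i$, $j=1$) requires the Hall--Witt manoeuvre above to be carried out with care about which conjugates and inverses appear, and one must be sure that every auxiliary commutator produced lands in the already-controlled subgroup $X_{\alpha+\beta}X_{2\alpha+\beta}$ rather than re-introducing an uncontrolled $[\xa,\xb]$ term. Everything after that --- assertions (1), (2), (4) --- is bookkeeping with $(*)$ and conjugation, of the ``obvious suspects'' type.
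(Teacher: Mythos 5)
Your overall architecture (prove (1) first, reduce (2) and (3) to each other via (9)--(10) and $(*)$, then get (B) by the conjugation/inversion identity) matches the paper, and your treatment of assertion (4) is essentially the paper's argument. But the proofs you actually commit to for assertions (1) and (3) each contain a genuine gap. For (1), your ``non-circular'' alternative rests on the statement that relations (7)--(8) say $\xb$ centralizes $X_{\alpha+\beta}$. They do not: (7) and (8) only give $[\xab(v_1),\xb(v_j)]=1$ and $[\xab(v_i),\xb(v_1)]=1$, i.e.\ the cases where one index equals $1$; the full statement ``$\xb(v_j)$ centralizes every $\xab(v_i)$'' is precisely assertion (3), which is proved \emph{after} (1) and \emph{using} (1). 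Moreover, even granting that premise, centrality of $X_{2\alpha+\beta}$ \emph{in $S_0$} says nothing about how $\xb(v_j)\notin S_0$ acts on it, so ``combining'' the two facts does not yield $[\xb(v_j),\xaa(v_i)]=1$. The route that actually works is the one you sketched first and then abandoned: apply Hall--Witt to $y=\xa(\tfrac12 v_i)$, $x=\xab(v_1)^{-1}$, $z=\xb(v_j)^{-1}$; one term dies by (7) alone, and the other unwanted term dies because relation (A) forces $[\xb(v_j)^{-1},\xa(\tfrac12 v_i)^{-1}]$ into the abelian normal subgroup $H_0=\langle \xab(v_k),\xaa(v_k)\rangle$ of $S_0$, which contains $\xab(v_1)$. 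No circularity arises because only (7), (A) and the internal structure of $S_0$ are used.

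For assertion (3), your Hall--Witt bookkeeping is off: with the triple $(\xb(v_1),\xa(v_i),\xb(v_j))$ exactly one of the three cyclic terms is (a conjugate of) the target $[[\xb(v_1),\xa(v_i)],\xb(v_j)]$, one is of the form $[[\xa(v_i),\xb(v_j)^{\pm}],\xb(v_1)^{\pm}]$, and one is $[[\xb(v_j),\xb(v_1)^{\pm}],\xa(v_i)^{\pm}]$. As written you argue that all three terms vanish, which makes the identity read $1=1$ and proves nothing; you must leave the target term alive and kill only the other two. In addition, killing the middle term requires knowing that $[\xa(v_i)^{-1},\xb(v_j)]$ (not $[\xa(v_i)^{-1},\xb(v_j)^{-1}]$) lands in something centralized by $\xb(v_1)$; since relation (B) is not yet available, this needs the conjugation identity $[x,y]=\bigl({}^{y}[x,y^{-1}]\bigr)^{-1}$ together with (A), then specifically (8) (for the $\xab(v_k)$ factors) and part (1) (for the $\xaa(v_k)$ factors) --- citing ``(7)--(8)'' wholesale again smuggles in the unproved general case. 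The paper sidesteps this entirely by first proving assertion (2) through a direct elementary rewriting of $\xb(v_j)[\xa(v_i),\xb(v_1)]$ using (A), part (1), relation (8) and relation (3), and only then deducing (3) from (2), (1), (9) and (10) via $(*)$; if you prefer the Hall--Witt route you must redo the term accounting along the lines above.
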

\begin{proof}
1.Observe that  $[\xaa(v_i),\xb(v_j)]=[[\xa(\frac{1}{2}v_i),\xab(v_1)],\xb(v_j)]$ for $1\leqslant i,j,\leqslant a$.
Recall now  the Hall-Witt identity formulated for our definition of the commutators:
$$[[y,x^{-1}],z^{-1}]^{y^{-1}}[[z,y^{-1}],x^{-1}]^{z^{-1}}[[x,z^{-1}],y^{-1}]^{x^{-1}}=1$$
Let us apply  it with $y=\xa(\frac{1}{2}v_i)$, $x=\xab(v_1)^{-1}$ and $z=\xb(v_j)^{-1}$.
First notice that $[[x,z^{-1}],y^{-1}]=[[\xab(v_1)^{-1},\xb(v_j)],\xa(\frac{1}{2}v_i)^{-1}]=[1, \xa(\frac{1}{2}v_i)^{-1}]=1$  because of  (7).
Now, $[[z,y^{-1}],x^{-1}]=[[\xb(v_j)^{-1},\xa(\frac{1}{2}v_i)^{-1}],\xab(v_1)]=[[\xb(v_j)^{-1},\Pi_{k=1}^a \xa(v_k)^{-m(i,k)}],\xab(v_1)]$ by (2).
Let us look more closely at $[\xb(v_j)^{-1},\Pi_{k=1}^a \xa(v_k)^{-m(i,k)}]$. Using repeatedly the commutator identity $(*)$ together with relation $(A)$ and the fact that the subgroup $H_0:=\langle \xab(v_i), \xaa(v_i), 1\leqslant i\leqslant a\rangle$ of $S_0$  is normal in $S_0$, we see that $[\xb(v_j)^{-1},\xa(\frac{1}{2}v_i)^{-1}]\in H_0$ for all $1\leqslant i, j\leqslant a$. Now the structure of  $S_0$ tells us that $H_0$ is abelian and as $\xab(v_1)\in H_0$, we conclude that $[[\xb(v_j)^{-1},\xa(\frac{1}{2}v_i)^{-1}],\xab(v_1)]=1$.  \footnote{We may use the facts about the  structure of $S_0$ since $S_0$ is given by relations (1), (2), (4), (5) and (6), and (9) holds in $S_0$.}
At last the Hall-Witt identity gives us that $[[\xa(\frac{1}{2}v_i),\xab(v_1)],\xb(v_j)]=1$ for $1\leqslant i,j\leqslant a$, which implies the desired result.
\vskip 1mm

2. Our proof follows a similar one in \cite{kn::BD}. We have that\\
$\xb(v_j)[\xa(v_i),\xb(v_1)]= \xa(v_i)\xb(v_j)\xb(v_j)^{-1}\xa(v_i)^{-1}\xb(v_j)\xa(v_i)\xb(v_1)\xa(v_i)^{-1}\xb(v_1)^{-1}=$
$=\xa(v_i)\xb(v_j)[\xb(v_j)^{-1},\xa(v_i)^{-1}]\xb(v_1)\xa(v_i)^{-1}\xb(v_1)^{-1}$.

Since $[\xb(v_j)^{-1},\xa(v_i)^{-1}]=[\xa(v_i)^{-1},\xb(v_j)^{-1}]^{-1}$, using relations $(A)$ together with part 1 and relations (8),
we observe that $[\xb(v_j)^{-1},\xa(v_i)^{-1}]\xb(v_1)=\xb(v_1)[\xb(v_j)^{-1},\xa(v_i)^{-1}]$.  As furthermore, $\xb(v_1)$ commutes with $\xb(v_j)$, we have that\\
$\xb(v_j)[\xa(v_i),\xb(v_1)]=\xa(v_i)\xb(v_1)\xb(v_j)\xb(v_j)^{-1}\xa(v_i)^{-1}\xb(v_j)\xa(v_i)\xa(v_i)^{-1}\xb(v_1)^{-1}=$
$\xa(v_i)\xb(v_1)\xa(v_i)^{-1}\xb(v_j)\xb(v_1)^{-1}=\xa(v_i)\xb(v_1)\xa(v_i)^{-1}\xb(v_1)^{-1}\xb(v_j)=[\xa(v_i),\xb(v_1)]\xb(v_j)$.

\vskip 1mm

3. From (10) and (9) we have   $$\xab(v_i)=[\xb(v_1),\xa(v_i)]\xaa(v_i^2)^{-1}=[\xb(v_1),\xa(v_i)]\Pi_{k=1}^a\xaa(v_k)^{r(k,i)}$$
As $[a,bc]=[a,b][a,c]^b$,
$[\xb(v_j), \xab(v_i)]=[\xb(v_j), [\xb(v_1),\xa(v_i)]\Pi_{k=1}^a\xaa(v_k)^{r(k,i)}]=$\\
$=[\xb(v_j), [\xb(v_1),\xa(v_i)]][\xb(v_j),\Pi_{k=1}^a\xaa(v_k)^{r(k,i)}]^{[\xb(v_1),\xa(v_i)]}$.

Using part 2, we see that  $[\xb(v_j), [\xb(v_1),\xa(v_i)]]=1$, and from part 1 it follows that the commutator $[\xb(v_j),\Pi_{k=1}^a\xaa(v_k)^{r(k,i)}]=1$.  This gives (C).
Now notice that $(D)$ is an immediate consequence of $(C)$.

4. Finally we observe that relations $(A)$ together with parts 1 and 3  imply that
$\xb(v_i)^{-1}$ commutes with $[\xa(v_j)^{-1},\xb(v_i)^{-1}]$.
Therefore by \cite[Lemma 2]{kn::BD} we have:
$$[\xa(v_j)^{-1},\xb(v_i)]=[\xa(v_j)^{-1},\xb(v_i)^{-1}]^{-1},$$
which in turn (using the relations in $S_0$) implies relations $(B)$. 
\end{proof}

Thus we have shown that $S$ has a presentation with the generators $\xa(v_k)$ and $\xb(v_k)$, $1\leqslant k\leqslant a$, subject to the relations (1)--(10) and (A).
Rename the relations (1)--(10) into (C1)--(C10) and relations (A) into (C11).
Finally,  counting the number of those relations, we obtain a presentation of $S$ on $2a$ generators and the promised number of relations.
\end{proof}

\subsection{Kac-Moody groups: presentation of $\Gamma=U_+$}

Recall that $\overline{G}={\bf G}\bigl(\mathbf{F}_q(\!(t)\!)\bigr)$ can be thought of as a topological Kac-Moody group.
Since the rank of it as a Chevalley group is $l$, its Kac-Moody rank is $l+1$.
Then ${\bf G}(\mathbf{F}_q[[t]])$ can be naturally identified with a maximal parahoric subgroup and $P$ with the pro-unipotent radical  $\overline{U}_+$ of the
standard Iwahori  subgroup $\overline{B}_+$.
Thus  $P=\overline{U}_+$  is the closure of $U_+$ in $\overline{G}$ where $U_+$ is the subgroup of the corresponding minimal Kac-Moody group $\widetilde{G} = {\bf G}({\bf F}_q[t,t^{-1}])$ generated by the positive real root subgroups of $\widetilde{G}$ \cite[Theorem 1.C]{RemRon}.

In the notation of \S\ref{ss - pro-p completion}, we apply Lemma \ref{key_lemma}~with $\Gamma = U_+$: this tells us that $P$ is the pro-$p$ completion of the group $U_+$ which, in what follows, is better understood than $P$ thanks to a combination of \S\ref{ss - unipotent finite} and of Kac-Moody arguments which we explain in the rest of the subsection.
We will then deduce a presentation of $P$ from a presentation of $\Gamma = U_+$ because $P = \Gamma_{\hat{p}}$.

Let us go first a bit deeper into Kac-Moody theory. 
Let $A$ be the generalized Cartan matrix of $\widetilde{G}$ and $\pi=\{\alpha_0, \alpha_1, \ldots , \alpha_{l}\}$ be the set of its fundamental roots.
Corollary 1.2 of  \cite{kn::DM}  implies that if $A$ is $3$-spherical and $q\geqslant 16$, then $U_+$ is an amalgamated product of the system $\{X_{\alpha_i}\}\cup\{X_{\alpha_i,\alpha_j}\}$, $0\leqslant i,j\leqslant l$, where each $X_{\alpha_i}=\langle x_{\alpha_i}(c)\mid c\in \mathbf{F}_q\rangle\cong (\mathbf{F}_q,+)$ is a fundamental root subgroup of $\widetilde{G}$ and $X_{\alpha_i,\alpha_j}=\langle X_{\alpha_i}, X_{\alpha_j}\mid R_{i,j}\rangle$ where $R_{i,j}$ comes from the rank $2$ subsystem with fundamental roots  $\alpha_i$ and $\alpha_j$.
To simplify things, let us denote $X_i:=X_{\alpha_i}$ and $X_{i,j}:=X_{\alpha_i,\alpha_j}$  (in particular, remark that $X_{i,j}=X_{j,i}$ for all $0 \leqslant i,j \leqslant l$).
Thus in fact, $U_+$ is a group generated by the elements of the root subgroups $X_i$'s for $i=0,1, ..., l$ and presented by  the relations between those generators that hold in $X_{i,j}$ for $0\leqslant i,j\leqslant l$ (we may let $X_{i,i}:=X_i$).

Recall that the $3$-spherical condition is relevant to generalized Dynkin diagrams as obtained for instance in Kac-Moody theory. 
In our situation the Dynkin diagram of the (untwisted) affine Kac-Moody group $\widetilde{G}={\bf G}({\bf F}_q[t,t^{-1}])$ obtained from ${\bf G}$, is the diagram obtained by adding suitably a single vertex to the (classical, i.e. spherical) Dynkin diagram of ${\bf G}$.
The resulting Dynkin diagram is called the completed Dynkin diagram of ${\bf G}$; the vertex and the edges emanating from it are added in such a way that any subdiagram obtained by removing an arbitrary vertex (and the edges emanating from it) is of finite type. 
Geometrically, this corresponds to the fact that we pass from a finite Weyl group to an affine one acting on a Euclidean tiling so that any vertex stabilizer is a finite Weyl group. 
In other words, if ${\bf G}$ is a Chevalley group of rank $l$, then the Kac-Moody group $\widetilde{G}$ is $l$-spherical. 

Let us now discuss the tools we will need to obtain  an explicit presentation of $U_+$.
Let $v_1, \ldots, v_a$ be generators of $\mathbf{F}_q$  with $v_1=1$.
It is obvious that for each $i$,

$$\langle \{ x_{\alpha_i}(v_k) \}_{1\leqslant k\leqslant a}\mid x_{\alpha_i}(v_k)^p=1, [x_{\alpha_i}(v_k), x_{\alpha_i}(v_{k'})]=1 \, \hbox{\rm for} \, 1\leqslant k < k' \leqslant a\rangle$$ 
is a presentation of $X_i$.
 Therefore $U_+$ is generated by elements $\{ x_{\alpha_i}(v_k)\}_{1\leqslant k\leqslant a, 0\leqslant i\leqslant l}$ and the following 
 $\displaystyle a(l+1)+\frac{1}{2}a(a-1)(l+1)=\frac{a(a+1)(l+1)}{2}$
 relations must hold:
 
 \begin{enumerate}
 \item $x_{\alpha_i}(v_k)^p=1$ for  $1\leqslant k\leqslant a$ and $0\leqslant i\leqslant l$.
 \item  $[x_{\alpha_i}(v_k), x_{\alpha_i}(v_{k'})]=1$ for $1\leqslant k < k' \leqslant a$ and $0\leqslant i\leqslant l$.
 \vskip 3mm
 The remaining relations will come from the subsystems $X_{i,j}$ for $0\leqslant i\neq j\leqslant l$.
 Clearly, those depend on the type of $X_{i,j}$ which is determined by the type of root system generated by $\alpha_i$ and $\alpha_j$.
 Let us discuss those case-by-case.
 
 Suppose first that $X_{i,j}$ is of type $A_1\times A_1$.
 Then $X_{i,j}\cong X_i\times X_j$ and  we would need the following additional relations  to describe $X_{i,j}$ in this case:
 
 \item $[x_i(v_k), x_j(v_{k'})]=1$ for $1\leqslant k,k' \leqslant a$.
 Thus for every subsystem $X_{i,j}$ of type $A_1\times A_1$, we will have $a^2$ additional relations.
 \vskip 3mm
\item  If $X_{i,j}$ is of type $A_2$, then $X_{i,j}$ is isomorphic to a Sylow $p$-subgroup of ${\rm SL}_3(\mathbf{F}_q)$, and so using Lemma~\ref{lemma::BD}, we notice that we need relations (A3) and (A4) with $s_i(v_k)=x_i(v_k)$, $1\leq k\leq a$.
Thus for every subsystem $X_{i,j}$ of type $A_2$, we will have $2a+a(a-1)=a(a+1)$ additional relations.
\vskip 3mm
\item Finally, if $X_{i,j}$ is of type $C_2$, then $X_{i,j}$ is isomorphic to a Sylow $p$-subgroup of ${\rm Sp}_4(\mathbf{F}_q)$. If $p$ is odd, we may use the results of
\S 2.2.1 telling us that we need $$\frac{7a^2+13a}{2}-2a-a(a-1)=\frac{5a^2+11a}{2}$$ additional relations. These are relations (C4)--(C11) and some of relations (C1).  
If $p=2$, we use the result of  \cite[Prop. 3.4.1]{kn::LSe} to say that $X_{i,j}$ requires $8a^2$ additional relations.
\end{enumerate}

 Finally, notice that all of the above relations are products of $p$-powers and commutators.
 Therefore, $U_+/([U_+,U_+]U_+^p)$ is an elementary abelian $p$-group of order at most $p^{a(l+1)}$.
 To see that the exact order of this quotient is $p^{a(l+1)}$, it is enough to exhibit a quotient of $U_+$ which is an elementary abelian $p$-group whose order is equal to this upper bound. 
 This can be seen by considering the action of $U_+$ on the set of alcoves sharing a codimension 1 face with the alcove $c$ stabilized by $U_+$. 
 These alcoves are in one-to-one correspondence with the set of simple roots of the Kac-Moody group $\widetilde{G}$ defined by $c$. 
 Moreover, by the commutation relations between root groups in $\widetilde{G}$, each given simple root group acts simply transitively on the alcoves sharing a given panel with $c$ (but $\neq c$) and acts trivially on the remaining alcoves of the finite set under consideration. 
 The image of $U_+$ for this action is the desired quotient. 
 Therefore we  obtain that $$U_+/([U_+,U_+]U_+^p)\cong \mathbf{F}_p^{a(l+1)}$$
 (see also Corollary 2.5 of \cite{kn::CR}).
 
 We can now summarise:
 
\begin{prop}
\label{prop::presentation_of_Gamma}  
Let $\widetilde{G} = {\bf G}({\bf F}_q[t,t^{-1}])$ be a minimal affine untwisted Kac-Moody group of rank $l+1\geq 4$ defined over a field $\mathbf{F}_q$ where $q=p^a$, $a\in\mathbf{N}$, $q\geqslant 16$. Let  $U_+$ be  the subgroup of $\widetilde{G}$ generated by the positive real root subgroups. Then the  
the following conditions hold:
\begin{enumerate}
\item$U_+/([U_+,U_+]U_+^p)\cong \mathbf{F}_p^{a(l+1)}$, and so  $d(U_+)=a(l+1)$.
\item The group $U_+$ has a presentation $\langle \{ x_i(v_k) \}_{0\leqslant i\leqslant l, 1\leqslant k\leqslant a} \mid R \rangle$ such\ that
 $R$ are as described in (1)--(5) above.
\item $$r(U_+)\leqslant \frac{a(a+1)(l+1)}{2}+ |\{X_{i,j}\mid  X_{i,j} \ \mbox{is\ of\ type}\ A_1\times A_1\}| \cdot a^2$$ 
$$+ \, |\{X_{i,j}\mid  X_{i,j} \ \mbox{is\ of\ type}\ A_2\}| \cdot a(a+1) + |\{X_{i,j}\mid  X_{i,j} \ \mbox{is\ of\ type}\ C_2\}| \cdot |R_{C_2}|$$ where 
$|R_{C_2}|=\frac{5a^2+11a}{2}$ if $q$ is odd, and $|R_{C_2}|=8a^2$ if $q$ is even.
\end{enumerate}
%
%
\end{prop}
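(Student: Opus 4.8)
The plan is to assemble Proposition~\ref{prop::presentation_of_Gamma} from three ingredients, each essentially already in place in the preceding subsections. First I would invoke \cite[Corollary 1.2]{kn::DM}: since $\widetilde{G}$ is an affine untwisted Kac-Moody group of rank $l+1\geqslant 4$ built from a Chevalley group $\bf G$ of rank $l\geqslant 3$, its completed Dynkin diagram is $3$-spherical (indeed it is $l$-spherical, as explained above), and the hypothesis $q\geqslant 16$ is exactly what is needed to conclude that $U_+$ is the amalgam of the rank-$1$ and rank-$2$ root subgroups $\{X_i\}\cup\{X_{i,j}\}$, $0\leqslant i,j\leqslant l$. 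This gives a presentation of $U_+$ whose generators are $\{x_{\alpha_i}(v_k)\}_{0\leqslant i\leqslant l,\ 1\leqslant k\leqslant a}$ and whose relations are the defining relations of the $X_{i,j}$'s (together with those of the $X_i$'s, which are absorbed into the $X_{i,j}$'s for $i\neq j$). This establishes part (2) and the presentation skeleton underlying part (3).

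Next I would count, for each possible isomorphism type of $X_{i,j}$, how many relations are needed beyond relations (1)--(2). The four cases are $A_1\times A_1$, $A_2$, $C_2$, and the type $C_2$ with $p=2$ (and $B_2=C_2$, $A_1\times A_1$ up to relabelling; one should note there is no $G_2$ node in the completed Dynkin diagram of a classical group, and for exceptional $\bf G$ the completed diagram still has only bonds of the types handled, which is where one uses that $\bf G$ ranges over simply connected Chevalley groups whose affinizations are $3$-spherical). For $A_1\times A_1$ one needs the $a^2$ commutation relations (3); for $A_2$ one feeds in Lemma~\ref{lemma::BD}, keeping (A3) and (A4) since (A1) and (A2) are already among relations (1)--(2), for a total of $2a+a(a-1)=a(a+1)$; for $C_2$ with $p$ odd one feeds in Lemma~\ref{lemma::Sp_4}, subtracting the $2a+a(a-1)$ relations already present to land on $\tfrac{7a^2+13a}{2}-2a-a(a-1)=\tfrac{5a^2+11a}{2}$; and for $C_2$ with $p=2$ one cites \cite[Prop.~3.4.1]{kn::LSe} for the bound $8a^2$. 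Summing over the edges of the completed Dynkin diagram, grouped by type, yields exactly the displayed inequality in part (3). The inequality (rather than equality) is appropriate because the per-edge presentations may share redundancies once amalgamated, and because for $p=2$ the $C_2$ bound is not claimed to be tight.

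Finally, for part (1) I would argue that $U_+/([U_+,U_+]U_+^p)$ is elementary abelian of order \emph{at most} $p^{a(l+1)}$ simply because every relation listed in (1)--(5) is a product of $p$-th powers and commutators, so abelianizing-mod-$p$ leaves at most the $a(l+1)$ generators $x_{\alpha_i}(v_k)$ as a spanning set. For the reverse inequality I would exhibit an explicit elementary abelian quotient of that order using the geometric action of $U_+$ on the alcoves adjacent to the fixed alcove $c$: these alcoves biject with the $l+1$ simple roots, each simple root group $X_i$ acts simply transitively on the $q$ alcoves across the corresponding panel and trivially on the others, so the image of $U_+$ in the permutation group of this finite set is a product of $l+1$ copies of $(\mathbf{F}_q,+)$, i.e.\ $\mathbf{F}_p^{a(l+1)}$. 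Combining the two bounds forces $U_+/([U_+,U_+]U_+^p)\cong\mathbf{F}_p^{a(l+1)}$, hence $d(U_+)=a(l+1)$; this also matches \cite[Corollary 2.5]{kn::CR}. The main obstacle I anticipate is purely bookkeeping: verifying that in each relevant completed Dynkin diagram the rank-$2$ subsystems are only of the four types handled (so that the cited finite-group presentations genuinely cover all $X_{i,j}$), and checking in the $C_2$ case that the relations already listed in (1)--(2) really are a sub-list of the relations produced by Lemma~\ref{lemma::Sp_4}, so that the subtraction in the count is legitimate.
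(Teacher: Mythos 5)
Your proposal follows the paper's own argument essentially verbatim: the amalgam decomposition of $U_+$ via \cite[Corollary 1.2]{kn::DM}, the per-type relation counts drawn from Lemma~\ref{lemma::BD}, Lemma~\ref{lemma::Sp_4} and \cite[Prop.~3.4.1]{kn::LSe} with the same subtraction of the already-listed relations (1)--(2), and the same two-sided argument for part (1) (upper bound because all relations are products of $p$-th powers and commutators, lower bound via the simply transitive action of the simple root groups on the alcoves adjacent to $c$). The bookkeeping caveats you flag (no $G_2$ bonds in the relevant completed diagrams, legitimacy of the subtraction in the $C_2$ count) are exactly the points the paper implicitly relies on, and they check out.
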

 
%


We may now evaluate $r(U_+)$ in each case using Proposition~\ref{prop::presentation_of_Gamma}. 
We then record the datum and the outcomes of the calculations  in Table 1. For groups of type $B_n$, $C_n$ and $F_4$, we only record the estimates for $p$ odd (though all the estimates appear in the next section). We do it because the bound for $p=2$ is quite far away from being sharp.

\begin{table}
\label{table1}
\begin{center}
\caption{}
\vskip 5mm
\begin{tabular}{|c|c|c|c|c|c|} \hline
Type of           & Kac-Moody  diagram of    & $|\{X_{i,j}\}|$ for                   &  $|\{X_{i,j}\}|$ for &  $|\{X_{i,j}\}|$ for  & $r(U_+)\leqslant$\\ 
$\mathbf{G}$ &  $\widetilde{\mathbf{G}}$  &    $X_{i,j}=A_1\times A_1$   &   $X_{i.j}=A_2$  &  $X_{i,j}=C_2$     &($^*$ - is given only for $p\geq 3$)           \\  \hline

&&&&& \\

$A_l$ & 
\includegraphics[width=3cm]{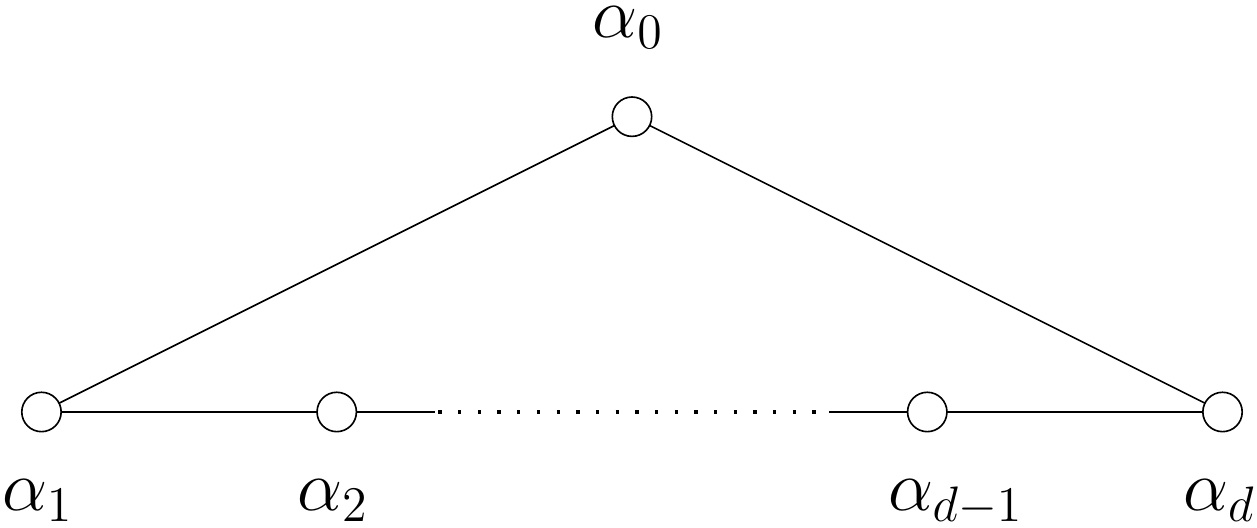}
  &  $\frac{(l+1)(l-2)}{2}$  & $l+1$  & $0$   & $\frac{a^2(l+1)^2+3a(l+1)}{2}$  \\

&&&&& \\ \hline

&&&&& \\

$B_l$ & 
\includegraphics[width=3cm]{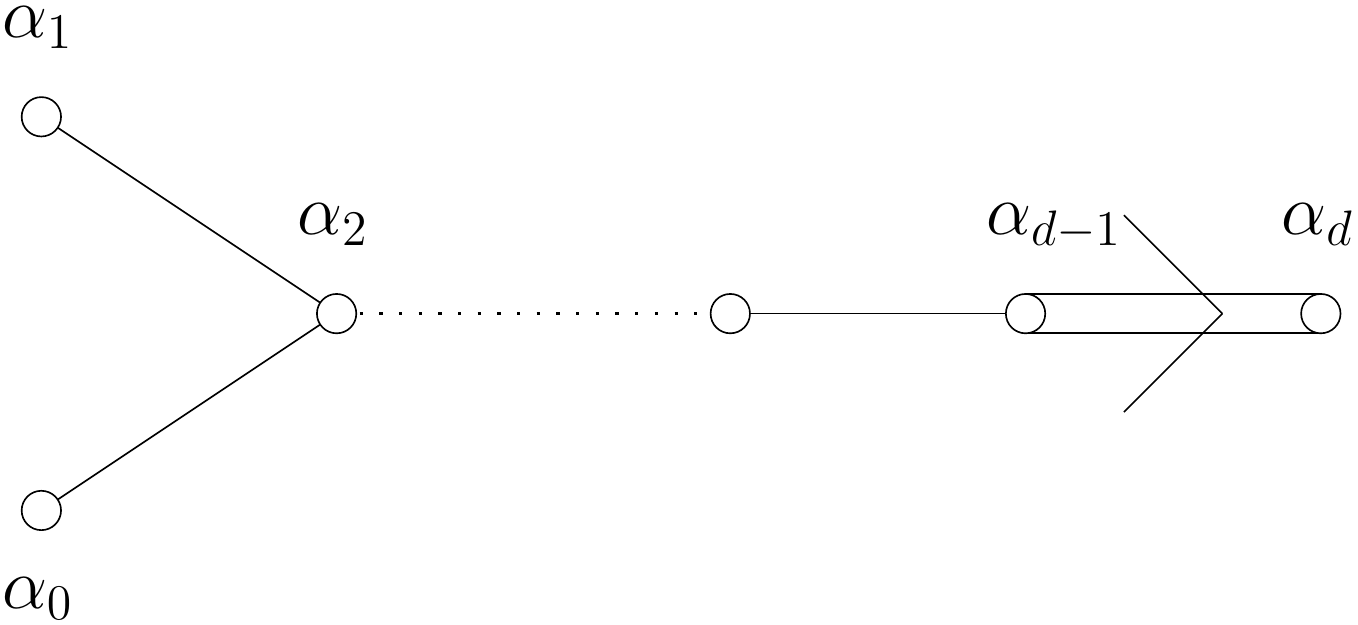}
  &  $\frac{l(l-1)}{2}$  & $l-1$  & $1$   & $\frac{a^2(l+1)^2+3a(l+1)+3a^2+7a}{2}^*$  \\

&&&&& \\ \hline

&&&&& \\

$C_l$ & 
\includegraphics[width=3cm]{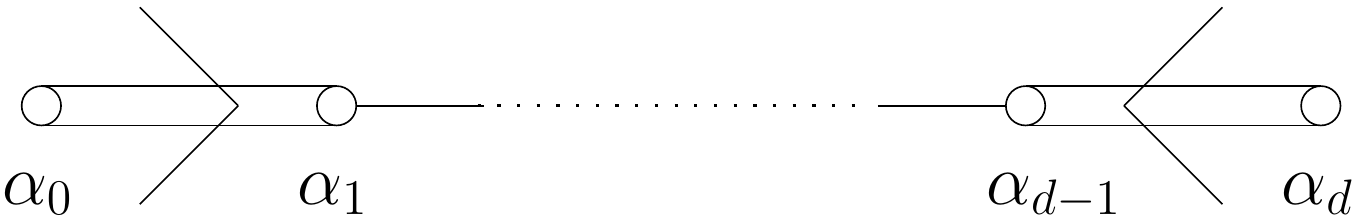}
  &  $\frac{l(l-1)}{2}$  & $l-2$  & $2$   & $\frac{a^2(l+1)^2+3a(l+1)+6a^2+16a}{2}^*$  \\

&&&&& \\ \hline

&&&&& \\

$D_l$ & 
\includegraphics[width=3cm]{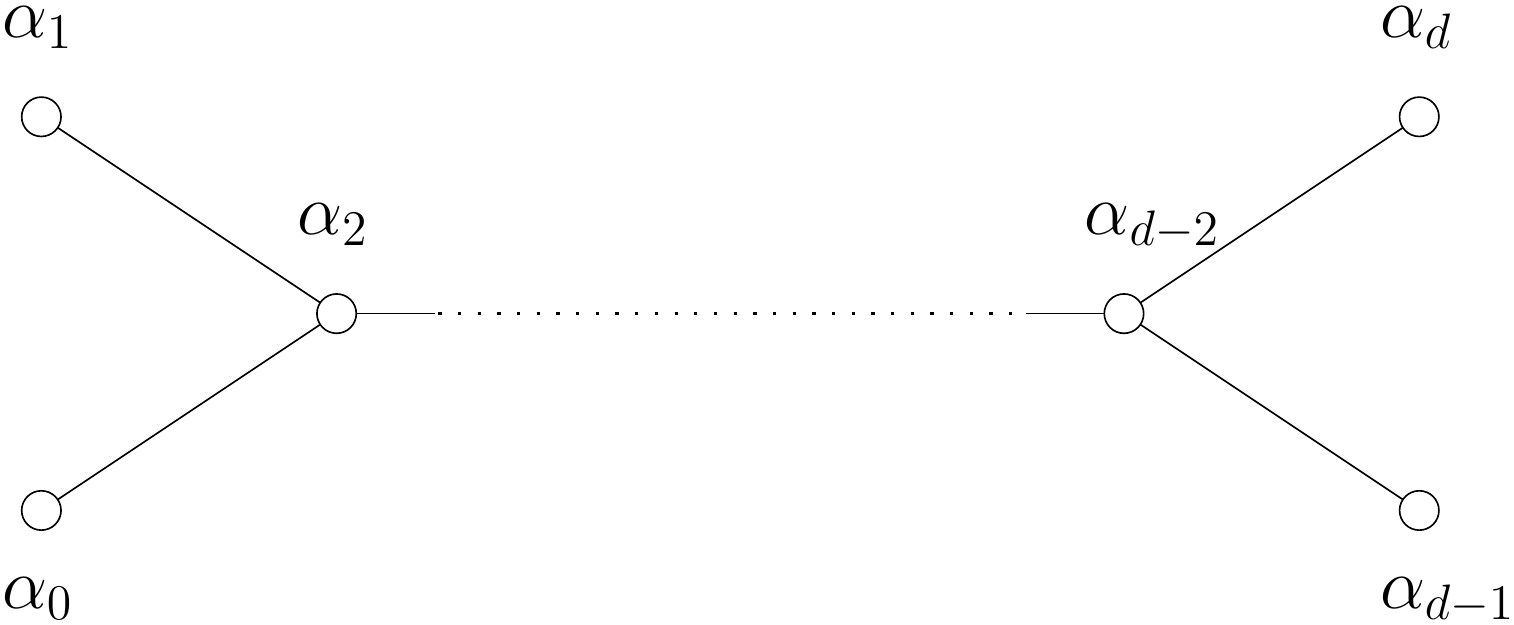}
  &  $\frac{l(l-1)}{2}$  & $l$  & $0$   & $\frac{a^2(l+1)^2+3a(l+1)-2a}{2}$  \\

&&&&& \\ \hline

&&&&& \\

$E_l$ & 
\includegraphics[width=3cm]{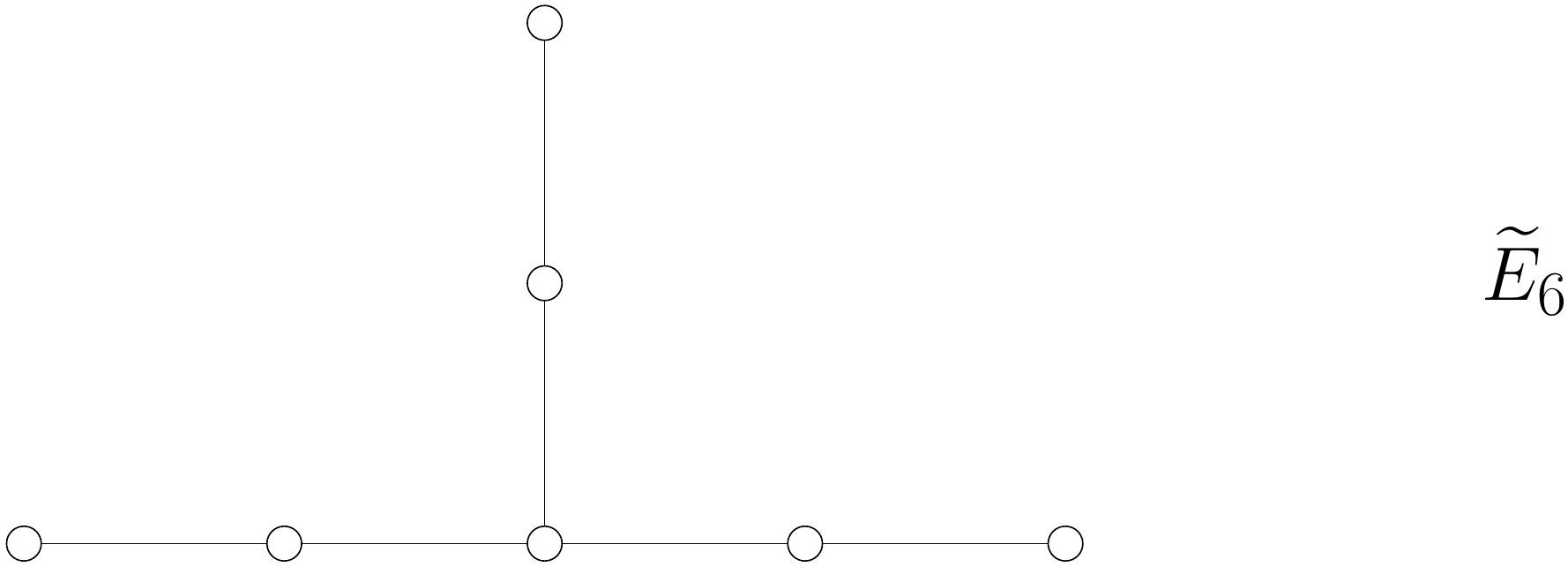}
  &  $\frac{l(l-1)}{2}$  & $l$  & $0$   & $\frac{a^2(l+1)^2+3a(l+1)-2a}{2}$  \\

&&&&& \\ 
&\includegraphics[width=3cm]{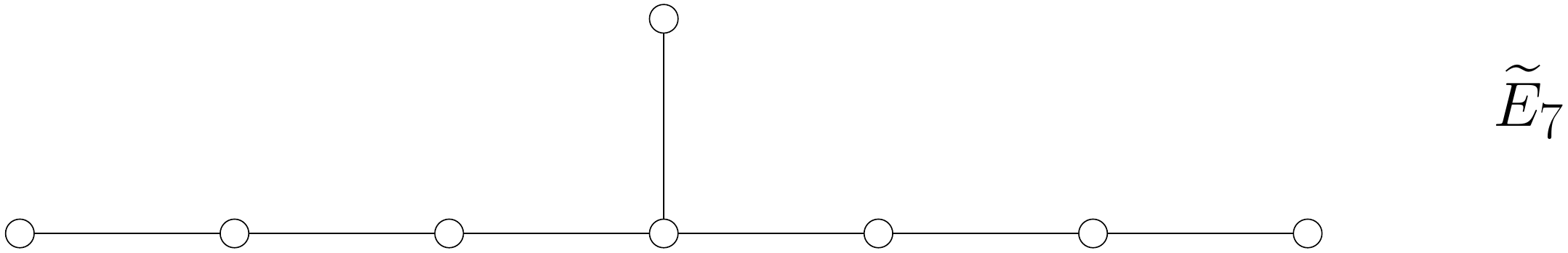}&&&& \\ 
&&&&& \\ 
&\includegraphics[width=3cm]{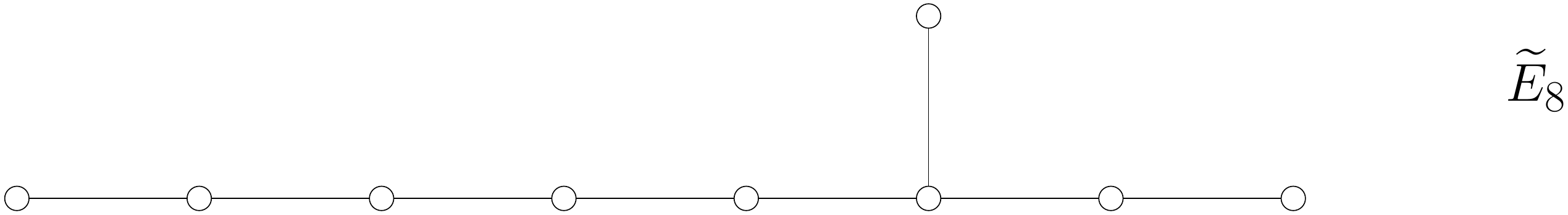}&&&& \\ 

&&&&& \\ \hline

&&&&& \\

$F_4$ & 
\includegraphics[width=3cm]{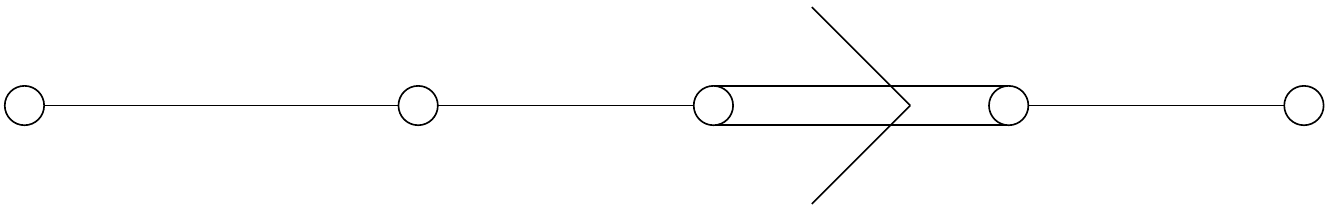}
  &  $6=\frac{4\cdot 3}{2}$  & $3=4-1$  & $1$   & $14a^2+11a \ \ ^*$  \\

&&&&& \\ \hline

\end{tabular}
\end{center}
\end{table}

\subsection{Conclusion}
\label{ss - ccl Sylow}

The number of generators of $P=\Gamma_{\hat{p}}$ in its  presentation coming from $\Gamma$ is $a(l+1)$. This is in fact a presentation with a minimal number of generators (by Proposition~\ref{prop::presentation_of_Gamma}  -- see also  \cite[Cor. 2.5]{kn::CR}). Moreover the generators  can be chosen to be  $x_0(v_k), ... , x_{l}(v_k)$, $1\leqslant k\leqslant a$.
Our discussion gives us that $P$ has a presentation $\langle x_0(v_k), ... , x_{l}(v_k), 1\leqslant k\leqslant a \mid R\rangle$ with $|R| \, \leqslant Ca^2(l+1)^2$ (with $C$ an appropriate constant as calculated in the previous section). 
More precisely, this proves the upper bound in the next theorem: 

\begin{theorem}
\label{th - precise presentation pro-p-Sylow}
Let ${\bf G}\bigl(\mathbf{F}_q(\!(t)\!)\bigr)$ be a simple, simply connected, Chevalley group of rank $l$ over a field $\mathbf{F}_q(\!(t)\!)$ with $q=p^a\geqslant 16$, and let $P$ be its Sylow pro-$p$-subgroup.
Then 
$$\frac{a^2(l+1)^2}{4}\leqslant r(P)$$
and the following upper bounds hold:
\begin{enumerate}
\item[{\rm 1.}]~If $G$ has type $A_{l}$ and $l\geqslant  3$, we have $r(P)\leqslant \frac{a^2(l+1)^2+3a(l+1)}{2}$.

\item[{\rm 2.}]~For type $B_{l}$ and $l\geqslant 3$, we have $r(P)\leqslant \frac{a^2(l+1)^2+3a(l+1)+3a^2+7a}{2}$ for $p\geqslant 3$,\\
and $r(P)\leqslant \frac{a^2(l+1)^2+3a(l+1)+14a^2-4a}{2}$ for $p=2$.

\item[{\rm 3.}]~For type $C_{l}$ and  $l\geqslant 3$, we have $r(P)\leqslant \frac{a^2(l+1)^2+3a(l+1)+6a^2+16a}{2}$ and $p\geqslant 3$,\\
and $r(P)\leqslant \frac{a^2(l+1)^2+3a(l+1)+28a^2-6a}{2}$ for $p=2$.

\item[{\rm 4.}]~For type $D_{l}$ and $l\geqslant 4$, we have $r(P)\leqslant \frac{a^2(l+1)^2+3a(l+1)-2a}{2}$.

\item[{\rm 5.}]~For type $E_{l}$ and $l\in\{6,7,8\}$, we have $r(P)\leqslant \frac{a^2(l+1)^2+3a(l+1)-2a}{2}$.

\item[{\rm 6.}]~For type $F_4$, we have $r(P)\leqslant 14a^2+11a$ for $p\geqslant 3$, and $r(P)\leqslant 15a^2+4a$ for $p=2$.

.
\end{enumerate} 
\end{theorem}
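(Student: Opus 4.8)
The plan is to split the statement into its lower bound and its family of upper bounds, treating them by completely different tools. For the upper bounds, essentially all the work has already been done: by Lemma~\ref{key_lemma} (applied with $\Gamma = U_+$ as in \S\ref{ss - unipotent finite}) we have $P = (U_+)_{\hat p}$, and a presentation of a discrete group passes to a presentation of its pro-$p$ completion with the same generators and relations. Hence $r(P) \leqslant r(U_+)$, and Proposition~\ref{prop::presentation_of_Gamma} together with the case-by-case count recorded in Table~1 gives exactly the six displayed inequalities; for types $B_l$, $C_l$ and $F_4$ one substitutes $|R_{C_2}| = \tfrac{5a^2+11a}{2}$ when $p$ is odd and $|R_{C_2}| = 8a^2$ when $p=2$, which accounts for the two sub-cases in items 2, 3 and 6. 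So the first thing I would do is simply assemble these: state $r(P)\leqslant r(U_+)$, then quote Proposition~\ref{prop::presentation_of_Gamma}(3) and read off each line of Table~1. This is bookkeeping, not mathematics.

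The substantive half is the lower bound $r(P)\geqslant \tfrac{a^2(l+1)^2}{4}$. Here the tool is the Golod--Shafarevich inequality, as announced in the introduction. Recall that if a pro-$p$ group $P$ has $d = d(P)$ generators and $r = r_p(P)$ relations, then Golod--Shafarevich forces $r > d^2/4$ (this is the standard form: a pro-$p$ group with $r \leqslant d^2/4$ is infinite, so a finite pro-$p$ group, or more generally one of finite cohomological dimension — which our $P$ is, being a compact $p$-adic analytic group — must violate $r \leqslant d^2/4$; one has to be slightly careful and use the sharp version giving $r_p(P) \geqslant d(P)^2/4$ for such $P$). Now by Proposition~\ref{prop::presentation_of_Gamma}(1), or equivalently by \cite[Cor.~2.5]{kn::CR}, we know $d(U_+) = a(l+1)$, and since $P = (U_+)_{\hat p}$ we get $d(P) = d(U_+) = a(l+1)$. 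Finally, by \cite[Corollary 5.5]{kn::L} quoted in the introduction, $r(P) = \max\{d(P), r_p(P)\} \geqslant r_p(P)$. Plugging $d(P) = a(l+1)$ into the Golod--Shafarevich bound $r_p(P) \geqslant d(P)^2/4$ yields $r(P) \geqslant \tfrac{a^2(l+1)^2}{4}$, as required.

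The order in which I would carry this out: first dispatch the upper bounds by invoking Proposition~\ref{prop::presentation_of_Gamma} and Table~1; then establish $d(P) = a(l+1)$ via $P = (U_+)_{\hat p}$ and Proposition~\ref{prop::presentation_of_Gamma}(1); then apply Golod--Shafarevich and the identity $r(P) = \max\{d(P), r_p(P)\}$ to get the lower bound. The main obstacle — really the only point requiring care — is making sure the version of Golod--Shafarevich being used applies to $P$: one needs that a finitely presented pro-$p$ group which is $p$-adic analytic (equivalently, has finite cohomological dimension as a pro-$p$ group) cannot have $r_p(P) \leqslant d(P)^2/4$, so that the strict Golod--Shafarevich inequality $r_p(P) > d(P)^2/4$, hence $r_p(P) \geqslant d(P)^2/4$, holds. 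Since $P$ is an open subgroup of a compact $p$-adic analytic group ${\bf G}\bigl(\mathbf{F}_q(\!(t)\!)\bigr)$, this hypothesis is satisfied, and the rest is immediate.
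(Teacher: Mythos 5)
Your treatment of the upper bounds is exactly the paper's: from $P=(U_+)_{\hat p}$ one gets $r(P)\leqslant r(U_+)$, and the six inequalities are read off from Proposition~\ref{prop::presentation_of_Gamma} and Table~1, with the two values of $|R_{C_2}|$ accounting for the parity of $p$ in types $B_l$, $C_l$ and $F_4$. That half is correct and is pure bookkeeping, as you say.

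The lower bound is where your argument breaks. You justify $r_p(P)\geqslant d(P)^2/4$ by asserting that $P$ is a compact $p$-adic analytic group (being open in ${\bf G}\bigl(\mathbf{F}_q(\!(t)\!)\bigr)$), equivalently of finite cohomological dimension, and that such groups cannot satisfy $r\leqslant d^2/4$. None of these claims holds. First, ${\bf G}\bigl(\mathbf{F}_q(\!(t)\!)\bigr)$ is analytic over the equal-characteristic local field $\mathbf{F}_q(\!(t)\!)$, not over $\mathbf{Q}_p$; its compact open subgroups are not $p$-adic analytic (they have infinite rank as pro-$p$ groups). Second, $P$ contains torsion --- the root elements $x_\alpha(c)$, $c\in\mathbf{F}_q$, have order $p$ --- so its cohomological dimension is infinite. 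Third, finite cohomological dimension would not give Golod--Shafarevich anyway: a free pro-$p$ group has cohomological dimension $1$ and $r_p=0$. The classical Golod--Shafarevich theorem only says that a \emph{finite} $p$-group satisfies $r>d^2/4$; for an infinite pro-$p$ group such as $P$ an extra hypothesis is indispensable. The paper supplies it in Proposition~\ref{GS}: by Lubotzky--Shalev, $P$ is commensurable with an $\mathbf{F}_q[[t]]$-standard group, hence the dimensions $r_n=\dim_{\mathbf{F}_p}(\Delta^n/\Delta^{n+1})$ attached to the augmentation ideal $\Delta$ of $\mathbf{F}_qP$ grow subexponentially, and that subexponential growth is what forces $r_p(P)\geqslant d(P)^2/4$. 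With that substitution --- keeping your correct identifications $d(P)=d(U_+)=a(l+1)$ and $r(P)\geqslant r_p(P)$ --- the lower bound goes through.
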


\begin{proof}
It remains to prove the lower bound. 
For this we use the Golod-Shafarevich inequalities as stated in the Propostion below, with $Q=P$ and $d(P)=a(l+1)$.
\end{proof}

\begin{prop}
\label{GS}
Let $Q$ be a pro-$p$ subgroup of ${\bf G}\bigl( {\bf F}_q (\!( t )\!) \bigr)$ as in Theorem \ref{th - precise presentation pro-p-Sylow}. 
Then, with the obvious definitions for $d(Q)$ and $r(Q)$, we have $\displaystyle r(Q) \geqslant {d(Q)^2 \over 4}$. 
\end{prop}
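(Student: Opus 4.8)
The plan is to derive the bound from the Golod--Shafarevich inequality by contradiction. Write $d=d(P)$ and $r=r(P)$; by Lemma~\ref{key_lemma} and Proposition~\ref{prop::presentation_of_Gamma} we have $P=\Gamma_{\hat p}$ with $\Gamma=U_+$, $d=a(l+1)\geqslant 2$, and $r<\infty$. Suppose for contradiction that $r<d^2/4$. I will show this forces a certain dimension sequence attached to $P$ to grow exponentially, whereas the affine Kac--Moody description of $P=\overline{U}_+$ forces it to grow subexponentially.

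\emph{Step 1 (exponential growth from Golod--Shafarevich).} I would work in the completed group algebra $\mathbf{F}_p[[P]]$ with augmentation ideal $I$, and set $c_n=\dim_{\mathbf{F}_p}I^n/I^{n+1}$. The Golod--Shafarevich inequality gives, coefficient by coefficient, $\sum_{n\geqslant 0}c_n t^n\geqslant(1-dt+rt^2)^{-1}$. Since $r<d^2/4$ and $d\geqslant 2$, the quadratic $x^2-dx+r$ has two positive real roots, the larger being $\lambda_+=\tfrac12(d+\sqrt{d^2-4r})>d/2\geqslant 1$; hence the power series $(1-dt+rt^2)^{-1}$ has radius of convergence $1/\lambda_+<1$, and therefore $\limsup_n c_n^{1/n}\geqslant\lambda_+>1$, i.e.\ the sequence $(c_n)_n$ grows exponentially.

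\emph{Step 2 (subexponential growth from the affine root system).} By \S\ref{ss - unipotent finite}--\S\ref{ss - ccl Sylow}, $P=\overline{U}_+$ is topologically generated by the simple real root groups $X_{\alpha_0},\ldots,X_{\alpha_l}$ of $\widetilde{G}={\bf G}(\mathbf{F}_q[t,t^{-1}])$, all of height $1$. Using the commutation relations between root groups, the associated graded Lie algebra $\bigoplus_{i\geqslant 1}\gamma_i(P)/\gamma_{i+1}(P)$ of the lower central series of $P$ is a quotient of the positive nilpotent subalgebra $\mathfrak{n}_+$ of the affine Kac--Moody Lie algebra attached to $\widetilde{G}$, graded by root height and base-changed to $\mathbf{F}_q$. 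Because $\widetilde{G}$ is of affine type, for each height $i$ there are at most $|\Phi|$ real roots and at most one imaginary root (of multiplicity $l$), $\Phi$ being the finite root system of ${\bf G}$; hence $\dim_{\mathbf{F}_p}\gamma_i(P)/\gamma_{i+1}(P)\leqslant a(|\Phi|+l)$ for every $i$, a bound independent of $i$. By Quillen's theorem, $\mathrm{gr}_I\mathbf{F}_p[[P]]$ is the restricted enveloping algebra of the restricted Lie algebra $\mathfrak{L}=\bigoplus_{m\geqslant 1}D_m(P)/D_{m+1}(P)$, where $D_\bullet$ is the Zassenhaus series; since $D_m(P)=\prod_{ip^j\geqslant m}\gamma_i(P)^{p^j}$, the previous bound yields $\dim_{\mathbf{F}_p}\mathfrak{L}_m=O(\log m)$. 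By the Poincar\'e--Birkhoff--Witt theorem, $\sum_n c_n t^n\leqslant\sum_n\dim_{\mathbf{F}_p}U(\mathfrak{L})_n\,t^n=\prod_{m\geqslant 1}(1-t^m)^{-\dim_{\mathbf{F}_p}\mathfrak{L}_m}$, a product with radius of convergence $1$ because $\sum_m(\dim_{\mathbf{F}_p}\mathfrak{L}_m)\,t^m$ converges for $|t|<1$. Thus $(c_n)_n$ grows subexponentially, contradicting Step~1; therefore $r\geqslant d^2/4$.

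\emph{The main obstacle} is the structural input of Step~2: the claim that $\dim_{\mathbf{F}_p}\gamma_i(P)/\gamma_{i+1}(P)$ stays bounded, i.e.\ that the associated graded of the lower central series of $P$ is governed by the affine root grading. This is delicate because a single relation $[x_{\alpha_i}(s),x_{\alpha_j}(t)]=\prod_{k,l\geqslant 1}x_{k\alpha_i+l\alpha_j}(\cdots)$ couples several heights, so one must rule out any ``extra'' growth of the graded pieces; this is precisely where affine type (as opposed to indefinite type) enters, since it is what keeps the root multiplicities, hence $\dim\mathfrak{n}_+$ in each degree, bounded. A more self-contained alternative that avoids the Kac--Moody Lie algebra is to use the height filtration $V_n=\overline{\langle X_\gamma:\mathrm{ht}(\gamma)\geqslant n\rangle}$, observe $\gamma_n(P)\subseteq V_n$ and $\dim_{\mathbf{F}_p}P/V_n=O(n)$, and combine this with a ``bounded width implies not Golod--Shafarevich'' statement; in either route, verifying the affine bound on the width is the crux.
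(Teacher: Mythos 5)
Your Step 1 is correct and is exactly the ``subexponential growth of $\dim_{\mathbf{F}_p}(\Delta^n/\Delta^{n+1})$ implies the Golod--Shafarevich bound $r\geqslant d^2/4$'' implication that the paper also uses (it is the content of the discussion after Prop.~5.5 in \cite{kn::LS}). Where you diverge is Step 2: the paper does not argue via the lower central series of $\overline{U}_+$ at all, but simply observes that $Q$ is commensurable with an $\mathbf{F}_q[[t]]$-\emph{standard} group in the sense of \cite[Def.~2.1]{kn::LS} (an analytic group over $\mathbf{F}_q[[t]]$, e.g.\ a congruence subgroup of ${\bf G}(\mathbf{F}_q[[t]])$), and Prop.~5.4 of \cite{kn::LS} already gives subexponential growth of the graded pieces of the group algebra for such groups. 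Your replacement of that citation by a direct affine Kac--Moody computation is a genuinely different route, but as written it has a gap, and you have correctly located it yourself.

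The gap is the assertion that $\bigoplus_i\gamma_i(P)/\gamma_{i+1}(P)$ is a quotient of $\mathfrak{n}_+\otimes\mathbf{F}_q$ \emph{with the height grading matching the lower-central-series grading}; everything else in Step 2 hangs on the resulting uniform bound $\dim_{\mathbf{F}_p}\gamma_i/\gamma_{i+1}\leqslant a(|\Phi|+l)$. To get that quotient map you must verify that the Serre-type relations $(\mathrm{ad}\,\bar x_{\alpha_i})^{1-a_{ij}}\bar x_{\alpha_j}=0$ hold in $\mathrm{gr}_\gamma(P)$ in the correct degree, i.e.\ that the group element $(\mathrm{ad}\,x_{\alpha_i}(s))^{N}x_{\alpha_j}(t)$, which lies in $\gamma_{N+1}\cap V_{N+2}$ because $N\alpha_i+\alpha_j$ is not a root, actually lies in $\gamma_{N+2}$. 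Nothing you have written rules out $\gamma_{N+1}\cap V_{N+2}\not\subseteq\gamma_{N+2}$. Worse, all the containments you do have at your disposal ($\gamma_n(P)\subseteq V_n$, hence $D_m(P)\subseteq V_m$ since the Zassenhaus series is the fastest descending $N_p$-series) yield $\dim P/\gamma_n\geqslant\dim P/V_n$ and $\dim P/D_m\geqslant\dim P/V_m$: these are \emph{lower} bounds on the layers of the lower central and Zassenhaus series, whereas subexponential growth of $c_n$ via Jennings--Quillen requires an \emph{upper} bound on $\dim D_m/D_{m+1}$, equivalently a reverse containment of the form $D_m(P)\supseteq V_{cm}$. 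Your proposed ``more self-contained alternative'' at the end suffers from exactly the same directional problem, so it is not an alternative but the same missing statement. Closing it amounts to proving that the height filtration and the Zassenhaus filtration of $\overline{U}_+$ are uniformly commensurate, which is essentially what the analytic-group machinery of \cite{kn::LS} delivers and what the paper imports wholesale.
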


\begin{proof}
We follow the lines given by the proof of Prop. 5.4 and the discussion after Prop. 5.5 in \cite{kn::LS}. 
With the notation and terminology there, the group $Q$ is commensurable with an ${\bf F}_q[[t]]$-standard group [loc. cit., Def. 2.1], which implies the subexponential growth of the sequence $(r_n)_{n \geqslant 0}$ related to the powers of the augmentation ideal $\Delta$ of the group ring ${\bf F}_qQ$ (more precisely: $r_n$ is defined to be ${\rm dim}_{{\bf F}_p}(\Delta^n/\Delta^{n+1})$ [loc. cit., p. 320]).
Subexponential growth of $(r_n)_{n \geqslant 0}$ implies the desired Golod-Shafarevich inequality.
\end{proof}



The equality $P=\Gamma_{\hat{p}}=(U_+)_{\hat{p}}$ was used  to derive an upper bound on the number of relations needed to present $P$ as a pro-$p$ group.
But it can be also used to deduce a lower bound on the number of relations needed to present $\Gamma=U_+$ in the discrete category.

\begin{cor}
\label{cor::2.5}
For $\Gamma=U_+$ we have:

$$d(\Gamma)=a(l+1)\ \ \ \mbox{and}\ \ \ \frac{a^2(l+1)^2}{4}\leqslant r(\Gamma)\leqslant 6.25\cdot\frac{a^2(l+1)^2}{4}.$$

\noindent 


\end{cor}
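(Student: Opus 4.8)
The plan is to assemble Corollary~\ref{cor::2.5} directly from material already established in the excerpt. The identity $d(\Gamma)=a(l+1)$ is immediate from Proposition~\ref{prop::presentation_of_Gamma}(1), since $U_+/([U_+,U_+]U_+^p)\cong\mathbf{F}_p^{a(l+1)}$ and the number of generators of a (discrete) group surjects onto the $\mathbf{F}_p$-dimension of this quotient; the explicit generators $x_0(v_k),\ldots,x_l(v_k)$ exhibited before Proposition~\ref{prop::presentation_of_Gamma} show this bound is attained, so $d(\Gamma)=a(l+1)$.

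For the lower bound on $r(\Gamma)$, I would argue that any discrete presentation of $\Gamma=U_+$ gives, after pro-$p$ completion, a pro-$p$ presentation of $P=\Gamma_{\hat p}$ with the same number of relations (this passage from discrete to pro-$p$ presentations is exactly the mechanism invoked throughout Section~\ref{s - Sylow} and recorded in Lemma~\ref{key_lemma}). Hence $r(\Gamma)\geqslant r_p(P)=r(P)$, and Proposition~\ref{GS} with $Q=P$ and $d(P)=a(l+1)$ yields
$$r(\Gamma)\geqslant r(P)\geqslant \frac{d(P)^2}{4}=\frac{a^2(l+1)^2}{4}.$$

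For the upper bound I would invoke the explicit presentation of $\Gamma=U_+$ from Proposition~\ref{prop::presentation_of_Gamma}(2)--(3): the number of relations there is bounded termwise by the quantities tabulated in Table~1, all of which are of the form $\tfrac{a^2(l+1)^2}{2}+O\!\bigl(a(l+1)+a^2\bigr)$ for the classical types (and the genuinely sporadic small-rank cases $E_l$, $F_4$ are finite in number and fit the same shape), so $r(\Gamma)\leqslant C\cdot\tfrac{a^2(l+1)^2}{4}$ with $C$ chosen large enough to absorb all cases; the worst ratio occurs for the diagram with the most $C_2$-subsystems in characteristic $2$, giving roughly $r(\Gamma)\leqslant\tfrac{a^2(l+1)^2}{2}+(\text{lower order})\leqslant 2\cdot\tfrac{a^2(l+1)^2}{4}+\cdots$, and checking the tabulated constants shows $6.25=\tfrac{25}{4}$ comfortably dominates every row once $l\geqslant 3$.

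The main obstacle is bookkeeping rather than conceptual: one must verify that the constant $6.25$ is actually uniform over \emph{all} the rows of Table~1, including the $p=2$ estimates for $B_l$, $C_l$, $F_4$ (which are not displayed in the table but are referenced as appearing in Theorem~\ref{th - precise presentation pro-p-Sylow}), and also confirm that the finitely many exceptional diagrams $E_6,E_7,E_8$ and $F_4$ satisfy the bound with the same constant. I would handle this by writing $r(\Gamma)\leqslant\tfrac{a^2(l+1)^2}{2}+K\bigl(a^2+a(l+1)\bigr)$ with an explicit $K$ read off from the table, then noting that for $l\geqslant 3$ (so $(l+1)^2\geqslant 16$) the correction term is dominated by $\tfrac{1}{4}\cdot\tfrac{a^2(l+1)^2}{4}$, which together with the leading $\tfrac{a^2(l+1)^2}{2}=2\cdot\tfrac{a^2(l+1)^2}{4}$ keeps the total below $6.25\cdot\tfrac{a^2(l+1)^2}{4}$.
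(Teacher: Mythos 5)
Your proof is correct and takes essentially the same route as the paper: $d(\Gamma)$ and the upper bound on $r(\Gamma)$ are read off from Proposition~\ref{prop::presentation_of_Gamma} and Table~1, while the lower bound follows from $r(\Gamma)\geqslant r_p(\Gamma_{\hat p})=r_p(P)$ combined with the Golod--Shafarevich inequality of Proposition~\ref{GS}. The only quibble is your intermediate claim that the correction term is dominated by $\tfrac{1}{4}\cdot\tfrac{a^2(l+1)^2}{4}$ (false for the $p=2$ rows with small $l$, where the $14a^2$ contribution exceeds $\tfrac{a^2(l+1)^2}{16}$), but the exact table values, using $q\geqslant 16$, still lie below $6.25\cdot\tfrac{a^2(l+1)^2}{4}$, so the conclusion is unaffected.
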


\begin{proof}
The result on $d(\Gamma)$ was shown above (Proposition~\ref{prop::presentation_of_Gamma}) and so was the upper bound on $r(\Gamma)$
(in fact, slightly better bounds are given in Table 1).
The lower bound is deduced from the fact that $r(\Gamma)\geqslant r_p(\Gamma_{\hat{p}})=r_p(P)$ and the Golod-Shafarevich inequality given in Proposition \ref{GS}.
\end{proof}

\begin{remark}
The last corollary is slightly surprising: usually the Steinberg type presentations of groups are far from being optimal. 
In our case the Steinberg-like presentations are essentially optimal. 
\end{remark}

\section{The characteristic $0$ case}

Some results like Theorems \ref{th::presentation_max_compact} and 0.2 can be proved also in characteristic $0$. But our results in this case are weaker than the above theorems.

In this section let $k$ be a number field, $\mathcal{O}$ its ring of integers and $V$ the set of its   finite  valuations. 
For $v\in V$, let $k_v$ be the completion of $k$ with respect to $v$, and $\mathcal{O}_v$ the closure of $\mathcal{O}$ in $k_v$.
For such a $v$, denote by $\mathbf{F}_{p_v^{a_v}}$ the finite residue field of $k$.
Let $\bf{G}$   be a simple simply connected Chevalley group scheme of arbitrary rank $l\geqslant 1$, let $G={\bf G}(\mathcal{O}_v)$ and let $P$ be a Sylow pro-$p_v$ subgroup of $G$. Here is what we can prove in this situation.

\begin{theorem}
\label{theorem::3.1}
There exists a constant $C_1=C_1(k)$ such that for every $v\in V$ and every ${\bf G}$  as above, $G={\bf G}(\mathcal{O}_v)$ has a presentation $\Sigma(G)$, with $D_{\Sigma(G)}$ generators and $R_{\Sigma(G)}$ relations, satisfying 
$$D_{\Sigma(G)}+R_{\Sigma(G)}\leqslant C_1.$$
\end{theorem}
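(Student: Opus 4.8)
The plan is to imitate the characteristic $p$ argument of Section~\ref{s - maximal compact}, replacing the single global field $\mathbf{F}_q(t)$ by the fixed number field $k$. First I would fix $k$, its ring of integers $\mathcal{O}$, and run the argument of Proposition~\ref{prop::AL} in this setting. The key input is the congruence subgroup property: since ${\bf G}$ is simple simply connected and $k$ a number field, for any finite set $S$ of places containing the Archimedean ones with ${\rm rk}_S({\bf G})\geqslant 2$, the map $\pi:\widehat{{\bf G}(\mathcal{O}_S)}\to\overline{{\bf G}(\mathcal{O}_S)}$ has finite (central, cyclic) kernel by Theorem~\ref{th:CSP}. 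Combined with strong approximation, $\overline{{\bf G}(\mathcal{O}_S)}\cong\prod_{v\notin S}{\bf G}(\mathcal{O}_v)$, so up to a bounded-index adjustment the profinite completion of the $S$-arithmetic group ${\bf G}(\mathcal{O}_S)$ is the product $\prod_{v\notin S}{\bf G}(\mathcal{O}_v)$. Then, exactly as in the proof of Proposition~\ref{prop::AL}, for any fixed $v$ one splits off the factor ${\bf G}(\mathcal{O}_v)$ and kills the complementary factor $M$ by a single normal generator, using that each ${\bf G}(\mathcal{O}_w)$ ($w\neq v$) has a unique maximal normal subgroup (the preimage of the unique simple quotient of ${\bf G}(\mathbf{F}_{p_w^{a_w}})$ under the reduction map, the pro-$p_w$ radical being normal). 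This adds just one relation.

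Second, I would supply the bounded presentation of the $S$-arithmetic group that plays the role of Theorem~\ref{theorem::C}. Here one invokes the results of \cite{kn::GKKL1}, \cite{kn::GKKL2}, \cite{kn::GKKL3} (and \cite{kn::KL}) on bounded presentations of $S$-arithmetic groups of rank $\geqslant 2$: there is a constant, depending only on $k$ (and on the choice of a finite $S$), bounding $D+R$ for a presentation of ${\bf G}(\mathcal{O}_S)$ uniformly over all ${\bf G}$ of rank $\geqslant 2$. One can take $S$ to consist of the Archimedean places plus possibly one or two small non-Archimedean places to guarantee ${\rm rk}_S\geqslant 2$ and to keep within the scope of the quoted boundedness theorems; since the number of such choices is finite and depends only on $k$, the resulting constant $C_1=C_1(k)$ is legitimate. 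For the low-rank case $l=1$, ${\rm SL}_2(\mathcal{O}_v)$, either $S$ can be enlarged (still finitely, depending on $k$) so that ${\rm rk}_S\geqslant 2$ and the same machinery applies, or this case is excluded as in the positive-characteristic statements; I would follow whatever convention the paper adopts and, if needed, restrict to $l\geqslant 2$ and remark on $l=1$ separately.

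Putting the two halves together gives the theorem: ${\bf G}(\mathcal{O}_v)$, being a quotient of $\widehat{{\bf G}(\mathcal{O}_S)}$ by the normal closure of a single element, admits a profinite presentation with $D_{\Sigma(G)}$ generators and $R_{\Sigma(G)}+1$ relations, where $D_{\Sigma(G)}+R_{\Sigma(G)}\leqslant C_0(k)$ is the bound for ${\bf G}(\mathcal{O}_S)$; hence $D_{\Sigma(G)}+R_{\Sigma(G)}\leqslant C_0(k)+1=:C_1(k)$, uniformly in $v\in V$ and in ${\bf G}$.

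The main obstacle — and the reason this is genuinely weaker than Theorem~\ref{th::presentation_max_compact}, as the introduction stresses — is that the constant $C_1$ must depend on $k$. In characteristic $p$ every local field $\mathbf{F}_q(\!(t)\!)$ is a completion of the \emph{one} global field $\mathbf{F}_p(t)$, so a single global presentation controls all local ones; over number fields there is no single global field with all $p$-adic completions among its places, so the best one gets is uniformity within each fixed $k$. A secondary technical point is the choice of $S$: one needs ${\rm rk}_S({\bf G})\geqslant 2$ to apply both Theorem~\ref{th:CSP} and the bounded-presentation theorems, which for small $l$ forces $S$ to contain enough places; ensuring this can be done with $S$ depending only on $k$ (not on $v$ or ${\bf G}$) is the point that requires a little care but no real difficulty. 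I would also note, as the paper does in Remark~\ref{remark::34}, that no uniform statement across all $k$ can be expected.
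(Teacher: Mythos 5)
Your reduction from local to global is essentially the paper's: fix $k$, use strong approximation and the congruence subgroup property (Theorem~\ref{th:CSP}) to identify $\widehat{{\bf G}(\mathcal{O})}$ with $\prod_v {\bf G}(\mathcal{O}_v)$ up to a central cyclic kernel (one extra relation), split off the factor ${\bf G}(\mathcal{O}_v)$, and kill the complement $M$ with a single normal generator exactly as in Proposition~\ref{prop::AL}; the $l=1$ case is handled by adjoining a finite place to $S$ and treating the leftover compact analytic group ${\rm SL}_2(\mathcal{O}_{v_0})$ separately. All of this matches the paper.

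The gap is in your second step. You assert that \cite{kn::GKKL1}, \cite{kn::GKKL2}, \cite{kn::GKKL3} provide a bound, depending only on $k$, on $D+R$ for presentations of ${\bf G}(\mathcal{O}_S)$ \emph{uniformly over all Chevalley groups ${\bf G}$ of rank $\geqslant 2$}. No such result exists in those references: what they prove (besides bounded presentations of finite simple groups) is that ${\rm SL}_n(\mathbf{Z})$ is boundedly presented for $n$ large. Uniformity over the infinitely many types and ranks of ${\bf G}$, and over rings of integers $\mathcal{O}$ other than $\mathbf{Z}$, is precisely the content of Theorem~\ref{theorem::3.3}, which is the substantive new ingredient here and occupies all of Section~\ref{s - char 0}. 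Its proof is not a citation but an argument: one adapts the ${\rm SL}_n(\mathbf{Z})$ proof to get bounded presentations of the Steinberg groups ${\rm St}_{A_l}(\mathcal{O})$, covers an arbitrary Dynkin diagram of rank $\geqslant 6$ by subdiagrams of type $A_l$ or of bounded rank, glues via the Curtis--Tits/Steinberg amalgamation theorem over rings \cite{kn::A1}, and then descends from ${\rm St}_\Phi(\mathcal{O})$ to $G_\Phi(\mathcal{O})$ using the vanishing of ${\rm K}_1(\Phi,\mathcal{O})$ and the surjective stability of ${\rm K}_2$ to see that the central kernel is boundedly generated. Without supplying this (or an equivalent uniform global input), your argument proves the theorem only for each fixed ${\bf G}$ — which is immediate from finite presentability of arithmetic groups — and not the stated uniform bound over all ${\bf G}$.
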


\vskip 3mm

\begin{theorem}
\label{theorem::3.2}
With the notations as above, assume that  the following conditions hold:
\begin{enumerate}
         \item If $l=1$, then $p_v>2$.
         \item If $l=2$, then  $p_v>2$ if $\mathbf{G}$ is $B_2=C_2$ and that $p_v> 3$ if $\mathbf{G}=G_2$. 
         \item If $l\geqslant 3$, then one of the following conditions hold:
                 \begin{enumerate}
                 \item $p_v^{a_v}\geqslant 16$, or
                 \item if $p_v^{a_v}<16$, then $p_v>2$ if $\mathbf{G}$ is $B_l, C_l$ or $F_4$.
                 \end{enumerate}
\end{enumerate}         
   Then  $P$ is generated by at least $a_v l$ and at most $a_v(l+1)$ generators.
Moreover,  there exist absolute constants $C_2\geqslant 0$ and $C_3\geqslant 0$ such that:
\begin{itemize} 
\item the group $P$ has a presentation with $a_v(l+1)$ generators and at most $C_2a_v^2l^4$ relations;
\item any presentation of $P$ needs at least $C_3a_v^2l^2$ relations.
\end{itemize}
\end{theorem}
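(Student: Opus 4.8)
The plan is to prove Theorem \ref{theorem::3.2} by mimicking, as closely as the arithmetic over number fields allows, the strategy already carried out in characteristic $p$ in \S\ref{s - Sylow}, and then to isolate exactly those places where the ``one global field'' phenomenon is unavailable and a weaker (polynomial-in-$l^4$ rather than $l^2$) bound must be accepted. First I would set up the analogue of Lemma~\ref{key_lemma}: take a global field $k'$ (for instance $k$ itself, or $\mathbf{Q}$) with a place $w$ whose completion is $k_v$ and whose residue field is $\mathbf{F}_{p_v^{a_v}}$, let $A'$ be the ring of $\{w\}$-integers (or a suitable $S$-integer ring making the $S$-rank $\geqslant 2$, e.g. by adjoining one more non-Archimedean place), and set $\Gamma_0={\bf G}(A')$, $\Gamma=\Gamma_0\cap P$. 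Using the affirmative solution of the congruence subgroup problem (Theorem~\ref{th:CSP}) exactly as in Lemma~\ref{key_lemma}, the profinite completion of $\Gamma_0$ splits as a product of ${\bf G}(Y)$ over the completions $Y$, and since ${\bf G}(Y)$ has no nontrivial $p_v$-quotient for $Y\neq k_v$ (the simply connected Chevalley group over a $p'$-adic ring has no such quotient, and over the relevant completions the residue characteristic differs from $p_v$), the maximal pro-$p_v$ quotient of $\widehat{\Gamma}$ is precisely $P$. Hence $P=\Gamma_{\hat{p_v}}$ and any presentation of the discrete group $\Gamma$ yields a pro-$p_v$ presentation of $P$ with the same number of generators and relations.

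Next I would identify $\Gamma$ combinatorially. Here the analogue of Proposition~\ref{prop::presentation_of_Gamma} is the subtle point: in characteristic $p$ one used that ${\bf G}(\mathbf{F}_q[t,t^{-1}])$ is an affine Kac-Moody group and that $\Gamma=U_+$ is the full positive unipotent subgroup, which by \cite{kn::DM} is an amalgam of rank-$\leqslant 2$ pieces whose number is linear in $l$. Over a number ring no such Kac-Moody picture is available, so instead I would use the bounded-generation / Steinberg-type presentations of ${\bf G}(\mathcal{O}_v)$ together with the known presentations (with bounded data, from \cite{kn::GKKL1,kn::GKKL2,kn::GKKL3} or \cite{kn::Cap} and from \S\ref{ss - unipotent finite}) of finite groups of Lie type: $\Gamma$ is commensurable with, and up to the pro-$p_v$ completion indistinguishable from, the pro-$p_v$ Sylow of ${\bf G}(\mathcal{O}_v)$, and $\mathcal{O}_v$ is a complete DVR with residue field $\mathbf{F}_{p_v^{a_v}}$, so its structure is governed by the residue field data plus ramification. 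For the generator count, the lower bound $a_v l$ and the upper bound $a_v(l+1)$ come from analysing $P/[P,P]P^{p_v}$ via the action on the local Bruhat--Tits building exactly as in the last paragraph before Proposition~\ref{prop::presentation_of_Gamma}: each simple root group of the local (affine) building contributes an $\mathbf{F}_{p_v}^{a_v}$ factor, giving at most $a_v(l+1)$; the drop to possibly $a_v l$ reflects that the ``extra'' affine node need not contribute when $k_v$ is too small or ramified.

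For the upper bound on relations I would use Hall's lemma (Lemma~\ref{lemma::Hall}) and the rank-$2$ building blocks as in \S\ref{ss - unipotent finite}: $P$ is built from $O(l)$ root groups, each presented on $a_v$ generators with $O(a_v^2)$ relations, plus commutation/Steinberg relations between pairs of root groups, of which there are $O(l^2)$ pairs and each contributes $O(a_v^2)$ relations — this already gives $O(a_v^2 l^2)$; the weaker $C_2 a_v^2 l^4$ is a safe overestimate absorbing the less efficient Reidemeister--Schreier and Hall-lemma bookkeeping needed when one cannot appeal to the clean amalgam decomposition. For the lower bound $C_3 a_v^2 l^2 \leqslant r(P)$, I would invoke Golod--Shafarevich exactly as in Proposition~\ref{GS}: $P$ is commensurable with an $\mathcal{O}_v$-standard group in the sense of \cite{kn::LS}, hence the dimensions $r_n=\dim_{\mathbf{F}_{p_v}}(\Delta^n/\Delta^{n+1})$ grow subexponentially, which forces $r(P)\geqslant d(P)^2/4\geqslant (a_v l)^2/4$, so $C_3=1/4$ works. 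The main obstacle, and the reason the theorem is stated with $l^4$ rather than $l^2$, is precisely the absence of the Kac-Moody amalgam theorem \cite{kn::DM} over number rings: without it one does not get the economical presentation with $O(l^2)$ relations that the characteristic-$p$ argument enjoys, and one must instead bound $r(P)$ through coarser general machinery, losing a factor of $l^2$ between the lower and upper bounds.
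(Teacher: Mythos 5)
Your overall strategy --- realize $P$ as the pro-$p_v$ completion of an $S$-arithmetic group $\Gamma$ and then present $\Gamma$ --- is not what the paper does for Theorem~\ref{theorem::3.2}, and as you half-concede yourself, it cannot be carried through: the entire characteristic-$0$ difficulty is that no analogue of the Kac--Moody amalgam presentation of $\Gamma=U_+$ is available, so after establishing $P=\Gamma_{\hat{p}_v}$ you are left with nothing concrete to present. The paper works instead directly with the congruence filtration of ${\bf G}(\mathcal{O}_v)$: it writes $P$ as an extension of the Sylow $p_v$-subgroup $U$ of ${\bf G}(\mathbf{F}_{p_v^{a_v}})$ by the first congruence subgroup $N$, a powerful uniform pro-$p_v$ group with $N/\Phi(N)\cong{\rm Lie}({\bf G})(\mathbf{F}_{p_v^{a_v}})$ carrying the adjoint action \cite{Weisfeiler}. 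This is where your first genuine gap lies: your argument for $d(P)\leqslant a_v(l+1)$ via the action on adjacent alcoves is not an upper bound at all --- exhibiting a quotient of $P$ only bounds $d(P)$ from \emph{below}, and in mixed characteristic that quotient need not have order $p_v^{a_v(l+1)}$ (indeed the authors believe $d(P)=a_vl$ in general, Remark~\ref{remark::34}). The missing idea is Lemma~\ref{lemma::splitting}: one has $d(P)=d\bigl(P/\Phi(N)\bigr)$, and $P/\Phi(N)$ is an extension of ${\rm Lie}({\bf G})(\mathbf{F}_{p_v^{a_v}})$ by $U$ with the same action as the corresponding extension for the Sylow $\widetilde{P}$ of ${\bf G}(\mathbf{F}_{p_v^{a_v}}[[t]])$, which \emph{splits}; a split extension needs at least as many generators as any extension with the same kernel and action, so $d(P)\leqslant d(\widetilde{P})=a_v(l+1)$, the last equality being the characteristic-$p$ computation of Section 2 extended by \cite{kn::CR} to small $l$ under the stated hypotheses.

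Second, your relation count is both unjustified and miscalibrated. The claimed $O(a_v^2l^2)$ bound from ``$O(l)$ root groups and $O(l^2)$ pairs'' presupposes exactly the Steinberg/amalgam presentation of $P$ whose absence is the point; and the $l^4$ in the statement is not Hall-lemma slack but is forced by the actual argument: $N$ is powerful uniform on $a_v\dim({\bf G})$ generators, hence needs up to $C'\bigl(a_v\dim({\bf G})\bigr)^2$ relations by \cite[Theorem 4.35]{kn::DDMS}, and $\dim({\bf G})=O(l^2)$ yields $O(a_v^2l^4)$; this is glued via Lemma~\ref{lemma::Hall} to an $O(a_v^2l^2)$-relation presentation of $U$ (Proposition~\ref{prop::presentation_of_Gamma} for $l\geqslant 3$, \cite[Prop.~3.4.1]{kn::LSe} for $l\leqslant 2$). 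Your lower bounds --- $d(P)\geqslant d(U)=a_vl$ from the surjection $P\to U$, and $r(P)\geqslant d(P)^2/4$ by Golod--Shafarevich as in Proposition~\ref{GS} --- do agree with the paper. A smaller inaccuracy: your parenthetical assertion that the other completions have residue characteristic different from $p_v$ is false in general (a number field may have several places above $p_v$), so even the identification $P=\Gamma_{\hat{p}_v}$ would require the finer argument of Proposition~\ref{prop::AL} showing that ${\bf G}(\mathcal{O}_Y)$ has no nontrivial pro-$p_v$ quotient for such places $Y$.
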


\vskip 3mm
Theorem~\ref{theorem::3.1} can be deduced from the following result.
\vskip 3mm
\begin{theorem}
\label{theorem::3.3}
There exists a constant $C_4=C_4(k)$ such that $\Lambda={\bf G}(\mathcal{O})$ has a presentation $\Sigma_{\Lambda}$, with $D_{\Sigma_{\Lambda}}$ generators and $R_{\Sigma_{\Lambda}}$ relations, satisfying 
$$D_{\Sigma_{\Lambda}}+R_{\Sigma_{\Lambda}}\leqslant C_4.$$
\end{theorem}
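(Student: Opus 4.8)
The plan is to deduce Theorem~\ref{theorem::3.3} from a bounded presentation result for the \emph{global} arithmetic group $\Lambda={\bf G}(\mathcal{O})$, where ${\bf G}$ is a Chevalley scheme of Lie rank $\geqslant 2$, by combining a presentation of the universal Chevalley (Steinberg) group with the fact that the congruence kernel (which measures the difference between $\widehat{\Lambda}$ and the congruence completion $\bar\Lambda$) is controlled. First I would recall that for $l\geqslant 2$ the Steinberg group $\mathrm{St}({\bf G},\mathcal{O})$ admits a presentation by the Steinberg relations with a \emph{bounded} number of generators and relations, the bound depending only on the Lie type and on the number of ring generators of $\mathcal{O}$ over $\mathbf{Z}$ (finitely many, since $\mathcal{O}$ is finitely generated as a ring); this is where the dependence $C_4=C_4(k)$ enters, through the ring $\mathcal{O}$ of the fixed number field $k$. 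The natural surjection $\mathrm{St}({\bf G},\mathcal{O})\to{\bf G}(\mathcal{O})=\Lambda$ has kernel equal to $K_2({\bf G},\mathcal{O})$, which by the work of Bass--Milnor--Serre and its extensions is a \emph{finite} group (indeed it is a subquotient of the roots of unity $\mu(k)$). A finite normal subgroup can be killed by adding a bounded number of relations (one generator for its cyclic-by-bounded structure, or at worst $O(\log|K_2|)$ relations), so $\Lambda$ inherits a presentation with $D_{\Sigma_\Lambda}+R_{\Sigma_\Lambda}$ bounded in terms of $k$ and the Lie type.

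The remaining issue is uniformity over all Chevalley types ${\bf G}$ simultaneously: there are only finitely many Dynkin diagrams of each rank, but infinitely many ranks, so I must check the bound does not grow with the rank $l$. For the Steinberg presentation this is exactly the content of the bounded-generation results of \cite{kn::GKKL1}--\cite{kn::GKKL3} and \cite{kn::KL}: one uses that $\mathrm{St}_n(\mathcal{O})$ (and its classical analogues) has a presentation on a number of generators and relations \emph{independent of $n$}, built from a rank~$2$ (or rank~$3$) ``core'' together with a bounded number of gluing relations, exploiting that $\mathcal{O}$ has bounded ``generation width'' for its elementary groups (again a property of the fixed $\mathcal{O}$). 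I would cite the relevant theorem rather than reprove it. For the exceptional types $E_6, E_7, E_8, F_4, G_2$ there are only finitely many, so they contribute only to the constant.

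Then Theorem~\ref{theorem::3.1} follows from Theorem~\ref{theorem::3.3} by exactly the completion argument of Proposition~\ref{prop::AL} and Lemma~\ref{key_lemma}: by the congruence subgroup property (Theorem~\ref{th:CSP}), for $\mathrm{rk}_S\geqslant 2$ one has $\widehat{\Lambda}\cong\prod_{v\in V}{\bf G}(\mathcal{O}_v)$ (up to a finite central kernel when $S$ is archimedean, which can again be handled by a bounded number of extra relations), and for any fixed $v$ one writes $\widehat{\Lambda}={\bf G}(\mathcal{O}_v)\times M$ with $M$ a product of groups ${\bf G}(\mathcal{O}_w)$, each of which has no nontrivial abelian quotient and a unique maximal normal subgroup thanks to the semidirect decomposition ${\bf G}(\mathcal{O}_w)={\bf G}(\mathbf{F}_{p_w^{a_w}})\ltimes(\text{pro-}p_w)$; hence $M$ is the normal closure of a single element, and killing it costs one extra relation. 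This gives ${\bf G}(\mathcal{O}_v)$ a profinite presentation with $D_{\Sigma_\Lambda}$ generators and $R_{\Sigma_\Lambda}+1$ relations, uniformly in $v$, so $C_1(k)=C_4(k)+1$ works.

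The main obstacle is the global Steinberg presentation step: one needs a presentation of $\mathrm{St}({\bf G},\mathcal{O})$ whose size is bounded \emph{independently of the Lie rank} $l$, which is nontrivial when $\mathcal{O}$ is not a PID or not generated by a single element — here one must invoke the rather delicate bounded-generation and bounded-presentation machinery for $S$-arithmetic groups over rings of $S$-integers. The finiteness of $K_2({\bf G},\mathcal{O})$ and the CSP are classical inputs and pose no difficulty; the only mild care needed there is to ensure that killing the finite congruence/central kernels truly costs only $O_k(1)$ relations, which follows since those kernels are finite of order bounded by $|\mu(k)|$ and $|Z({\bf G})|$.
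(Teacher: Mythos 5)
Your overall strategy (present the Steinberg group $\mathrm{St}_\Phi(\mathcal{O})$ boundedly, then pass to $G_\Phi(\mathcal{O})$ by controlling the kernel $K_2(\Phi,\mathcal{O})$) is the same as the paper's, but the step you yourself identify as ``the main obstacle'' is exactly the heart of the proof, and you do not supply it. The references \cite{kn::GKKL1}--\cite{kn::GKKL3} give rank-independent presentations only in type $A$ (for ${\rm SL}_n(\mathbf{Z})$, and the paper notes the argument transfers verbatim to $\mathrm{St}_{A_l}(\mathcal{O})$); they do not cover types $B,C,D,E$. The paper bridges this gap by an explicit combinatorial step: every Dynkin diagram of rank $\geqslant 6$ is covered by three subdiagrams whose connected components are either of type $A_m$ or of bounded rank, with intersections of type $A_m$, $B_3$ or $C_3$, and then the Curtis--Tits theorem for Steinberg groups over rings (Dennis--Stein, Allcock) amalgamates the bounded presentations of the pieces. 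Your description of the mechanism (``a rank $2$ or rank $3$ core together with a bounded number of gluing relations'') is also not how rank-independence is achieved: a naive Curtis--Tits presentation has $O(l^2)$ relations, and the uniformity in $l$ comes from the bounded presentations of ${\rm Sym}(n)$ and ${\rm Alt}(n)$ used to compress the type-$A$ part. Without the covering-plus-amalgamation step, your argument only proves the type-$A$ case.

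The second genuine problem is your treatment of $K_2(\Phi,\mathcal{O})$. It is not a subquotient of $\mu(k)$ --- that describes the metaplectic (congruence) kernel, not $K_2$ of the ring of integers, whose finiteness is Garland's theorem and can be much larger than $|\mu(k)|$. Even granting finiteness, you would still need the stability theorems (Dennis, van der Kallen, Stein) to get a bound on the number of generators of $K_2(\Phi_l,\mathcal{O})$ that is uniform in $l$; you never invoke stability. The paper sidesteps all of this with a soft argument: stability reduces to rank $6$, where $K_2(\Phi_6,\mathcal{O})$ is finitely generated as a normal subgroup because $\mathrm{St}_{\Phi_6}(\mathcal{O})$ is boundedly presented while $G_{\Phi_6}(\mathcal{O})$ is finitely presented as an arithmetic group, and centrality upgrades this to finite generation as a group. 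Two smaller points: surjectivity of $\mathrm{St}_\Phi(\mathcal{O})\to G_\Phi(\mathcal{O})$ requires the vanishing of $K_1(\Phi,\mathcal{O})$ (Bass--Milnor--Serre, Matsumoto), which you use tacitly; and the final paragraph of your proposal proves Theorem~\ref{theorem::3.1} from Theorem~\ref{theorem::3.3}, which is a separate deduction in the paper and not part of the proof of the statement at hand.
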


Let us make the following remark.

\begin{remark}
\label{remark::34}
\begin{enumerate}
\item We believe that the lower bound in Theorem~\ref{theorem::3.2} above is sharp.
\item We do not know if the constant $C_1(k)$ in Theorem~\ref{theorem::3.1} really depends on $k$.
\item On the other hand, $C_4$ does depend on $k$, or at least on the degree $[k:\mathbf{Q}]$.
To see this, let $k_i$ be a sequence of number fields with $[k_i:\mathbf{Q}]=n_i\rightarrow\infty$ such that there exists a rational prime $p$ which splits completely in all the $k_i$'s.
Then by the Strong Approximation Theorem, the group ${\bf G}(\mathcal{O})$ is mapped onto $G(\mathbf{F}_p)^{n_i}$ and since $d({\bf G}(\mathbf{F}_p)^{n_i})$ grows logarithmically with $n_i$ \cite{kn::W}, $d({\bf G}(\mathcal{O}))$ grows to infinity with $[k:\mathbf{Q}]$.

Such a sequence of fields exists.
For example, denote by $\{p_i\}_{i\in\mathbf{N}}$ the sequence of odd primes congruent to $2=3^2$ modulo $7$ : 
$p_1=23$, $p_2=37$, etc. and then set $k_0=\mathbf{Q}$ and then $k_{i+1}=k_i(\sqrt{p_{i+1}})$. 
The polynomial $x^2-p_{i+1}$ is irreducible over  $k_i$, but reducible over $\mathbf{F}_7$. 
Hence, if $\mathcal{O}_i$ denotes the ring of integers of $k_i$, we should have $\mathcal{O}_i/7\mathcal{O}_i = (\mathbf{Z}/7\mathbf{Z})^{2^i}$.
\end{enumerate}
\end{remark}

Let us postpone the proof of Theorem~\ref{theorem::3.3} until  Section 4. Instead now we show how it implies Theorem~\ref{theorem::3.1}.
\vskip 5mm
\emph{Proof of Theorem~\ref{theorem::3.1}}.
Let us first assume that ${\bf G}$ is of rank $\geqslant 2$. 
Then the group ${\bf G}(\mathcal{O})$ has the  congruence subgroup property in the sense that $\widehat{{\bf G}(\mathcal{O})}\rightarrow {\bf G}(\widehat{\mathcal{O}})$
is onto with a cyclic kernel (see Theorem \ref{th:CSP}).
The presentation of ${\bf G}(\mathcal{O})$, promised in Theorem~\ref{theorem::3.3}, gives a profinite presentation of $\widehat{{\bf G}(\mathcal{O})}$
and so, with one additional relation used to kill the cyclic kernel if needed, it serves as a presentation of ${\bf G}(\widehat{\mathcal{O}})$.
As in the proof of Proposition 1.2, since ${\bf G}(\widehat{\mathcal{O}})={\bf G}(\mathcal{O}_v)\times M$ where $M$ is normally generated by one element, Theorem~\ref{theorem::3.1} is proved with $C_1\leqslant C_4+2$.

The remaining case is when ${\bf G}={\rm SL}_2$.
In this situation, we pick a prime $v_0$ of $\mathcal{O}$ and work with the arithmetic group ${\rm SL}_2(\mathcal{O}[{1 \over v_0}])$ which enjoys the congruence subgroup property: we can then use the surjective map with trivial kernel $\widehat{{\rm SL}_2(\mathcal{O}[{1 \over v_0}])} \rightarrow {\rm SL}_2(\widehat{\mathcal{O}[{1 \over v_0}]}) = \prod_{v \neq v_0} {\rm SL}_2(\mathcal{O}_v)$ as before, in order to see that the family $\{ {\bf G}(\mathcal{O}_v) \}_{v \neq v_0}$ is boundedly presented. 
At last, the single remaining group ${\rm SL}_2(\mathcal{O}_{v_0})$ does not cause any problem since it is analytic over $k_{v_0}$ and therefore finitely presented. 
\qed 
\vskip 5mm

In order to prove Theorem~\ref{theorem::3.2} we will need the following statement.

\begin{lemma}
\label{lemma::splitting}
Let G be a finite $p$-group that is an extension of $U$ by $N$
$$1\rightarrow N\rightarrow G\rightarrow U\rightarrow 1.$$
Suppose further that $N$ is abelian, the action of $U$ on $N$ is fixed,
and let $G_0$ be such an extension that splits. Then $d(G)\leqslant d(G_0)$. 
\end{lemma}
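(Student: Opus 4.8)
The plan is to compare $G$ and $G_0$ through their maximal elementary abelian $p$-quotients, using the fact that for a finite $p$-group $H$ one has $d(H) = \dim_{\mathbf{F}_p} H/\Phi(H)$, where $\Phi(H) = [H,H]H^p$ is the Frattini subgroup. So it suffices to show that the natural ``identity on generators'' map gives a surjection $G_0/\Phi(G_0) \twoheadrightarrow G/\Phi(G)$, or more precisely to construct a surjective homomorphism from a minimal-generator lift of $G_0$ onto $G$. The idea is that $G$ is a quotient of $G_0$: since $G_0$ is the split extension realizing the same action of $U$ on $N$, and $G$ is an arbitrary extension with that action, the non-split extension $G$ is obtained from $G_0 = N \rtimes U$ by ``twisting'' by a $2$-cocycle, and there is no homomorphism $G_0 \to G$ in general — so the direct approach needs care.

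The cleaner route is cohomological. First I would fix a set-theoretic section $s\colon U \to G$ and the corresponding $2$-cocycle $c \in Z^2(U,N)$ describing $G$; the split extension $G_0$ corresponds to the trivial cocycle. Now consider the pullback/fiber product construction: let $\widehat{U}$ be a minimal presentation-type cover, i.e. realize $U$ as $F/R$ for a free group $F$ on $d(U)$ generators. Both $G$ and $G_0$ are quotients of the semidirect product $F \ltimes N$ (with $F$ acting through $U$): indeed $F \ltimes N \twoheadrightarrow U \ltimes N = G_0$, and $F \ltimes N \twoheadrightarrow G$ as well, because any extension of $U$ by $N$ with the given action is a quotient of $F \ltimes N$ (lift the generators of $U$ arbitrarily to $G$, which is possible as $N$ is the kernel; this uses that $N$ is normal, not that it is abelian, for the surjectivity, though abelianness makes the module structure and the cocycle description available). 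Therefore $d(G) \le d(F \ltimes N)$ and $d(G_0) \le d(F\ltimes N)$ — but that gives the bound in the wrong direction, so this still is not quite it.

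The actual argument I would run: use that $G_0 = N \rtimes U$ contains a copy of $U$ (the complement), so the composite $U \hookrightarrow G_0 \twoheadrightarrow G_0/\Phi(G_0)$ together with $N \twoheadrightarrow N/(\text{stuff})$ shows $G_0/\Phi(G_0)$ surjects onto $U/\Phi(U) \oplus (N/N_0)$ where $N_0 = \langle [n,u]n^{-\text{action}} , N^p\rangle$ is the submodule generated by $N^p$ and all $u\cdot n - n$; in fact for the split extension one checks $\Phi(G_0) = \Phi(U)_{\text{image}} \cdot N_0$ and hence $d(G_0) = d(U) + \dim_{\mathbf{F}_p}(N/N_0)$, where crucially $N_0$ depends \emph{only} on the module structure $(U \curvearrowright N)$, not on the extension. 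For the arbitrary extension $G$, the same computation of $\Phi(G)$ shows that $\Phi(G) \supseteq N_0$ (the commutators $[s(u), n]$ and the $p$-th powers of elements of $N$ still lie in $\Phi(G)$ and still generate $N_0$, independently of the cocycle, since $N$ is abelian so the cocycle terms cancel in these particular commutators and powers computed inside $N$), and that $G/\langle N_0, \Phi(U)\text{-lifts}\rangle$ is a quotient of $U/\Phi(U) \oplus N/N_0$. Hence $d(G) = \dim G/\Phi(G) \le \dim \bigl(U/\Phi(U) \oplus N/N_0\bigr) = d(G_0)$, which is the claim.

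The main obstacle is the bookkeeping in the Frattini quotient of the non-split extension $G$: one must verify carefully that, even though $G$ carries a nontrivial cocycle, the Frattini subgroup $\Phi(G)$ still contains the module-theoretic submodule $N_0$ (so that modding out kills the ``$N$-part'' at least as much as in the split case), while the ``$U$-part'' of $G/\Phi(G)$ is in any case a quotient of $U/\Phi(U)$. The abelianness of $N$ is exactly what makes the $2$-cocycle disappear from the relevant commutator and power computations inside $N$. Once $\Phi(G) \supseteq N_0$ and the projection $G \to U$ induces $G/\Phi(G)N \twoheadleftarrow$ nothing larger than $U/\Phi(U)$ are established, counting dimensions finishes it; no estimate in the reverse direction or explicit presentation of $G$ is needed.
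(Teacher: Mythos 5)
Your final argument is correct and is essentially the paper's proof: both quotient by the extension-independent submodule $N_0=[N,G]N^p$ (the paper phrases this as factoring out $[N,H]$ and reducing to $N$ elementary abelian with trivial $U$-action), note that $\Phi(G)\supseteq N_0$ for every extension while the split case becomes a direct product with $d(G_0)=d(U)+\dim_{\mathbf{F}_p}(N/N_0)$, and finish by the dimension count on Frattini quotients. The two acknowledged false starts at the beginning are harmless but inessential.
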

\begin{proof}
 The minimal number of generators of a $p$-group $H$ is $\dim (H/[H,H]H^p)$.  
 We can therefore assume that $N$ is an elementary abelian $p$-group.  Moreover, if $H$ is either  $G$ or $G_0$,  we can factor out the subgroup  $[N,H]$, and so assume that the action of  $U$ on $N$ is trivial. 
      In the split case we are looking now at $N\times U$,  and so  the number of generators  is $d(G_0)=d(N)+d(U)$.  
      This is certainly an upper bound for the number of generators of  the non-split case.  
      The latter will indeed be smaller in general as can be seen easily in any non-abelian $p$-group with elementary abelian centre where the number of generators of such $H$  is strictly less than 
 $d(Z(H)) + d (H/Z(H))$  since $Z(H)$ has a non-trivial intersection with the Frattini subgroup $\Phi(H)$. 
\end{proof}

And now on with the proof of Theorem~\ref{theorem::3.2}.

\vskip 5mm

\emph{Proof of Theorem~\ref{theorem::3.2}}.
The group $P$ is an extension of the Sylow $p_v$-subgroup $U$ of ${\bf G}(\mathbf{F}_{p_v^{a_v}})$ by the first congruence subgroup $N:={\rm Ker}\bigl( {\bf G}(\mathcal{O}_v)\rightarrow {\bf G}(\mathbf{F}_{p_v^{a_v}}) \bigr)$ of ${\bf G}(\mathcal{O}_v)$.
This group $N$ is a uniform powerful group whose Frattini subgroup $\Phi(N)$ is equal to the second congruence subgroup of ${\bf G}(\mathcal{O}_v)$.
In fact $N/\Phi(N)$ can be identified with ${\rm Lie}({\bf G})(\mathbf{F}_{p_v^{a_v}})$ and ${\bf G}(\mathcal{O}_v)/N\cong{\bf G}(\mathbf{F}_{p_v^{a_v}})$ acts on $N/\Phi(N)$ by the natural adjoint  action of ${\bf G}(\mathbf{F}_{p_v^{a_v}})$ on its Lie algebra ${\rm Lie}({\bf G})(\mathbf{F}_{p_v^{a_v}})$ \cite[Lemma 5.2]{Weisfeiler}. 
It follows that $d(P)$ is equal to $d\bigl( P/\Phi(N) \bigr)$ and $P/\Phi(N)$ is an extension of ${\rm Lie}({\bf G})(\mathbf{F}_{p_v^{a_v}})$ by $U$. 

Now if $\widetilde{P}$ is the corresponding Sylow pro-$p$ subgroup in the group ${\bf G}(\mathbf{F}_{p_v^{a_v}}[[t]])$ over the characteristic $p$ ring $\mathbf{F}_{p_v^{a_v}}[[t]]$, then $\widetilde{P}$ is the split extension of $U$ with $\widetilde{N}$, where $\widetilde{N}$ is the first congruence subgroup there, i.e., 
$\widetilde{N}={\rm Ker}\bigl( {\bf G}(\mathbf{F}_{p_v^{a_v}}[[t]])\rightarrow {\bf G}(\mathbf{F}_{p_v^{a_v}}) \bigr)$.
Also there, the Frattini subgroup $\Phi(\widetilde{N})$ of $\widetilde{N}$ is the second congruence subgroup of ${\bf G}(\mathbf{F}_{p_v^{a_v}}[[t]])$ and we have 
$d(\widetilde{P})=d\bigl( \widetilde{P}/\Phi(\widetilde{N}) \bigr)$.
In this situation $\widetilde{P}/\Phi(\widetilde{N})$ is also an extension of ${\rm Lie}({\bf G})(\mathbf{F}_{p_v^{a_v}})$ by $U$ (via the adjoint action), but this time the extension splits.

We may therefore apply Lemma~\ref{lemma::splitting} to deduce that
$$d(P) = d\bigl( P/\Phi(N) \bigr) \leqslant d\bigl( \widetilde{P}/\Phi(\widetilde{N}) \bigr) = d(\widetilde{P}).$$

In \S 2, using the Kac-Moody methods,  we showed that $d(\widetilde{P})=a_v(l+1)$ for $l\geqslant 3$ and $p_v^{a_v}\geqslant 16$.
But in fact, as was shown in \cite[Corollary 2.5]{kn::CR}, this result is true for all $l$ as long as $p>2$ for $\mathbf{G}$ being $A_1$, $B_l, C_l$ or $F_4$ and $p>3$ for $\mathbf{G}=G_2$.
Hence,  under our hypotheses, $d(P)\leqslant a_v(l+1)$. 
On the other hand, as $P$ surjects onto $U$, we have $d(P)\geqslant d(U)=a_vl$. 

Observe that $N$ is a powerful uniform group on $a_v\cdot\dim({\bf G})$ generators and hence can be presented by $C'a_v^2\dim({\bf G})^2$ relations \cite[Theorem 4.35]{kn::DDMS}.
The group $U$ has a presentation with $a_vl$ generators and at most $C^{''}a_v^2l^2$ relations.
Indeed, $U$ is a Sylow $p$-subgroup of $ {\bf G}(\mathbf{F}_{p_v^{a_v}})$.
But the latter group is a special case of Kac-Moody group, namely a Chevalley group over a finite field.
Thus in particular, $U=U_+$ for this class of groups and so  if $l\geqslant 3$, Proposition~\ref{prop::presentation_of_Gamma}  holds for $U$ implying the desired estimate.
If $l\leqslant 2$, then $U$ is a group on  $la_v$ generators and of order $p_v^{a_vm}$ with $m\leqslant 6$ by \cite[Theorem 3.3.1]{kn::GLS3}.
Now \cite[Proposition 3.4.1]{kn::LSe} implies that $U$ has a presentation on those generators with at most $12a_v^2$ relations.
Thus for all $l$, $U$ has a presentation  with $a_vl$ generators and at most $C^{''}a_v^2l^2$ relations.
Now we may use P.~Hall's lemma to glue together the presentations of $N$ and $U$ to obtain a presentation of $P$. As a result we conclude that $P$ has a presentation with at most $a_v(l+1)$ generators and at most $C_3a_v^2l^4$ relations, as claimed. 

The lower bound on the number of relations now follows from the Golod-Shafarevich inequality \cite[Proposition 5.4]{kn::LS}.
\qed

\section{Characteristic $0$: the global case}
\label{s - char 0}

In this section, we  prove Theorem~\ref{theorem::3.3}.
The proof follows the line of \cite{kn::Cap} where Theorem 1.1 is proved.
Some results from $K$-theory are needed along the way.

\subsection{Definitions and facts from $K$-theory}
\label{ss - K-theory}

In what follows, $\Phi$ is a reduced, irreducible root system and $R$ a commutative unit ring.
We denote by ${\rm St}_\Phi(R)$ the Steinberg group of type $\Phi$ over $R$, 
which is a group defined by a presentation based on the root subgroups and commutator relation between them.

If $G_\Phi$ is the simply connected Chevalley group scheme of type $\Phi$, there is a natural group homomorphism ${\rm St}_\Phi(R) \to G_\Phi(R)$ whose image is precisely the subgroup $E_\Phi(R)$ generated by the root groups $U_a(R)$, $a \in \Phi$, with respect to the standard maximal split torus.
By the very definition of the low-degree $K$-groups of a root system and a ring, we have an exact sequence:

\smallskip

\centerline{$(\dagger) \quad 1 \to {\rm K}_2(\Phi,R) \to {\rm St}_\Phi(R) \to G_\Phi(R) \to {\rm K}_1(\Phi,R) \to 1,$}

\smallskip

\noindent where (the image of) ${\rm K}_2(\Phi,R)$ is central in ${\rm St}_\Phi(R)$. 
Roughly speaking, ${\rm K}_1(\Phi,R)$ measures the failure for $G_\Phi(R)$ to be generated by its unipotent elements.
When ${\rm K}_1(\Phi,R)$ is trivial, the group ${\rm St}_{\Phi}(R)$ is a central extension of the Chevalley group $G_\Phi(R)$.

\vskip 5mm
In the proof below, an important argument is the fact that the Steinberg version of a celebrated theorem of Curtis and Tits holds over rings.
This result was announced almost 40 years ago by R.K.~Dennis and M.R.~Stein \cite[Th. B]{kn::DS}.
Our reference is D.~Allcock's recent paper \cite{kn::A1} which contains a vast generalization of Curtis-Tits amalgamation theorem to Steinberg groups associated to some classes of  Kac-Moody groups. 

\begin{theorem}
Let $\Phi$ be a reduced irreducible root system of rank $l \geqslant 2$ 
with $\Pi=\{\alpha_1, ... , \alpha_l\}$ a system of simple roots in $\Phi$. 
For each pair of integers $\{ i , j \}$ with $1 \leqslant i,j \leqslant l$, denote by $\Phi_{ij}$ the subsystem of $\Phi$ spanned by $\alpha_i$ and $\alpha_j$.
Let $R$ be a commutative ring with $1$, and define $\Sigma(\Phi, R)$ to be the group
with generators $x_{\alpha}(t)$ for $\alpha\in\bigcup\Phi_{ij}$ and $t\in R$ subject to the relations:

\begin{enumerate}
\item[{\rm 1.}] $x_{\alpha}(s)x_{\alpha}(t)=x_{\alpha}(s+t)$, $\alpha\in\bigcup\Phi_{ij}$, $s,t\in R$,
\item[{\rm 2.}] $[x_{\alpha}(s), x_{\beta}(t)]=\prod x_{a\alpha+b\beta}(N_{\alpha\beta a b}s^at^b)$
\end{enumerate}

\noindent for all  $\alpha,\beta\in \Phi_{ij}$ for some $1 \leqslant i,j \leqslant l$ such that $\alpha+\beta\neq 0$, where the product and the integers $N_{\alpha\beta a b}$  are as in {\rm (R2)}~of {\rm \cite[3.7]{kn::St1}}.  
Then $\Sigma(\Phi, R)\cong {\rm {\rm St}_\Phi}(R)$.
\end{theorem}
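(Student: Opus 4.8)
The plan is to recognize this as the Steinberg-group version of the Curtis--Tits theorem over a commutative ring, and to derive it as a formal consequence of the corresponding statement for the ambient Kac--Moody Steinberg group proved by Allcock. First I would fix the presentation of $\mathrm{St}_\Phi(R)$ in Steinberg's original form: generators $x_\alpha(t)$ for \emph{all} $\alpha\in\Phi$ and $t\in R$, relations (R1) additivity in each root group and (R2) the Chevalley commutator formula for every pair $\alpha,\beta\in\Phi$ with $\alpha+\beta\neq 0$. The group $\Sigma(\Phi,R)$ uses only those generators with $\alpha$ lying in some rank-$2$ subsystem $\Phi_{ij}=\langle\alpha_i,\alpha_j\rangle$, and only those commutator relations internal to a single $\Phi_{ij}$. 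There is an obvious surjection $\phi\colon\Sigma(\Phi,R)\twoheadrightarrow\mathrm{St}_\Phi(R)$, since every defining relation of $\Sigma(\Phi,R)$ is a relation in $\mathrm{St}_\Phi(R)$; the content is injectivity, equivalently that no relations are lost by restricting to rank-$2$ subsystems.

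The key step is to invoke the Steinberg-theoretic Curtis--Tits amalgamation theorem in the generality established by Allcock \cite{kn::A1} (building on the announcement of Dennis--Stein \cite{kn::DS}): the Steinberg group $\mathrm{St}_\Phi(R)$ is the colimit (direct limit of the amalgam) of the subgroups $\mathrm{St}_{\Phi_{ij}}(R)$ for $1\le i<j\le l$, glued along the rank-$1$ pieces $\mathrm{St}_{\Phi_i}(R)=\mathrm{St}_{\{\alpha_i\}}(R)$, where $\Phi_i$ is the rank-$1$ subsystem attached to $\alpha_i$. Concretely, one checks that the rank-$2$ subsystems $\Phi_{ij}$ cover all of $\bigcup_{i<j}\Phi_{ij}$ and that every root of $\Phi$ occurring in any $\Phi_{ij}$ does occur, so that the generating sets match; then one observes that $\Sigma(\Phi,R)$ is by construction exactly the pushout/colimit of the system $\{\mathrm{St}_{\Phi_{ij}}(R)\}$ amalgamated over the $\{\mathrm{St}_{\Phi_i}(R)\}$ — indeed its generators are the union of the generators of the $\mathrm{St}_{\Phi_{ij}}(R)$ and its relations are exactly the relations of each $\mathrm{St}_{\Phi_{ij}}(R)$ (the rank-$2$ Chevalley formulas) plus the identifications of the shared rank-$1$ subgroups (automatic, since we use the same symbols $x_{\alpha_i}(t)$). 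Hence $\Sigma(\Phi,R)$ and $\mathrm{St}_\Phi(R)$ satisfy the same universal property with respect to this amalgam, so the canonical map $\phi$ is an isomorphism.

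A few technical points need care and would occupy the bulk of the write-up. First, one must make sure the presentation of each rank-$2$ factor $\mathrm{St}_{\Phi_{ij}}(R)$ used by Allcock agrees on the nose with the Steinberg presentation restricted to $\Phi_{ij}$, including the precise structure constants $N_{\alpha\beta ab}$ and the sign/ordering conventions in the product in relation~2; this is exactly the content of citing {\rm (R2)} of \cite[3.7]{kn::St1} verbatim. Second, one must handle the possibility that $\Phi_{ij}$ is of type $A_1\times A_1$ (when $\alpha_i$ and $\alpha_j$ are orthogonal), where the commutator relation degenerates to $[x_{\alpha_i}(s),x_{\alpha_j}(t)]=1$; this case is still covered by the amalgam statement and contributes no roots beyond $\pm\alpha_i,\pm\alpha_j$. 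Third, one should note the rank hypothesis $l\geqslant 2$ is what makes every simple root lie in at least one rank-$2$ subsystem, so that no generator of $\mathrm{St}_\Phi(R)$ is omitted; for $l=1$ the statement is vacuous or trivial.

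The main obstacle, as I see it, is purely bookkeeping rather than conceptual: verifying that the amalgamated-product description supplied by Allcock's theorem is phrased over an arbitrary commutative unital ring $R$ (not just fields or the Kac--Moody functorial setting) and that its gluing data coincides precisely with the relations listed in the statement of $\Sigma(\Phi,R)$. Once the dictionary between ``colimit of the rank-$\leqslant 2$ Steinberg subgroups'' and ``the group $\Sigma(\Phi,R)$'' is set up carefully, the isomorphism is immediate from the universal property, with no computation required. I would therefore structure the proof as: (i) recall Steinberg's presentation of $\mathrm{St}_\Phi(R)$; (ii) recall, in the form we need, the Curtis--Tits--Dennis--Stein amalgam theorem from \cite{kn::A1}; (iii) identify $\Sigma(\Phi,R)$ as the colimit of that amalgam by inspecting generators and relations; (iv) conclude $\Sigma(\Phi,R)\cong\mathrm{St}_\Phi(R)$.
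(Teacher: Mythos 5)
Your proposal is correct and follows essentially the same route as the paper, which gives no independent proof of this theorem but simply quotes it as the Steinberg-group form of the Curtis--Tits theorem announced by Dennis--Stein and established in Allcock's paper on Steinberg groups as amalgams. Your additional bookkeeping (matching the colimit of the rank-$2$ Steinberg subgroups with the explicit presentation $\Sigma(\Phi,R)$, and noting that roots of $\Phi$ outside $\bigcup\Phi_{ij}$ contribute no extra generators because they are absorbed by the commutator relations) is exactly the content delegated to those references.
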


\subsection{Proof of Theorem~\ref{theorem::3.3}}
\label{ss - proof}

The proof of Theorem~\ref{theorem::3.3} is now as follows.
\begin{description}
\item[Step I:] The groups $G_\Phi(\mathcal{O})$ are all arithmetic groups and hence are finitely presented \cite{RaghunathanFP}.
So  in particular  the groups $G_\Phi(\mathcal{O})$, for $\Phi$ of rank $l \leqslant 5$, are all presented by at most $C_4'(k)$ generators and relations. As there are only finitely many roots systems of bounded rank, we have to deal from now on only with those of degree at least $6$.

\item[Step II:] Recall that ${\rm Sym}(n)$  and ${\rm Alt}(n)$ have presentations with a bounded number of generators and relations independent of $n$ \cite{kn::GKKL1}.
This was used (together with the fact that ${\rm SL}_4(\mathbf{Z})$ is finitely presented) in \cite{kn::GKKL3} to show that the groups ${\rm SL}_n(\mathbf{Z})$, for $n\geqslant 6$,  are boundedly presented.
The proof works word to word to show that ${\rm St}_{A_l}(\mathcal{O})$ are boundedly presented for $l\geqslant 6$. 

%

\item[Step III:] We now show that every Dynkin diagram $\Phi$ of rank at least $6$ can be covered by three subdiagrams $\Phi_i$ (for $i=1,2,3$) such that:
\begin{enumerate}
\item  Each $\Phi_i$ has  at most two connected components,  and every connected component is either of type $A_l$ (for $l\geqslant 2$) or of rank at most $5$,
\item Every two nodes of $\Phi$ belong to at least one of the $\Phi_i$'s, and
\item Each intersection $\Phi_i\cap\Phi_j$ (for $i\neq j$) has at most two components, each either empty or of type $A_l$ (for $l\geqslant 2$), $B_3$ or $C_3$.
\end{enumerate} 

Let us list explicitly $\Phi_i$, $i=1,2,3$, in each particular case:
\begin{itemize}
\item For $B_l$, $l\geqslant 6$, take $\Phi_1=A_{l-1}$, $\Phi_2=B_3\sqcup A_{l-4}$ and $\Phi_3=\varnothing$,
\item For $C_l$, $l\geqslant 6$, take $\Phi_1=A_{l-1}$, $\Phi_2=C_3\sqcup A_{l-4}$ and $\Phi_3=\varnothing$,
\item For $D_l$, $l\geqslant 6$, take $\Phi_1=A_{l-1}$, $\Phi_2=A_{l-1}$ (different from $\Phi_1$) and $\Phi_3=A_3$ (the one containing the ``fork"),
\item For $E_l$, $l\in\{6, 7, 8\}$, take $\Phi_1=A_{l-1}$, $\Phi_2= A_4$  (the one containing  one of the end-nodes and the vertex missed by $\Phi_1$) and $\Phi_3=A_{l-2}$ (the one containing the other end-node and containing the vertex missed by $\Phi_1$).
\end{itemize}

\item[Step IV:] Given $\Phi=\bigcup_{i=1}^3\Phi_i$ as in the previous step, the Curtis-Tits Theorem guarantees that a presentation for ${\rm St}_\Phi(\mathcal{O})$ is obtained by taking the union of the presentation of ${\rm St}_{\Phi_i}(\mathcal{O})$ and gluing them along the intersections. As we have arranged that all the ${\rm St}_{\Phi_i}(\mathcal{O})$  as well as their intersections have bounded presentations, the same now applies to ${\rm St}_\Phi(\mathcal{O})$.

\item[Step V:]  We now use the stability results to deduce that by adding a bounded number of relations to the presentation of ${\rm St}_\Phi(\mathcal{O})$, we obtain a bounded presentation for $G_\Phi(\mathcal{O})$ as promised, where the bound depends on $\mathcal{O}$ only.

First of all notice that by the results of Mennicke, Bass-Milnor-Serre and Matsumoto (cf. Theorem 7.4 of \cite{kn::S}), the group $ {\rm K}_1(\Phi,\mathcal{O})$ is trivial  when  $l\geqslant 2$, and therefore definitely in our case.

Thus in view of $(\dagger)$ and the result  just obtained in Step IV, to finish our proof it remains to show that for all $\Phi$ of rank at least  $6$, the corresponding family of groups $\{K_2(\Phi, \mathcal{O})\}_\Phi$ is boundedly generated. 

To do that, let us recall  the stability results of Dennis, Van der Kallen and Stein that are summarised in \cite[Theorem 7.5]{kn::S} and in the example following it. Applied to our situation they imply  that for $l\geqslant 6$ and $\Phi_l=A_l, B_l, C_l$ or $D_l$, the maps $$K_2(\Phi_6, \mathcal{O})\rightarrow K_2(\Phi_l,\mathcal{O})$$ are surjective as well as for $l=6,7,8$, are the maps
$$K_2(A_6,\mathcal{O})\rightarrow K_2(E_l,\mathcal{O}).$$
It follows that $K_2(\Phi_l,\mathcal{O})$, $l\geqslant 6$, is boundedly generated provided we can show that $K_2(\Phi_6, \mathcal{O})$ are finitely generated. 
But this follows immediately as ${\rm St}_{\Phi_6}(\mathcal{O})$ is boundedly presented (as we showed in the previous step) while $G_{\Phi_6}(\mathcal{O})$ is finitely presented (as an arithmetic group):~
this implies that $K_2(\Phi_6, \mathcal{O})$ is finitely generated as a normal subgroup; being central, it is also finitely generated as a group. 
\end{description}

\begin{remark}
As pointed out above in Remark~\ref{remark::34}, the constant $C_4(k)$ in Theorem~\ref{theorem::3.3} does depend on $k$. In fact we observed there that there exist sequences of fields $k_i$ with $|k_i:\mathbf{Q}|\rightarrow \infty$ such that $C_4(k_i)\geqslant c_0\log|k_i:\mathbf{Q}|$ for some absolute constant $c_0$. One can carefully analyse the proof of Theorem~\ref{theorem::3.3} to deduce that $C_4(k)\leqslant C_0|k:\mathbf{Q}|^2$ (when $l\geqslant 6$).
\end{remark}

\bigskip

\noindent {\bf Acknowledgements.}~The authors would like to thank the anonymous referee for his/her detailed comments that helped to improve the paper.

The first author was supported by the Leverhulme Research Grant RPG-2014-106.
The second author wishes to acknowledge support by the ISF, the NSF, Dr.~Max R\"ossler, the Walter Haefner Foundation and the ETH Foundation. 
The third author would like to thank Amaury Thuillier for many useful discussions and was supported by the ANR GDSous/GSG project ANR-12-BS01-0003 and the Alexander von Humboldt Foundation.

\vspace{1cm}

{\it Inna Capdeboscq}

Mathematics Institute

University of Warwick

Zeeman Building Coventry CV4 7AL -- United Kingdom

{\tt  I.Capdeboscq@warwick.ac.uk}

\medskip

{\it Alexander Lubotzky}

Einstein institute of mathematics

Hebrew University

Givat Ram, Jerusalem 91904 -- Israel

{\tt alex.lubotzky@mail.huji.ac.il}

\medskip

{\it Bertrand R\'emy}

CMLS

\'Ecole polytechnique, CNRS

Universit\'e Paris-Saclay

91128 Palaiseau cedex -- France

{\tt bertrand.remy@polytechnique.edu}

\end{document}